\documentclass[11pt]{article}
\usepackage[margin=1in, a4paper]{geometry}
 \pdfoutput=1
\usepackage{amsthm, amsfonts,amsmath,amssymb,bbm, bbold}
\usepackage{mleftright}
\usepackage{verbatim}
\usepackage{textgreek}
\usepackage[]{authblk}
\usepackage{color}
\usepackage{hyperref}
\usepackage{ytableau}
\usepackage{natbib}

\usepackage{etoolbox}
\makeatletter
\let\ams@starttoc\@starttoc
\makeatother
\usepackage[parfill]{parskip}
\makeatletter
\let\@starttoc\ams@starttoc
\patchcmd{\@starttoc}{\makeatletter}{\makeatletter\parskip\z@}{}{}
\makeatother
\usepackage{mdframed}

\usepackage{enumerate}
\usepackage{pdfsync}
\usepackage{color}
\usepackage{float}

\usepackage[justification=centering]{caption}

\setcounter{secnumdepth}{3}%
\setcounter{tocdepth}{3}%

\numberwithin{equation}{section}

\newtheorem{theorem}{Theorem}[section]
\newtheorem{proposition}[theorem]{Proposition}
\newtheorem{lemma}[theorem]{Lemma}
\newtheorem{corollary}[theorem]{Corollary}

\theoremstyle{definition}

\newtheorem{remark}[theorem]{Remark}
\newcommand{\pmodsmall}[1]{(\mathrm{mod}\,#1)}

\let\cal\mathcal

\newcommand{\lb}{\mleft(}
\newcommand{\rb}{\mright)}
\newcommand{\lbb}{\mleft [}
\newcommand{\rbb}{\mright ]}
\newcommand{\labs}{\mleft |}
\newcommand{\rabs}{\mright |}
\newcommand{\lbrb}[1]{\lb #1 \rb}

\newcommand{\labsrabs}[1]{\labs#1\rabs}

\newcommand{\langlerangle}[1]{\langle#1\rangle}
\newcommand{\lbcurly}{\mleft\{}
\newcommand{\rbcurly}{\mright\}}
\newcommand{\lbcurlyrbcurly}[1]{\lbcurly#1\rbcurly}

\usepackage{mathrsfs} 
\newcommand{\MoreIt}[1]{%
  \tikz[baseline=(X.base)] \node[inner sep=0, xslant=0.3] (X) {$#1$};%
}
\newcommand{\Lat}{\ensuremath{\mathscr{L}\!\!\MoreIt{at}}}

\newcommand{\cA}{\mathcal{A}}
\newcommand{\cB}{\mathcal{B}}
\newcommand{\cH}{\mathcal{H}}
\newcommand{\one}{\boldsymbol{1}}

\newcommand{\simi}{\stackrel{\infty}{\sim}}

\newcommand{\dij}[2]{\delta_{#1,#2}}
\newcommand{\dijm}[3]{\delta_{#1,#2}^{(#3)}}

\newcommand{\Ebb}[1]{\Eb\lbb #1\rbb}

% limits and etc

%\newcommand{\liminfi}[1]{\varliminf\limits_{#1\to \infty}}

% \newcommand{\limsupo}[1]{\varlimsup\limits_{#1\to 0}}

%MathbbSymbols

\newcommand{\Eb}{\mathbb{E}}

\newcommand{\Rb}{\mathbb{R}}
\newcommand{\R}{\mathbb{R}}

\newcommand{\Qb}{\mathbb{Q}}

\newcommand{\Zb}{\mathbb{Z}}
\renewcommand{\P}{\mathbb{P}}

%derivative symbols

% Curly Symbols

%\newcommand{\Bc}{\mathcal{B}}
\newcommand{\Bc}{\mathcal{B}}

\newcommand{\Hc}{\mathcal{H}}

\newcommand{\Xc}{\mathcal{X}}

\newcommand{\supp}{{\rm{Supp}}}

%Mathtt

%Mathfrak

%Mathrm

%sigma-algebras

%functions
% \newcommand{\ind}[1]{\mathbb{I}_{\lbcurlyrbcurly{#1}}}
\newcommand{\ind}[1]{\mathbbm{1}_{\lbcurlyrbcurly{#1}}}

%integrals

%intervals

%functional spaces

%specific for the paper commands

\usepackage{multicol}
\usepackage{blindtext}
\usepackage{amsmath}

\usepackage{xfrac} % nice fractions
\usepackage{amsmath,amssymb,booktabs} % booktabs for nicer table rules
\usepackage{nicematrix}
\usepackage{tikz}
\usepackage{nicematrix}
\usepackage{booktabs}
\usepackage{eqparbox}

\newcommand{\eqinfo}[1]{\stackrel{
\eqmakebox[a][c]{\scriptsize #1}}{=}}
% Define \shadehalf

\DeclareMathOperator{\Span}{span}
\DeclareMathOperator{\lcm}{lcm}
\DeclareMathOperator{\rank}{rank}

\newcommand{\shadehalf}[3]{%
    \raisebox{0pt}[0pt][0pt]{%
        \begin{tikzpicture}[baseline=1, scale=0.8] 
            % Hardcoded coordinates
            \def\leng{0.65}    % Rectangle side length
            \def\xone{0}    % Rectangle x1
            \def\yone{-0.16}   % Rectangle y1
            % Use \pgfmathsetmacro for calculating \xtwo and \ytwo
            \pgfmathsetmacro{\xtwo}{\xone + \leng}    % Rectangle x2
            \pgfmathsetmacro{\ytwo}{\yone + \leng}    % Rectangle y2
            
            % Draw the shapes
            \fill[#1] (\xone,\yone) rectangle (\xtwo,\ytwo); % Full square with first color
            \fill[#2] (\xone,\yone) -- (\xtwo,\yone) -- (\xtwo,\ytwo) -- cycle; % Half-shading triangle
            
            % Centered text
            \node at ({(\xtwo+\xone)/2},{(\yone+\ytwo)*0.6}) {#3};
        \end{tikzpicture}%
    }
}
 \newcommand{\indep}{\perp\!\!\!\!\perp} 
\usepackage{authblk}

\DeclareMathOperator{\Cov}{Cov}
\DeclareMathOperator{\Var}{Var}

\makeatletter
\newcommand{\myitem}[1]{%
	\item[#1]\protected@edef\@currentlabel{#1}%
}
\makeatother
%\newcommand{\partialder}[3]{\partial^{#1}_{#2}{#3}}
 % no space, limits underneath in displays
% \renewcommand{\limsupo}[1]{\varlimsup\limits_{#1\to 0+}}

\usepackage{tikz}
\usetikzlibrary{decorations.markings}
\usetikzlibrary{patterns}
\usetikzlibrary{calc} % 

\DeclareMathSymbol{\mrq}{\mathord}{operators}{`'}
\usepackage{wrapfig}

\allowbreak

%other symbols

\renewcommand{\Im}{\mathtt{Im}}

%\color

\let\cal\mathcal

\begin{document}

        \author[1,2]{Stefan Gerdjikov}
    \author[,1]{Martin Minchev\thanks{
			Email: mjminchev@fmi.uni-sofia.bg.}}
	\author[1,3]{Mladen Savov}
	
	\affil[1]{Faculty of Mathematics and Informatics, Sofia University "St. Kliment Ohridski", 5,
		James Bourchier blvd., 1164 Sofia, Bulgaria}
        
        	\affil[2]{Institute for Information and Communication Technologies, Bulgarian Academy of Sciences, Akad.  Georgi Bonchev str., Block 2, Sofia  1113, Bulgaria}
	
	\affil[3]{Institute of Mathematics and Informatics,  Bulgarian Academy of Sciences, Akad.  Georgi Bonchev str., 
		Block 8, Sofia 1113, Bulgaria}
	\title{On the probability of $n$
    equidistant points in high-dimensional lattices}
	\maketitle

	\begin{abstract}
Consider $n$ $d$-dimensional vectors with
iid entries from a lattice distribution $X$. We show that the probability that all distances between them are equal is
asymptotically
\[
C_n\cdot\frac{1}{d^{(m-1)/2}}
\quad
\text{for}
\quad
d \to \infty
\quad
\text{and}
\quad
m = \binom{n}{2},
\]
with an explicit constant in terms of the first 4 moments of $X$. Moreover, we generalise this result to encompass all 
 finitely supported $X$, as well as 
 under different distances.
Our method relies on the
relatively rarely used multidimensional local limit theorem and an analysis of the lattice on $\Zb^{\binom{n}{2}}$
spanned by the image of the \emph{overlapping} map
\[
H : (v_1, \dots, v_n) \in \{0,1\}^n \mapsto 
\lbrb{\ind{v_i \neq v_j}}_{1 \leq i < j\leq n}
\in \{0,1\}^{\binom{n}{2}}.
\]

	\end{abstract}

	\textbf{Keywords: Multidimensional local limit theorem, Discrete lattice, Line graphs, Spectral graph theory} 
	
	\textbf{MSC2020 Classification: Primary: 60F05, 60G50; Secondary: 05C50, 11H06}
	
	\section{Introduction}\label{sec:intro}
    
    Let $\boldsymbol{X}_1,  \dots, \boldsymbol{X}_n$ be $d$-dimensional vectors
 whose entries are iid
 samples from a random variable $X$
 with finite lattice support. Let 
$p_d$ be the probability that these vectors are equidistant with respect to the Euclidean distance.
Then,  if we denote by  \[
m = \binom{n}{2},
\quad
\text{and}
\quad
C_1 := \Var\lbrb{\lbrb{X- \Ebb{X}}^2},
\]
in Theorem \ref{thm: general lattice}, we show  that as $d \to \infty$,
\[
 p_d \simi
\frac{1}{d^{(m-1)/2}}\cdot
\Big(
m(2\pi)^{m-1}(\Var X)^{2(m-n)}(4(\Var X)^2+(n-2)C_1)^{n-1}
\Big)^{-1/2}
\]
The method is not entirely specific to the choice of distance, as discussed in Remark \ref{rem: metric}. Furthermore,
we generalise the result to all finitely supported $X$ in Theorem \ref{thm: general}.

 Let us briefly present the case of $n=3$ vectors and $X\sim Ber(1/2)$ to illustrate some of the techniques
 and outline potential challenges for the general case. Denoting by $d_H$ the Hamming (which coincides with the Euclidean) distance between binary vectors, we are interested in the probability of
 the event
 \[
 d_H\lbrb{\boldsymbol{X}_{\! 1}, \boldsymbol{X}_{\! 2}} = 
  d_H\lbrb{\boldsymbol{X}_{\! 1}, \boldsymbol{X}_{\! 3}} =
   d_H\lbrb{\boldsymbol{X}_{\! 2}, \boldsymbol{X}_{\! 3}}.
 \]
 Writing this as a system of two equations
 \begin{equation}\label{eq: systems}
  \begin{cases}
 d_H\lbrb{\boldsymbol{X}_{\! 1}, \boldsymbol{X}_{\! 3}}=
  d_H\lbrb{\boldsymbol{X}_{\! 1}, \boldsymbol{X}_{\! 2}}   \\
  d_H\lbrb{\boldsymbol{X}_{\! 2}, \boldsymbol{X}_{\! 3}} =   d_H\lbrb{\boldsymbol{X}_{\! 1}, \boldsymbol{X}_{\! 2}}
\end{cases}
\text{or equivalently}
\quad
\begin{cases}
    \sum_{\ell = 1}^d
    \lbrb{ X_1^{(\ell)} - X_3^{\!(\ell)}}^2 - \lbrb{ X_1^{(\ell)} - X_2^{\!(\ell)}}^2 =0 \\
 \sum_{\ell = 1}^d
    \lbrb{ X_2^{(\ell)} - X_3^{\!(\ell)}}^2 - \lbrb{ X_1^{(\ell)} - X_2^{\!(\ell)}}^2 =0
\end{cases}
\end{equation}
we see that the last system can be written in vector form as
\[
\sum_{
\ell=1}^d \boldsymbol{V}_{\!\ell} = \boldsymbol{0}
\]
with iid vectors $\boldsymbol{V}_{\!i}\sim \boldsymbol{V}$. The last representation suggests that the asymptotics of the respective probability
can be obtained by applying
a multidimensional local limit theorem.
Such type of results were first obtained in the 1950s, e.g. in
\cite{Meizler-1949}, and large deviations
results further available in the works of 
\cite{Richter-1958,Steinebach-1978, Ellis-1984, Chaganty-1986}. In our context, we must take into account the lattice nature of $X$, and using a result of 
\cite{Krafft-1967}, see \eqref{thm: Krafft}, we have that
\begin{equation}
    \label{eq: toy}
\P \lbrb{ \sum_{\ell=1}^d \boldsymbol{V}_{\!\ell} = \mathbf{0} }
\simi 
\frac{\labsrabs{ \Lat \boldsymbol{V}}}{2\pi d\sqrt{\labsrabs{\Var(\boldsymbol{V})}}},
\end{equation}
with $|\Var(\boldsymbol{V})|$
being the determinant of the covariance matrix of the vector $\boldsymbol{V}$, and $|\Lat \boldsymbol{V}|$ the minimal non-zero volume of a parallelepiped from the lattice spanned by $\supp \boldsymbol{V}$ (also known as the \emph{fundamental volume}), see Section \ref{sec: Lattice} for a more formal definition.
In our toy example, observe that if   $(X_1,X_2,X_3) = (1,0,1)$, then $\boldsymbol{V}=
(-1, 0)$, and if $(X_1,X_2,X_3) = (1,0,0)$, $\boldsymbol{V} = (0,-1)$, so 
$\Lat \boldsymbol{V} \equiv \Zb^2$, and $|\Lat \boldsymbol{V} |= 1$. 

However, if we consider more than three vectors, or $X$ taking values from a  different set than $\{0,1\}$, the structure of  $\Lat \boldsymbol{V}$ becomes non-trivial but surprisingly universal. To address this problem, in Section \ref{sec: Lattice} we study the lattice on
$\Zb^n$ spanned by the image of the \emph{overlapping} map
\begin{equation}\label{def: overlap}
H : (v_1, \dots, v_n) \in \{0,1\}^n \mapsto 
\lbrb{\ind{v_i \neq v_j}}_{1 \leq i < j\leq n}
\in \{0,1\}^{\binom{n}{2}}.
\end{equation}
We provide a basis of $\Lat H :=\Lat \Im H$ in
Theorem \ref{thm: lattice}, and use the arguments made to construct also a basis of $\Lat\boldsymbol{V}$
in Lemma \ref{lem: H^ast}.
It turns out that the basis
built for the binary case is in fact the
same modulo scaling for all lattice $X$, which is described in detail in the proof of Theorem \ref{thm: general lattice}. 

Furthermore, consider what happens, for example, if $\supp X = \{0, 1,\sqrt{2}\}$. Then the linear span over $\Zb$ of $\supp X$ is dense in $\Rb$, and
\emph{a fortiori} $|\Lat \boldsymbol{V}| = 0$. In this case, we embed the problem in a higher dimensional space by picking a suitable Hamel basis of this span over $\Qb$. The analysis of $\Lat \boldsymbol{V}$ in this case is done in
Lemma \ref{lem: lattice_r}.

The second problem in determining the exact 
asymptotics lies in calculating $|\Var(\boldsymbol{V})|$. In the toy example, we can do this by hand, but in the general case, we need to
understand in depth the nature of the apparent overlap in the structure of systems of equations that we construct in the style of \eqref{eq: systems}. 
This is in fact related to the overlapping map $H$: denoting by $e_i$ the standard basis of $\R^n$, let us consider the $m:=\binom{n}{2}$ vectors,
for $i<j$,
\begin{equation}
    \label{def: cH}
\cH_{i,j} := H(e_i + e_j).
\end{equation}
One such vector $\cH_{i,j}$ has 1s
at positions $(a,b)$ such that
$\{a,b\}$ and $\{i,j\}$ have exactly 1 element in common, and it can be expected to have a similar nature to elements of the type
\[
\Cov\lbrb{(X_i-X_j)^2 - (X_1-X_2)^2,
(X_i-X_k)^2 - (X_1-X_2)^2}.
\]
Let us order $\cH_{i,j}$ in an $m\times m$ matrix $\cH$, which is indexed by pairs $(a,b)$ ordered in the usual lexicographic order.
This matrix is a known object in graph theory: consider a graph $G(V,E)$. Then, its \emph{line graph}
$L(G)$ is the graph with vertex set $E$, and such that two of its
vertices are connected if
the respective edges in $G$ have 
a point in common. With this notion, 
the matrix $\cH$ 
is the adjacency matrix of the {line graph} of the
complete graph. 
In order to calculate
explicitly $|\Var(\boldsymbol{V})|$,
in Theorems \ref{thm: general lattice} and \ref{thm: general}, we use the understanding of the eigenstructure of $\cH$ as outlined in Proposition \ref{prop: line Kn}. Related results may be found for example in \cite[Chapter 6]{Godsil-Meagher-2016}, but in the context of
general Johnson graphs, and thus being  relatively implicit for our needs.

To link the problem to previous research, we note that in the binary case, or more generally 
if $\supp X = \{0,1,2, \dots, q-1\}$, the problem is equivalent to finding the asymptotic density of equidistant codes (or for $q=2$, equivalently equidistant families of sets) with length $d$ and cardinality $n$, for which there is no explicit formula. In the binary case, \cite{Ionin-1995} show that equidistant codes are in bijection with certain types of Hadamard matrices, and characterise the maximal possible distance between two words. Also, from a purely combinatorial point, \cite{Hudelson-1996} studies maximum-volume $j$-dimensional simplices (and thereby certain large equidistant sets) in 
$\{0,1\}^d$, relates them to Hadamard matrices, and includes partial enumeration and bounding results in special cases.

    \section{The lattice spanned by the overlapping map \texorpdfstring{$H$}{H}
    %Lattices on $\R^d$
    }\label{sec: Lattice}

In order to apply the multidimensional local limit theorem, 
we need to understand the lattice spanned by the respective vectors. We briefly present key notions and refer to
\cite[Chapter 3.1]{Tao-Vu-2006} or the notes \cite[Chapter 2]{Paffenholz-2022}
for a more
detailed overview.

We will call a  \emph{lattice} in $\R^d$ an additive subgroup $\Lambda \subseteq \R^d$, whose points are isolated. This is also known as
a \emph{discrete} lattice,
and we call the dimension of its linear span the \emph{rank} of the lattice. It is true that if $A = \{a_1, \dots, a_n\} \subseteq \Qb^d$ is a set of  vectors,  then
\[
\Lat A := \lbcurlyrbcurly{\sum_{i=1}^m \lambda_i a_i : 
\lambda_i \in \Zb, 1 \leq i \leq m}
\]
is a lattice, which we call \emph{generated} or also
\emph{induced} by $A$.  Because the problem of equidistant vectors is invariant with respect to translation and dilation of the constituting random variables, we will work hereafter with lattices of integer points.

Similarly to the usual notion, we call a set $B$ a basis of $\Lat A$ if it generates $\Lat A$
and is composed of linearly independent vectors.
All bases $B$ of a lattice are equivalent modulo a unimodular transformation,
and in particular $\det B$ does not depend on the particular choice of $B$ but only on the lattice, see \cite[Corollary 2.17]{Paffenholz-2022}.
The quantity $\det B$ is known as the
\emph{fundamental volume} of $\Lat A$,
and we will denote it by $|\Lat A|$.

Recall the overlapping map 
\[
H : (v_1, \dots, v_n) \in \{0,1\}^n \mapsto 
\lbrb{\ind{v_i \neq v_j}}_{1 \leq i < j\leq n}
\in \{0,1\}^{\binom{n}{2}}.
\]
With a slight abuse of notation, using the standard notion of a characteristic vector, we can also consider that $H$ takes as input subsets $I$ of $\{1,\dots,n\}$.

 To describe the values of $H$, we will work with the standard basis
of $\Rb^{\binom{n}{2}}$, $e_1, \dots, e_{\binom{n}{2}}$, however, as the natural enumeration in our context is by pairs, we define $e_{i,j} := e_{i + \sum_{k=1}^{j-i-1}n-k}$, which is the usual diagonal order ($\searrow$). Therefore, it is convenient to order the values of $H$
in a table and not as a usual vector in a line.  Let us  define \emph{integer intervals} as
sets 
\[[i:j] :=\{k \in \Zb: i \leq k \leq j\}.
\]
The proposed triangular representation is illustrated in Figure \ref{fig: H123} on the intervals $[1:1], 
\dots, [1:n-1]$.
To simplify the exposition, \textbf{we will henceforth assume that $i<j$}.

\begin{mdframed}[linecolor=black, linewidth=0.5pt, roundcorner=5pt]
    \begin{figure}[H]
    \centering
    \scalebox{1}{%
    \begin{tabular}{r@{}l}
    % Upper triangular matrix with shading
    \ytableausetup {mathmode, boxframe=0.1em, boxsize=1.2em}  % Center the tableau

    \begin{ytableau}
    %index
    \none & \none[\scriptstyle 2] & \none[\scriptstyle 3] & \none[\scriptstyle 4] & \none[\scalebox{0.75}{\dots}] & \none[\scriptstyle n\scalebox{0.5}{$-$}1] & \none[\scriptstyle n] \\
       %line 1
    \none[\scriptstyle 1] & *(green)1 & *(green)1 & *(green)1 & \none & *(green)1 & *(green)1 \\
       %line 2  
    \none[\scriptstyle 2] & \none &  &  & \none[\scalebox{0.75}{\dots}] & &  \\
       %line 3  
    \none[\scriptstyle 3] & \none & \none &  & \none &  &  \\
       %line 4 
    \none[\scalebox{0.75}{\vdots}] & \none & \none & \none & \none[{\ddots}] & \none \\
       %line 5
    \none[\scriptstyle n-2] & \none & \none & \none & \none &  &  \\
       %line 6
    \none[\scriptstyle n\scalebox{0.5}{$-$}1]  & \none & \none & \none & \none & \none &  \\        
    \end{ytableau}

    \hspace{0.5em}

    \begin{ytableau} 
    \none[\scriptstyle 2] & \none[\scriptstyle 3] & \none[\scriptstyle 4] & \none[\scalebox{0.75}{\dots}] & \none[\scriptstyle n\scalebox{0.5}{$-$}1]  & \none[\scriptstyle n] \\
       %line 1
    & *(green)1 & *(green)1 & \none & *(green)1 & *(green)1 \\
       %line 2  
    \none & *(green)1 & *(green)1 & \none[\scalebox{0.75}{\dots}] & *(green)1 & *(green)1 \\
       %line 3  
    \none & \none &  & \none &  &  \\
       %line 4 
    \none & \none & \none & \none[{\ddots}] & \none \\
       %line 5
    \none& \none & \none & \none &  &  \\
       %line 6
    \none& \none & \none & \none & \none &  \\
    \end{ytableau}

    \hspace{0.5em}

    \begin{ytableau}
    %index
    \none[\scriptstyle 2] & \none[\scriptstyle 3] & \none[\scriptstyle 4] & \none[\scalebox{0.75}{\dots}] & \none[\scriptstyle n\scalebox{0.5}{$-$}1]  & \none[\scriptstyle n] \\
       %line 1
    & &  & \none & *(green)1 & *(green)1 \\
       %line 2  
    \none & & & \none[\scalebox{0.75}{\dots}]& *(green)1 & *(green)1 \\
       %line 3  
    \none & \none &  & \none & *(green)1 & *(green)1 \\
       %line 4 
    \none & \none & \none & \none[{\ddots}] & \none \\
       %line 5
     \none& \none & \none & \none & *(green)1 &*(green)1  \\
       %line 6
    \none& \none & \none & \none & \none &  \\        
    \end{ytableau}

    \hspace{0.5em}

    \begin{ytableau}
    %index
    \none[\scriptstyle 2] & \none[\scriptstyle 3] & \none[\scriptstyle 4] & \none[\scalebox{0.75}{\dots}] & \none[\scriptstyle n\scalebox{0.5}{$-$}1] & \none[\scriptstyle n] \\
       %line 1
    & & & \none & & *(green)1 \\
       %line 2  
    \none & & & \none[\scalebox{0.75}{\dots}] & & *(green)1 \\
       %line 3  
    \none & \none & & \none & & *(green)1 \\
       %line 4 
    \none & \none & \none & \none[{\ddots}] & \none \\
       %line 5
    \none& \none & \none & \none & & *(green)1 \\
       %line 6
    \none& \none & \none & \none & \none & *(green)1 \\        
    \end{ytableau}
    \end{tabular}%
    } % Close scalebox
    \caption{The vectors $H([1:1]), H([1:2]), H([1:n-2])$ and $H([1:n-1])$ with empty cells representing $0$s. We recall that we put a one in $H(I)$ for every $(i,j)$ such that $i \in I, j\notin I$ or vice versa.}
    \label{fig: H123}
    \end{figure}
        
\end{mdframed}

\begin{theorem}\label{thm: lattice}
    Let $H$ be the map from \eqref{def: overlap} or alternatively defined for a set
     $I\subset [1:n]$ by
    \begin{equation*}
    H(I)=\sum_{i\in I, j\notin I} e_{i,j}.
    \end{equation*}
    Then the vectors 
    \begin{itemize}
        \item $H([1:1]), H([1:2]), \dots, H([1:n-1])$
        \item alongside all of the form $2e_{i,j}$ with  $j<n$
    \end{itemize}
    form a basis of the lattice induced by $\Im H$, that is $\Lat H := \Lat \Im H$.
        
         In addition, each vector of type $2e_{i,j}$  from the basis above  can be substituted with the vector $2\cdot\one = (2, \dots, 2)$.
          
\end{theorem}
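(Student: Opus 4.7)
The plan is to verify that the proposed $\binom{n}{2}$ vectors lie in $\Lat H$, that they generate it, and that they are linearly independent; since $n-1+\binom{n-1}{2}=\binom{n}{2}$ matches the ambient rank, this makes them a $\Zb$-basis. The key structural fact is that $H$ is $\mathbb{F}_2$-linear: writing $\chi_I$ for the characteristic vector of $I\subseteq[1:n]$, $H(I)_{i,j}\equiv \chi_I(i)+\chi_I(j)\pmod{2}$, so $H(I\triangle J)\equiv H(I)+H(J)\pmod{2}$; in particular $H(I)=H(I^c)$.

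That $2e_{i,j}\in\Lat H$ is immediate from the identity $H(\{i\})+H(\{j\})-H(\{i,j\})=2e_{i,j}$: the two sums add to $2$ at position $(i,j)$ and to $1$ at each $(i,k),(j,k)$ with $k\notin\{i,j\}$, while $H(\{i,j\})$ carries exactly those latter $1$s. To express an arbitrary $H(I)$ in the claimed basis, use $H(I)=H(I^c)$ to assume $n\notin I$ and set
\[
c_k := \indset{k \in I} - \indset{k+1 \in I}, \qquad k = 1, \dots, n-1, \qquad \indset{n \in I}:=0.
\]
Since $H([1:k])_{i,j}=\indset{i\leq k<j}$, the telescoping identity $\sum_{k=i}^{j-1}c_k=\indset{i \in I}-\indset{j \in I}$ yields
\[
\Big(H(I)-\sum_{k} c_k\, H([1:k])\Big)_{i,j} = \indset{|\{i,j\}\cap I|=1} - \indset{i \in I} + \indset{j \in I}.
\]
A four-case check shows this is $0$ when $j=n$ (using $n\notin I$) and lies in $\{0,2\}$ for $j<n$; concretely the residue equals $\sum_{i<j<n,\,i\notin I,\,j\in I} 2e_{i,j}$, placing $H(I)$ in the $\Zb$-span of the proposed family.

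Linear independence is obtained by projecting onto the ``last column'' coordinates $(i,n)$, $i<n$: every $2e_{i,j}$ with $j<n$ vanishes there, while $H([1:k])$ projects to the staircase vector $(\underbrace{1,\dots,1}_{k},0,\dots,0)$, giving $n-1$ independent rows; on the complementary coordinates the $\binom{n-1}{2}$ vectors $2e_{i,j}$ ($j<n$) are a doubled standard basis. For the substitution claim, note that $H([1:n-1])=H(\{n\})=\sum_{i<n}e_{i,n}$, so
\[
2\cdot\mathbf{1} \;=\; \sum_{i<j<n} 2 e_{i,j} \;+\; 2\, H([1:n-1]);
\]
the coefficient of any $2e_{i,j}$ ($j<n$) in this expansion is $1$, so swapping a single such generator for $2\cdot\mathbf{1}$ is a unimodular change of basis.

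The main subtlety lies in the choice of the integer lift $c_k$: lifting the $\mathbb{F}_2$-coefficients of $\chi_I$ in $\{\chi_{[1:k]}\}$ naively would only cancel the difference modulo $2$. The specific choice $c_k = \indset{k \in I} - \indset{k+1 \in I}$ is the unique expansion of $\chi_I$ as an integer combination of the staircase vectors $\chi_{[1:k]}$ in $\Zb^{n-1}$, and this ensures the residue is not merely in $2\Zb^m$ but vanishes exactly in the last column, as required for $\Span_{\Zb}\{2e_{i,j}:j<n\}$.
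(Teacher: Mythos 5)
Your proof is correct, and the generation step is argued by a genuinely different route than the paper's. The paper decomposes an arbitrary $I$ into maximal integer intervals, applies the inclusion--exclusion identity \eqref{eq: vkl izkl} for disjoint sets to reduce to the prefixes $[1:k]$, and must then separately express the excluded generators $2e_{i,n}$ in terms of the chosen ones; you instead first replace $I$ by $I^c$ (using $H(I)=H(I^c)$) so that $n\notin I$, and then give a single closed-form telescoping expansion $H(I)=\sum_k\lbrb{\indset{k\in I}-\indset{k+1\in I}}H([1:k])+\sum 2e_{i,j}$, whose residue, by your four-case check, is supported only on pairs with $j\in I$ and hence with $j<n$. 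This complementation-plus-telescoping argument is shorter and avoids both the interval decomposition and the special identity for $2e_{i,n}$; what the paper's route buys is the reusable interval formula \eqref{eq: interval} and the identity \eqref{eq: vkl izkl}, which are invoked again in Lemma \ref{lem: lattice_r} and Corollary \ref{cor:volumes}, together with the pictorial bookkeeping of Figures \ref{fig: ij} and \ref{fig: intervals}. Your remaining steps coincide with the paper's: membership via $2e_{i,j}=H(\{i\})+H(\{j\})-H(\{i,j\})$, independence by triangularity with respect to the last-column coordinates $(i,n)$ (the paper's coordinate ordering in different clothing), and the substitution claim via $2\cdot\one=2H([1:n-1])+\sum_{i<j<n}2e_{i,j}$, where your explicit remark that the swapped-out generator carries a unit coefficient, making the swap unimodular, is in fact slightly more complete than the paper's bare statement of the identity.
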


\begin{proof}
    We first show that  the chosen vectors span $\Im H$, and then that
    they are
    linearly independent as well as indeed elements of 
    $\Im H$.

    The key idea in the proof of this theorem is to split to a set $I\subset [n]$ into intervals of the form $[i:j] := \{i, i+1, 
    \dots, j\} = [1:j]\setminus[1:i-1]$. Indeed,  recall that $H(I)$ includes $e_{i,j}$ if  exactly one of $i$ and $j$ is in $I$. Therefore if we represent as a union of disjoint intervals
    \begin{align*}
        I &= I_1  \cup \dots \cup I_{n_1}
       \quad 
      \text{and} \quad  I^c := [1:n]\setminus I= \widetilde{I}_1 \cup \dots \cup\widetilde{I}_{n_2} ,
    \end{align*} 
    then $H(I)$ contains elements $e_{i,j}$ with $\{i,j\}$ from $I_k\times\widetilde{I}_{k'}
   $ for some $k,k'$.
   % , see Figure \ref{fig: intervals} for a clear illustration of the idea. 
   Thus, as we work with intervals, it is 
   natural to try to express
    $H(I)$ in terms of $H([1:1]), \dots, H([1:n-1])$. 

   The main formula which helps for this is, 
   denoting by $\star$ the
   ''unordered Cartesian product'' 
   \[
   A\star  B:= \lbcurlyrbcurly{\{a,b\}: a \in A \text{ and } b \in B},
   \]
   that for disjoint sets $A$ and $B$,
   \begin{equation}\label{eq: vkl izkl}
     H(A) + H(B) - H(A \cup B) = 2 \sum_{ 
     \{i, j\} \in A\star B } e_{i,j}.
   \end{equation}
   Indeed, 
    formula \eqref{eq: vkl izkl} holds since 
    \begin{align*}
        H(A) + H(B) - H(A\cup B) &= 
        \sum_{\{i,j\}\in A \star A^c } e_{i, j}
        +\sum_{\{i,j\}\in B \star B^c } e_{i, j}
        - 
        \sum_{\{i,j\} \in (A\cup B) \star (A^c\cap B^c) } e_{i, j}
    \\
    &=\sum_{\{i,j\}\in A \star (A^c\setminus B^c) } e_{i, j}
        +\sum_{\{i,j\}\in B \star (B^c\setminus A^c) } e_{i, j},
    \end{align*}
and because we have picked disjoint $A$ and $B$, it holds that
$A^c \setminus B^c = A^c\cap B = B$, and similarly
$B^c \setminus A^c = A$.
Formula \eqref{eq: vkl izkl} is  neatly illustrated in
Figure \ref{fig: ij} and Figure \ref{fig: intervals} for the simple cases
    \begin{equation}
        \label{eq: Hi+Hj - Hij}
    H\lbrb{\{i\}} + H(\{j\}) - H(\{i,j\}) = 2 e_{i, j},
    \end{equation}
\begin{mdframed}[linecolor=black, linewidth=0.5pt, roundcorner=5pt]
\begin{figure}[H]
    \centering % Center the entire figure and its contents
    \scalebox{1}{%
        \begin{tabular}{r@{}l}
        % Upper triangular matrix with shading
        \ytableausetup {mathmode, boxframe=0.1em, boxsize=1.2em}  % Center the tableau
        
        \begin{ytableau}
            %index
            \none & \none[\scriptstyle 2] & \none[\scriptstyle 3] & \none[\scalebox{0.75}{\dots}] & \none[\scriptstyle i]  & \none[\scriptstyle i+1] &\none[\scalebox{0.75}{\dots}]
            &\none[\scriptstyle j]  & \none[\scriptstyle j+1]&
            \none[\scalebox{0.75}{\dots}] & \none[\scriptstyle n] \\
            %line 1
            \none[\scriptstyle 1] & &  & \none & *(green)1 & &\none[] &*(red) 1 & &\none[]& 
            \\
            %line 2  
            \none[\scriptstyle 2] & \none[]&  & \none & *(green)1 & &\none[] &*(red) 1 & &\none[]& 
            \\
            %line dots 
            \none[\scalebox{0.75}{\vdots}] & \none & \none & \none[{\ddots}] & \none & \none \\
            %line i-1 
            \none[\scriptstyle i-1] & \none[]&   \none[]& \none & *(green)1 & &\none[] & *(red) 1& &\none[]& 
            \\
            %line i  
            \none[\scriptstyle i] & \none[]&   \none[]& \none & \none[]&*(green)1 &\none[] &   \shadehalf{green}{red}{1}&*(green)1 &\none[]& *(green)1
            \\
            %line dots 
            \none[\scalebox{0.75}{\vdots}] & \none & \none & \none & \none& \none&\none[{\ddots}] &\none \\
            %line j-1
            \none[\scriptstyle j-1] & \none & \none & \none & \none & \none &  \none&*(red) 1&&\none&\\
            %line j
            \none[\scriptstyle j] & \none & \none & \none & \none & \none &  \none&\none&*(red) 1&\none&*(red) 1\\     
            %line dots 
            \none[\scalebox{0.75}{\vdots}] & \none & \none & \none & \none& \none&\none&\none&\none&\none[{\ddots}] &\none \\
            %line n-1
            \none[\scriptstyle n-1] & \none & \none & \none & \none & \none &  \none&\none&\none&\none&
        \end{ytableau}

        \hspace{4em}
        \begin{ytableau}
            %index
            \none[\scriptstyle 2] & \none[\scriptstyle 3] & \none[\scalebox{0.75}{\dots}] & \none[\scriptstyle i]  & \none[\scriptstyle i+1] &\none[\scalebox{0.75}{\dots}]
            &\none[\scriptstyle j]  & \none[\scriptstyle j+1]&
            \none[\scalebox{0.75}{\dots}] & \none[\scriptstyle n] \\
            %line 1
            &  & \none & *(green)1 & &\none[] & *(green)1  & &\none[]& 
            \\
            %line 2  
            \none[]&  & \none & *(green)1 & &\none[] &  *(green)1 & &\none[]& 
            \\
            %line dots 
            \none & \none & \none[{\ddots}] & \none & \none \\
            %line i-1 
            \none[]&   \none[]& \none & *(green)1 & &\none[] & *(green)1  & &\none[]& 
            \\
            %line i  
            \none[]&   \none[]& \none & \none[]&*(green)1 &\none[] & &*(green)1 &\none[]& *(green)1
            \\
            %line dots 
            \none & \none & \none & \none& \none & \none[{\ddots}] \\
            %line j-1
            \none & \none & \none & \none & \none &  \none& *(green)1 &  &\none& \\
            %line j
            \none & \none & \none & \none & \none &  \none&\none&*(green)1 &\none&*(green)1 \\     
            %line dots 
            \none & \none & \none& \none e & \none& \none & \none& \none&\none[{\ddots}] \\
            %line n
            \none & \none & \none & \none & \none &  \none&\none&\none&\none&
        \end{ytableau}

        \end{tabular}%
    } % Close scalebox

    \caption{ Illustrating the relation \( H(\{i\}) + H(\{j\}) - H(\{i,j\}) = 2 e_{i, j} \). \\
    On the left: \( H(\{i\}) \) is shown in \textcolor{green}{green} and \( H(\{j\}) \) in \textcolor{red}{red}; 
    on the right: \( H(\{i,j\}) \).}
    \label{fig: ij}
\end{figure}
\end{mdframed}
    and
    \begin{equation}
        \label{eq: interval}
    H([i+1:j] = H([1:j]) - H([1:i]) + 2 \sum_{k = 1}^i \sum_{\ell=i+1}^j e_ {k,\ell}.
    \end{equation} 
    In particular, equation \eqref{eq: interval} shows how to express the image of an interval in terms of $H([1:1]), \dots, H[1:n-1], H[1:n] = \mathbf{0}$ and these of type $2e_{i,j}$. We note that
    we have picked in the theorem statement
    only $2e_{i,j}$ with $j<n$, but as it is
    seen from Figure \ref{fig: H123},
\[2e_{i,n} = 2H[1:i] - 2H[1:i-1] - \sum_{k=i+1}^{n-1}2e_{i,k} + 
\sum_{k=1}^{i-1} 2e_{k,i},
\]
so the chosen vectors describe all $2e_{i,j}$ and, in view of \eqref{eq: interval}, all  intervals.   
    \begin{mdframed}[linecolor=black, linewidth=0.5pt, roundcorner=5pt]
    
    \begin{figure}[H]
    \centering
    \scalebox{1}{%
    \begin{tabular}{r@{}l}
    % Upper triangular matrix with shading
    \ytableausetup {mathmode, boxframe=0.1em, boxsize=1.2em}  % Center the tableau

    \begin{ytableau}
    %index
    \none & \none[\scriptstyle 2] & \none[\scriptstyle 3] & \none[\scalebox{0.75}{\dots}] & \none[\scriptstyle i]  & \none[\scriptstyle i+1] 
    & \none[\scriptstyle i+2]&\none[\scalebox{0.75}{\dots}]
    &\none[\scriptstyle j]  & \none[\scriptstyle j+1]
     & \none[\scriptstyle j+2]&
    \none[\scalebox{0.75}{\dots}] & \none[\scriptstyle n] \\
       %line 1
    \none[\scriptstyle 1] & &  & \none &  &*(green)1 &*(green)1&\none[] &*(green)1 &   \shadehalf{green}{red}{1}&\shadehalf{green}{red}{1}&\none[]&\shadehalf{green}{red}{1} 
    \\
       %line 2  
    \none[\scriptstyle 2] & \none[]&  & \none &  &*(green)1 &*(green)1 &\none[] &*(green)1&\shadehalf{green}{red}{1}&\shadehalf{green}{red}{1}&\none[]&\shadehalf{green}{red}{1} 
    \\
           %line dots 
    \none[\scalebox{0.75}{\vdots}] & \none & \none & \none[{\ddots}]  & \none& \none \\
       %line i-1 
    \none[\scriptstyle i-1] & \none[]&   \none[]& \none &  &*(green)1 &*(green)1&\none[] & *(green)1&\shadehalf{green}{red}{1} &\shadehalf{green}{red}{1} &\none[]&\shadehalf{green}{red}{1} 
    \\
           %line i  
    \none[\scriptstyle i] & \none[]&   \none[]& \none & \none &*(green)1 &*(green)1&\none[] & *(green)1&\shadehalf{green}{red}{1} &\shadehalf{green}{red}{1} &\none[]&\shadehalf{green}{red}{1} 
    \\
               %line i+1  
    \none[\scriptstyle i+1] & \none[]&   \none[]& \none & \none &\none& &\none[] & &*(red)1 &*(red)1 &\none[]&*(red)1 
    \\
       %line dots 
    \none[\scalebox{0.75}{\vdots}] & \none & \none & \none & \none &  \none & \none&\none[{\ddots}] \\
       %line j-1
    \none[\scriptstyle j-1] & \none & \none & \none & \none & \none & \none& \none&&*(red) 1&*(red) 1&\none&*(red) 1\\
       %line j
    \none[\scriptstyle j] & \none & \none & \none & \none & \none &  \none&\none&\none&*(red) 1&*(red) 1&\none&*(red) 1\\ 
           %line j+1
    \none[\scriptstyle j+1] & \none & \none & \none & \none & \none &  \none&\none&\none&\none&&\none&\\  
           %line dots 
    \none[\scalebox{0.75}{\vdots}] & \none & \none & \none &  \none& \none&\none & \none &  \none& \none&\none&\none[{\ddots}] \\
           %line n-1
    \none[\scriptstyle n-1] & \none & \none & \none & \none & \none  & \none & \none&  \none&\none&\none&\none&\\     
    \end{ytableau}

    \hspace{1em}
    \begin{ytableau}
    %index
   \none[\scriptstyle 2] & \none[\scriptstyle 3] & \none[\scalebox{0.75}{\dots}] & \none[\scriptstyle i]  & \none[\scriptstyle i+1] 
    & \none[\scriptstyle i+2]&\none[\scalebox{0.75}{\dots}]
    &\none[\scriptstyle j]  & \none[\scriptstyle j+1]
     & \none[\scriptstyle j+2]&
    \none[\scalebox{0.75}{\dots}] & \none[\scriptstyle n] \\
       %line 1
   &  & \none &  &*(green)1 &*(green)1&\none[] &*(green)1 & &&\none[]& 
    \\
       %line 2  
   \none[]&  & \none &  &*(green)1 &*(green)1 &\none[] &*(green)1& &&\none[]&
    \\
           %line dots 
     \none & \none & \none & \none[{\ddots}] & \none \\
       %line i-1 
   \none[]&   \none[]& \none &  &*(green)1 &*(green)1&\none[] & *(green)1& & &\none[]&
    \\
           %line i  
   \none[]&   \none[]& \none & \none &*(green)1 &*(green)1&\none[] & *(green)1& &&\none[]&
    \\
               %line i+1  
     \none[]&   \none[]& \none & \none &\none& &\none[] & &*(green)1 &*(green)1 &\none[]&*(green)1 
    \\
       %line dots 
   \none & \none & \none & \none[{\ddots}] & \none \\
       %line j-1
    \none & \none & \none & \none & \none & \none& \none&&*(green) 1&*(green) 1&\none&*(green) 1\\
       %line j
   \none & \none & \none & \none & \none &  \none&\none&\none&*(green) 1&*(green) 1&\none&*(green) 1\\ 
           %line j+1
   \none & \none & \none & \none & \none &  \none&\none&\none&\none&&\none&\\  
           %line dots 
    \none & \none & \none & \none[{\ddots}] & \none \\
           %line n
 \none & \none & \none & \none & \none  & \none & \none&  \none&\none&\none&\none&\\     
    \end{ytableau}

    \end{tabular}%

    }     \caption{
    Illustrating the relation    $ H([i+1:j]) = H([1:j]) - H([1:i]) +  \sum_{k = 1}^i \sum_{\ell=i+1}^j 2e_ {k,\ell}$.
     \\
    On the left: \( H([1:i]) \) is shown in \textcolor{green}{green} and \( H([1:j]) \) in \textcolor{red}{red};
    on the right: $H([i+1:j])$.
}
\label{fig: intervals}% Close scalebox
    \centering 
    \end{figure}
    \end{mdframed}
Therefore, applying \eqref{eq: vkl izkl}, $H(I) = H(\cup I_i) = \sum_i H(I_i) -\sum_{k,\ell} 2 e_{k,\ell}$, which shows
    that the vectors from the theorem statement generate $\Lat \Im H$. To see they are linearly independent,
    we once again refer to Figure \ref{fig: H123}, from which we see that, $H[1:n-1], H[1:n-2],
\dots, H[1:1], 2e_{n-2,n-1}, 2e_{n-3,n-1}, \dots, 2e_{1,2}$ form an upper triangular matrix
when we order the coordinates in the order $(n-1,n), (n-2,n), \dots,
(1,n),(n-2,n-1),(n-3,n-1), \dots,(1,2)$. Therefore, these vectors are linearly independent and this concludes the proof of first part the theorem.

For the second part, that is to prove that we can change a vector $2e_{i,j}$ with $i<j$ from the basis with $2 \cdot \one$, it is enough to observe that
   \[
   2 \cdot \one = 2 H([1:n-1]) + 2 \sum_{i<j<n}e_{i,j}.
   \]
\end{proof}

\section{Main results}
Our first result considers
the problem for symmetric Bernoulli random variables,
and illustrates in the neatest possible way the concepts of the proof for the most general case. In this simple context and in the specific case $n=3$, the problem has
also appeared in the French journal \cite{RMS-134-2-2024}.
\subsection{The binary case}

 \begin{theorem}
 \label{thm: binary} Let $\boldsymbol{X}_{\!1},  \dots, \boldsymbol{X}_{\!n}$ be $d$-dimensional vectors whose entries are iid samples from a
random variable $X\sim Ber(1/2)$. Let $p_d$ be the probability that these vectors are equidistant. Then, as $d \to \infty$,
 \begin{equation}
     \label{eq: main asymp binary}
 p_d \simi
\frac{1}{d^{(m-1)/2}}\cdot
\sqrt{\frac{2^{3m-2n-1}}{m\pi^{m-1}}},
% \frac{2^{m-n+(m-1)/2}}{\sqrt{\pi^{m-1}m}},
\quad
\text{where $ m = \binom{n}{2}$}.
 \end{equation}
 \end{theorem}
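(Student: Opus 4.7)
The plan is to reduce equidistance to the vanishing of a sum of iid integer vectors, then invoke Krafft's multidimensional local limit theorem as in \eqref{eq: toy}, and finally compute the two ingredients---the covariance determinant and the lattice volume---explicitly, using the $Ber(1/2)$-symmetry for the former and Theorem \ref{thm: lattice} for the latter.

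For each coordinate $\ell \in \{1,\dots,d\}$, introduce the vector $\boldsymbol{V}_\ell \in \Zb^{m-1}$ indexed by the pairs $(i,j)$ with $i<j$ and $(i,j)\neq (1,2)$ via
\[
\boldsymbol{V}_\ell^{(i,j)} := \ind{X_i^{(\ell)}\neq X_j^{(\ell)}} - \ind{X_1^{(\ell)}\neq X_2^{(\ell)}}.
\]
Since $(X_i - X_j)^2 = \ind{X_i \neq X_j}$ for binary inputs, equidistance is equivalent to $\sum_{\ell=1}^d \boldsymbol{V}_\ell = \mathbf{0}$; the symmetry of $Ber(1/2)$ gives $\Ebb{\boldsymbol{V}} = \mathbf{0}$, so Krafft's local limit theorem yields
\[
p_d \simi \frac{\labsrabs{\Lat \boldsymbol{V}}}{(2\pi d)^{(m-1)/2}\sqrt{\labsrabs{\Var \boldsymbol{V}}}}.
\]

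To compute $\labsrabs{\Var \boldsymbol{V}}$, set $Y_{ij}=\ind{X_i\neq X_j}$. Then $\Var(Y_{ij})=\tfrac14$; disjoint pairs are independent, and for a shared index $\Cov(Y_{ij},Y_{ik})=\Pbb{X_i\neq X_j,\, X_i\neq X_k}-\tfrac14 = \tfrac14-\tfrac14 = 0$, so $\Var H(\boldsymbol{X})=\tfrac14 I_m$. Writing $\boldsymbol{V} = T\, H(\boldsymbol{X})$ with $T = [-\one \mid I_{m-1}]$, the matrix-determinant lemma gives $\labsrabs{\Var \boldsymbol{V}} = 4^{-(m-1)}\det(I_{m-1}+\one\one^{\top}) = m/4^{m-1}$.

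To compute $\labsrabs{\Lat \boldsymbol{V}} = \labsrabs{T(\Lat H)}$, use the basis of $\Lat H$ from Theorem \ref{thm: lattice} with $2 e_{1,2}$ replaced by $2\cdot\one$. Since $2\cdot\one \in \ker T$, the images under $T$ of the remaining $m-1$ basis vectors form a basis of $\Lat \boldsymbol{V}$. Ordering the coordinates of $\Zb^{m-1}$ so that pairs $(i,j)$ with $j<n$ come first, the resulting change-of-basis matrix is block upper-triangular: the $[\binom{n-1}{2}-1]\times[\binom{n-1}{2}-1]$ upper-left block equals $2 I$ (since $T(2e_{i,j})=2 e_{i,j}$ for $(i,j)\neq(1,2)$, $j<n$), while the $(n-1)\times(n-1)$ lower-right block $L$ has entries $L_{i,1} = \ind{i=1} - 1$ and $L_{i,k} = \ind{i\leq k}$ for $k\geq 2$. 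Subtracting the first row of $L$ from every other row sends its last column to $(1,0,\dots,0)^{\top}$ and leaves a lower-triangular $(n-2)\times(n-2)$ submatrix with $-1$s on the diagonal; cofactor expansion along the last column then gives $\labsrabs{\det L}=1$, whence
\[
\labsrabs{\Lat \boldsymbol{V}} = 2^{\binom{n-1}{2}-1} = 2^{m-n}.
\]
Combining the two computations via Krafft's formula produces $p_d \simi 2^{m-n}\cdot 2^{m-1}/\bigl[(2\pi)^{(m-1)/2}\sqrt{m}\bigr]\cdot d^{-(m-1)/2}$, which rearranges to exactly \eqref{eq: main asymp binary}. The main subtlety is the volume step: the substitution $2 e_{i,j} \to 2\cdot\one$ from Theorem \ref{thm: lattice} is precisely what places $\ker T$ inside the basis, so that the block structure---and hence the clean factor $2^{m-n}$---emerges after projection.
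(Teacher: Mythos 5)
Your proof is correct and follows essentially the same route as the paper: reduction to $\sum_{\ell}\boldsymbol{V}_{\!\ell}=\mathbf{0}$, Krafft's local limit theorem, the covariance determinant $m\,4^{-(m-1)}$, and the lattice volume $2^{m-n}$ derived from the basis of Theorem \ref{thm: lattice}. Your projection argument via $T$ with $2\cdot\one\in\ker T$ is a repackaging of the paper's Lemma \ref{lem: H^ast} (the embedding $H^\ast_e$), and your rank-one-update computation of $\labsrabs{\Var(\boldsymbol{V})}$ is equivalent to the paper's use of Proposition \ref{prop: pachi krak}.
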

 \begin{proof}
    Let us define
    \[
    \dijm{i}{j}{\ell} := \ind{\boldsymbol{X}_i^{(\ell)} \neq 
    \boldsymbol{X}_j^{\!(\ell)}},
\]
and denote the usual Hamming distance between binary vectors by
    \[d_H\lbrb{\boldsymbol{X}_i,\boldsymbol{X}_j} :=
    \sum_{\ell=1}^d
    \dijm{i}{j}{\ell}
    % =
    % \sum_{m=1}^n \lbrb{\boldsymbol{X}_i^{(m)} - 
    % \boldsymbol{X}_j^{(m)}}^2.
    .
    \]
 Next, we quantify the fact that all $d_H(\boldsymbol{X}_i,\boldsymbol{X}_j)$ are equal for $i \neq j$ by the
    $m:=\binom{n}{2}$ equations $d_H(\boldsymbol{X}_i,\boldsymbol{X}_j) = d_H(\boldsymbol{X}_{1},\boldsymbol{X}_2)$. Note that
    if $(i,j)=(1,2)$ the equation is trivial, and we can thus omit it. Therefore
    $p_d$ is exactly equal to
    the probability of the event
  \[
\begin{cases}
    \sum_{\ell = 1}^d\left( \dijm{1}{3}{\ell} - \dijm{1}{2}{\ell} \right) = 0, \\
    \sum_{\ell = 1}^d \left( \dijm{1}{4}{\ell} - \dijm{1}{2}{\ell} \right) = 0, \\
    \vdots \\
    \sum_{\ell = 1}^d \left( \dijm{n-1}{n}{\ell} - \dijm{1}{2}{\ell} \right) = 0,
\end{cases}
\]
    which written in vector form with
    \[
\boldsymbol{\delta^{\!(\ell)}}
    :=\lbrb{\dijm{i}{j}{\ell}}_{1\leq i<j<n}= 
\begin{bmatrix}
  \dijm{1}{3}{\ell} \\
  \dijm{1}{4}{\ell} \\
  \vdots \\
  \dijm{n-1}{n}{\ell}
\end{bmatrix} \in \{0,1\}^{m-1}, 
\quad 
\one:=
\begin{bmatrix}
  1 \\
  1 \\
  \vdots \\
  1
  \end{bmatrix}
    \]
    becomes
    \begin{equation}
        \label{eq: sum=0}
        \sum_{\ell=1}^d \boldsymbol{V}_{\! \ell}:=
    \sum_{\ell=1}^{d}\lbrb{ \boldsymbol{\delta^{\!(\ell)}}
    - \boldsymbol{1}\cdot\dijm{1}{2}{\ell}} = \mathbf{0}.
    \end{equation}
   The vectors $\boldsymbol{V_i}$
   are iid with common law $\boldsymbol{V}$. We will estimate the probability of \eqref{eq: sum=0} by the local multidimensional CLT. Using the theorem from \cite[p. 3]{Krafft-1967}, we obtain
\begin{equation}
\label{thm: Krafft}
\P \lbrb{ \sum_{\ell=1}^d \boldsymbol{V}_{\!\ell} = \mathbf{0} }
\simi 
\frac{\labsrabs{\Lat \boldsymbol{V}}}{\sqrt{(2\pi d)^{m-1}\,|\Var(\boldsymbol{V})|}},
\end{equation}
where $|\Lat \boldsymbol{V}|$ denotes the determinant of a matrix formed by a lattice basis of the lattice generated by the support of $\boldsymbol{V}$ (see Section~\ref{sec: Lattice}), and $|\Var(\boldsymbol{V})|$ is the determinant of the covariance matrix of $\boldsymbol{V}$.

    Therefore our problem is
    reduced to understanding the covariance matrix of $\boldsymbol{V}$ and the lattice in $\Zb^{m-1}$ spanned by
    its support. The first task is straightforward:
    we calculate that $\Ebb{\delta_{i,j}} = 
    \Ebb{\lbrb{\delta_{i,j}}^2}
    = 1/2$, and $\dij{i_1}{j_1}\indep\dij{i_2}{j_2}$
    % \perp\Ebb{\dij{i_1}{j_1}\dij{i_2}{j_2}} = 0$ 
    if $(i_1,j_1)\neq(i_2,j_2)$, which leads to
    \[
    \Cov(\dij{i_1}{j_1} - \delta_{n-2,n},
    \dij{i_2,j_2}-\delta_{n-2,n}) = \frac{1}{4} + \frac{1}{4}\ind{(i_1,j_1) = (i_2,j_2)}.
    \]
    The determinant of this matrix is $4^{-(m-1)}m$, see Proposition \ref{prop: pachi krak}. 
    
    We are left with the more subtle problem 
    of finding  a basis of the induced  by $\Im \boldsymbol{V}$ 
  lattice, which we do by using the
  techniques developed in Section \ref{sec: Lattice}, in particular Theorem \ref{thm: lattice}.
  \begin{lemma}\label{lem: H^ast}
  Define $H^\ast:\{0,1\}^n\to \Zb^{m-1}$ to be the transformation from \eqref{eq: sum=0} such that
  $\boldsymbol{V} = H^\ast(\boldsymbol{X})$, that is,
  \[
  H^\ast(\boldsymbol{X}) = \boldsymbol{\delta} - \boldsymbol{1} \cdot \delta_{1,2}.
  \]
      Then then vectors \begin{itemize}
          \item $H^\ast([1,0,0,\dots,0, 0])$,
      $H^\ast([1,1,0,\dots,0 ,0]),
      \dots,
      H^\ast([1,1,1,\dots,1,0])$
      \item alongside these of the form $2e_{i, j}$ for all $i<j$ such that $2<j<n$,
      \end{itemize}
      form a basis of $\Lat \boldsymbol{V} :=
      \Lat \supp \boldsymbol{V}$. Moreover, its fundamental volume is
      $\labsrabs{\Lat \boldsymbol{V}} = 2^{m-k}$.
  \end{lemma}
  \begin{proof}[Proof of Lemma \ref{lem: H^ast}]
  We will use the formalism of Section \ref{sec: Lattice}, and in particular Theorem 
\ref{thm: Krafft}. However, as $H(\boldsymbol{X}) = (\delta_{i,j})_{1\leq i<j\leq n}$  is $m$-dimensional, it is natural to embed $H^\ast$ into $\Rb^m$, which we do by adding a zero as a $(1,2)$ coordinate. Therefore, denoting the embedded version by $H_e^\ast$, we have that 
\begin{equation}
\label{def: embed}
H_e^\ast(\boldsymbol{X}) = H(\boldsymbol{X}) - \boldsymbol{1} \cdot \delta_{1,2}(\boldsymbol{X}),
\end{equation}
where $\delta_{1,2}(\boldsymbol{X})$ is the coordinate of $H(\boldsymbol{X})$ along $e_{1,2}$. We show that a basis of $\Lat H_e^\ast:=\Lat \Im H_e^\ast$ is obtained by applying $H_e^\ast$
to a basis of $\Lat H$ from Theorem \ref{thm: lattice}.

We recall that from Theorem \ref{thm: lattice}, the vectors
\begin{itemize}
    \item $H([1:1]), \dots, H([1:n-1])$,
    \item all $2e_{i,j}$ with $j<n$
    and $(i,j)\neq(1,2)$,
    \item and $2 \cdot \one$
\end{itemize}
form a basis of $\Lat H$. What happens if we subtract the $(1,2)$th coordinate of these vectors?
\begin{itemize}
    \item The transformation of the vectors from Figure \ref{fig: intervals}, $H([1:1]), \dots, H([1:n-1])$, is shown in Figure \ref{fig: H transformed},
    \item the vectors $2e_{i,j}$ with $2<j<n$ do not change,
    \item and $2 \cdot \one$ is transformed to $\mathbf{0}$. This is due to our embedding procedure: $\boldsymbol{V} = H(\boldsymbol{X})$ is $m-1$ dimensional, and $H(\boldsymbol{X})$ is $m$-dimensional.
\end{itemize}

\begin{mdframed}[linecolor=black, linewidth=0.5pt, roundcorner=5pt]
    \begin{figure}[H]
    \centering
    \scalebox{1.05}{%
    \begin{tabular}{r@{}l}
    % Upper triangular matrix with shading
    \ytableausetup {mathmode, boxframe=0.1em, boxsize=1.2em}  % Center the tableau

    \begin{ytableau}
    %index
    \none & \none[\scriptstyle 2] & \none[\scriptstyle 3] & \none[\scriptstyle 4] & \none[\scalebox{0.75}{\dots}] & \none[\scriptstyle n\scalebox{0.5}{$-$}1] & \none[\scriptstyle n] \\
       %line 1
    \none[\scriptstyle 1] & *(gray) &  &  & \none & & \\
       %line 2  
    \none[\scriptstyle 2] & \none &  *(red)-1 &  *(red)-1 & \none[\scalebox{0.75}{\dots}] & *(red)-1 &  *(red)-1 \\
       %line 3  
    \none[\scriptstyle 3] & \none & \none &  *(red)-1 & \none &  *(red)-1 &  *(red)-1 \\
       %line 4 
    \none[\scalebox{0.75}{\vdots}] & \none & \none & \none & \none[{\ddots}] & \none \\
       %line 5
    \none[\scriptstyle n-2] & \none & \none & \none & \none & *(red)-1  &   *(red)-1\\
       %line 6
    \none[\scriptstyle n\scalebox{0.5}{$-$}1]  & \none & \none & \none & \none & \none & *(red)-1  \\        
    \end{ytableau}

    \hspace{0.5em}

    \begin{ytableau} 
    \none[\scriptstyle 2] & \none[\scriptstyle 3] & \none[\scriptstyle 4] & \none[\scalebox{0.75}{\dots}] & \none[\scriptstyle n\scalebox{0.5}{$-$}1]  & \none[\scriptstyle n] \\
       %line 1
       *(gray)
    & *(green)1 & *(green)1 & \none & *(green)1 & *(green)1 \\
       %line 2  
    \none & *(green)1 & *(green)1 & \none[\scalebox{0.75}{\dots}] & *(green)1 & *(green)1 \\
       %line 3  
    \none & \none &  & \none &  &  \\
       %line 4 
    \none & \none & \none & \none[{\ddots}] & \none \\
       %line 5
    \none& \none & \none & \none &  &  \\
       %line 6
    \none& \none & \none & \none & \none &  \\
    \end{ytableau}

    \hspace{0.5em}
%third
    \begin{ytableau}
    %index
    \none[\scriptstyle 2] & \none[\scriptstyle 3] & \none[\scriptstyle 4] & \none[\scalebox{0.75}{\dots}] & \none[\scriptstyle n\scalebox{0.5}{$-$}1]  & \none[\scriptstyle n] \\
       %line 1
    *(gray)  &  &  & \none & *(green)1 &*(green)1   \\
       %line 2  
    \none &  &   & \none[\scalebox{0.75}{\dots}]&*(green)1  &*(green)1  \\
       %line 3  
    \none & \none &  & \none & *(green)1  &  *(green)1 \\
       %line 4 
    \none & \none & \none & \none[{\ddots}] & \none \\
       %line 5
     \none& \none & \none & \none & *(green)1  &*(green)1   \\
       %line 6
    \none& \none & \none & \none & \none &   \\        
    \end{ytableau}

    \hspace{0.5em}
%fourth
    \begin{ytableau}
    %index
    \none[\scriptstyle 2] & \none[\scriptstyle 3] & \none[\scriptstyle 4] & \none[\scalebox{0.75}{\dots}] & \none[\scriptstyle n\scalebox{0.5}{$-$}1] & \none[\scriptstyle n] \\
       %line 1
    *(gray)  &  &  & \none &   &*(green)1   \\
       %line 2  
    \none &  &   & \none[\scalebox{0.75}{\dots}] &  &*(green)1   \\
       %line 3  
    \none & \none &   & \none &   &*(green)1    \\
       %line 4 
    \none & \none & \none & \none[{\ddots}] & \none \\
       %line 5
    \none& \none & \none & \none &   &*(green)1    \\
       %line 6
    \none& \none & \none & \none & \none &  *(green)1 \\        
    \end{ytableau}
    \end{tabular}%
    } % Close scalebox
    \caption{The vectors $H^\ast_e([1:1]), H^\ast_e([1:2]), \dots H^\ast_e([1:n-2])$ and $H^\ast_e([1:n-1])$, with the embedded cell shaded in grey.\\
    Only the first vector in Figure \ref{fig: H123} have a non-zero $(1,2)$th coordinate
    so $H^\ast_e $ differs from $H$ only for it. }
    \label{fig: H transformed}
    \end{figure}
        
\end{mdframed}

We argue that the vectors from Figure \ref{fig: H transformed} alongside $2e_{i,j}$ for $2<j<n$ and 
$(i,j)\neq (1,2)$
are a basis of $\Lat H^\ast_e$. This is true
 because first
\begin{itemize}
    \item $H^\ast_e([1:2]) = H([1:2]), H^\ast_e([1:3])=H([1:3]), \dots, H^\ast_e([1:n-1]) =
    H([1:n-1])$, and $2e_{i,j}$ for $2<j<n$
    and $(i,j)\neq(1,2)$ are independent by Theorem \ref{thm: lattice}
    \item $  H^\ast_e([1:1])$ is independent of them because it has coordinates $(0,-1)$ along $(e_{1,n},e_{2,n})$, and all other vectors have equal coordinates along these them.
\end{itemize}

Further, we have that $\dim \Lat H^\ast_e$ is at most the dimension
of the space, that is $m$ minus one, because of the
null coordinates along $e_{1,2}$. The chosen vectors vectors are
exactly $m-1$, because they are one less than the vectors
in the basis of $\Lat H$, which are $m$. Moreover they are in $\Im H^\ast$ which is clear from the definition \eqref{def: embed}. Therefore
they form a basis of $\Lat H_e^\ast$. 

The reverse embedding, that is removing coordinate $(1,2)$, produces a lattice of $H^\ast$. It remains to calculate the determinant, which we do by ordering the vectors in a suitable block structure:
 let $M$ be an $(m-1)\times (m-1)$ matrix of $0$s and $1$s. We enumerate the rows and columns of $M$ by $(j,i)$ with $i<j$ and $(i,j)\neq (1,2)$, and order them in the reverse lexicographical order. In this notation the rows $M_{(j,i)}$ of $M$ are
\begin{equation*}
M_{(j,i)}=2e_{i,j} \text{ if } i<j<n, \quad
M_{(n,i)}=H([1:i+1]) \text{ for } i<n-1,
\quad
\text{and}
\quad 
M_{(n,n-1)}=\one-H([1:1]).
\end{equation*}
Then $M$ is lower diagonal, and can be written in the form
\begin{equation}
\label{eq:matrix M}
M :=
\begin{bNiceArray}{c|c}[margin]
 2I_{(m-n)\times(m-n)} & \boldsymbol{0} \\
 \hline
 \ast                & M_1
\end{bNiceArray}
\end{equation}
where $I$ is the identity matrix, and  $M_1$ is lower diagonal $(n-1)\times (n-1)$ matrix with 1s along the diagonal.
 Thus, the determinant is $2^{m-n}$ as claimed.

  \end{proof}
Equipped with the last lemma, and because $\boldsymbol{V}$ is
$m-1$ dimensional,
equation \eqref{thm: Krafft} implies
that
\[
\P\lbrb{\sum_{\ell=1}^d
\boldsymbol{V}_{\!\ell}= \mathbf{0}} \simi \frac{2^{m-n}}{\sqrt{(2\pi d)^{m-1}4^{-(m-1)}m}} = \frac{1}{d^{(m-1)/2}}\cdot
\sqrt{\frac{2^{3m-2n-1}}{m\pi^{m-1}}},
% \frac{2^{m-n+(m-1)/2}}{\sqrt{\pi^{m-1}m}},
\quad
\text{where}
\quad
m = \binom{n}{2},
\]
as claimed.
\end{proof}
\subsection{The lattice case}
The arguments from the binary case in fact
can be used for
all random vectors
with integer
support. Although
this is clear for
the idea of the local
CLT, the rather surprising fact is that
the basis from Theorem \ref{thm: lattice} obtained in the context of binary vectors 
is
fundamental in the general case as well. Another gratifying fact is that the exact constant in the asymptotics can be calculated for any family of vectors considered.

% \begin{remark}
% If $\supp X = \{1, \sqrt{2}$, problem is different.
% \end{remark}

\begin{theorem}
    \label{thm: general lattice}
 Let $\boldsymbol{X}_1,  \dots, \boldsymbol{X}_n$ be $d$-dimensional vectors
 whose entries are iid
 samples from a random variable $X$
 with finite lattice support. Let 
$p_d$ be the probability that these vectors are equidistant.
% , and
%  \[
%  G := \gcd \lbcurlyrbcurly{x_1 - x_2: x_1,x_2 \in \supp X
%  \text{ and } x_1\neq x_2}. (\text{actually irrelevant but I'll
%  use it elsewhere})
%  \]
Then,  for iid $X_1,X_2,X_3\sim X$,
and constants
 \[
m = \binom{n}{2},
\quad
\text{and}
\quad
C_1 :=  \mathrm{Cov}((X_1 - X_2)^2, (X_1-X_3)^2) = \Var\lbrb{\lbrb{X-\Ebb{X}}^2},
\]
we have that as $d \to \infty$,
 \begin{equation}
     \label{eq: main asymp lattice}
 p_d \simi
\frac{1}{d^{(m-1)/2}}\cdot
\Big(
m(2\pi)^{m-1}(\Var X)^{2(m-n)}(4\lbrb{\Var X}^2+(n-2)C_1)^{n-1}
\Big)^{-1/2}.
% \sqrt{\frac{1}{2^{n-1}m(\pi\Var(X))^{m-1}(4\Var(X)+(n-2)C_1)^{n-1}}},
 \end{equation}
\end{theorem}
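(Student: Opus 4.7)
My plan is to mirror Theorem \ref{thm: binary}, with the additional work concentrated in analysing $\Lat \mathbf V$ and $\Var(\mathbf V)$ for general lattice-valued $X$. By the translation and dilation invariance of the equidistance event I would first reduce to the case $\supp X \subseteq \Zb$ with $\gcd\{a-b : a,b \in \supp X\} = 1$. Setting $Y_{i,j}^{(\ell)} := (X_i^{(\ell)} - X_j^{(\ell)})^2$ and $\mathbf V_\ell := (Y_{i,j}^{(\ell)} - Y_{1,2}^{(\ell)})_{(i,j)\neq(1,2)} \in \Zb^{m-1}$, equidistance is exactly the event $\{\sum_{\ell=1}^d \mathbf V_\ell = \boldsymbol 0\}$, so Krafft's local limit theorem \eqref{thm: Krafft} reduces the task to computing $|\Lat \mathbf V|$ and $|\Var \mathbf V|$.

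For the lattice, restricting $X_k \in \{0,1\}$ realises each $H_e^\ast(I)$ of Lemma \ref{lem: H^ast}, whence $\Lat H_e^\ast \subseteq \Lat \mathbf V$. To establish the reverse inclusion $\Lat \mathbf V \subseteq \Lat H_e^\ast$ I would use the level-set decomposition $X_k = \sum_{a\geq 1}\ind{k \in I_a}$ with $I_a := \{k : X_k \geq a\}$ and expand
\[ (X_i - X_j)^2 = \sum_{a,b}\bigl(\ind{i \in I_a} - \ind{j \in I_a}\bigr)\bigl(\ind{i \in I_b} - \ind{j \in I_b}\bigr); \]
the diagonal ($a = b$) contributions sum to $\sum_a H(I_a)_{i,j}$, while each off-diagonal pair contributes $0$ or $\pm 2$ at coordinates spanned by the $2e_{i,j}$ part of the basis from Theorem \ref{thm: lattice}. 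Tracing these contributions through the $\mathbf V$-construction would place every $\mathbf V(\mathbf X)$ in $\Lat H_e^\ast$, so that Lemma \ref{lem: H^ast} delivers $|\Lat \mathbf V| = 2^{m-n}$.

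For the covariance, I would write $\mathbf Y = (Y_{i,j})_{i<j}$ so that $\mathbf V = T\mathbf Y$ with $T = [-\mathbf 1 \mid I_{m-1}]$. A direct moment computation, using $C_1 = \Var((X-\Ebb{X})^2)$ and $\Var((X_1-X_2)^2) = 2C_1 + 4(\Var X)^2$, gives
\[ A := \Var(\mathbf Y) = \bigl(2C_1 + 4(\Var X)^2\bigr) I_m + C_1\,\mathcal H, \]
where $\mathcal H$ is the adjacency matrix of the line graph $L(K_n)$. By Proposition \ref{prop: line Kn}, $\mathcal H$ has eigenvalues $\{2(n-2),\,n-4,\,-2\}$ with multiplicities $1,\,n-1,\,m-n$, with $\mathbf 1$ spanning the top eigenspace; therefore $A$ has eigenvalues $\lambda_1 = 4(\Var X)^2 + 2(n-1)C_1$, $\lambda_2 = 4(\Var X)^2 + (n-2)C_1$ and $\lambda_3 = 4(\Var X)^2$ with the same multiplicities. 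Since $\ker T = \Span(\mathbf 1)$, writing $T = T'Q_1^T$ for an orthonormal basis $Q_1$ of $\mathbf 1^\perp$ and a square invertible $T'$ yields
\[ |\Var \mathbf V| = \det(TT^T)\cdot \det\bigl(A|_{\mathbf 1^\perp}\bigr) = m\,\lambda_2^{n-1}\lambda_3^{m-n}, \]
using $\det(TT^T) = \det(J_{m-1}+I_{m-1}) = m$. Substituting $|\Lat \mathbf V|^2 = 4^{m-n}$ and this formula into \eqref{thm: Krafft} and cancelling $4^{m-n}$ against $(4(\Var X)^2)^{m-n}$ produces exactly \eqref{eq: main asymp lattice}. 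The principal obstacle is the reverse lattice inclusion: the cross-terms from the level-set expansion must be carefully bookkept and shown to lie in the integer span of the basis from Lemma \ref{lem: H^ast}, which forms the technical core of the argument.
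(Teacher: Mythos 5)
Your overall architecture (Krafft's multidimensional LLT, then $|\Lat \mathbf V|$ and $|\Var\mathbf V|$ separately) is the paper's, and your covariance computation is a correct and genuinely cleaner alternative to the paper's: instead of bordering $\Var(\mathbf V)$ into the $m\times m$ matrix $\cA$ of \eqref{eq: detSigma} and evaluating an arrowhead determinant in the special basis $\mathcal B$ (Proposition \ref{prop: det cA}), you write $\mathbf V=T\mathbf Y$ with $\Var(\mathbf Y)=C_0I+C_1\cH$, use $\ker T=\Span(\one)$ and $\det(TT^t)=m$, and read off $\det\Var(\mathbf V)=m\,\lambda_2^{n-1}\lambda_3^{m-n}$ from the eigenvalue multiplicities of $\cH$ in Proposition \ref{prop: line Kn}; this checks out and reproduces $mC_0^{-1}\det\cA$. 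Your reverse inclusion $\Lat\mathbf V\subseteq \Lat H_e^\ast$ via the level-set expansion is also sound (it is essentially the paper's mod-$2$ argument: the diagonal part is $\sum_a H(I_a)$ and the even remainder lies in $2\Zb^{m-1}\subseteq\Lat H_e^\ast$), even though you flag it as the main risk.

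The genuine gap is in the direction you dismiss in one clause: ``restricting $X_k\in\{0,1\}$ realises each $H_e^\ast(I)$, whence $\Lat H_e^\ast\subseteq\Lat\mathbf V$.'' Your normalisation $\gcd\{a-b: a,b\in\supp X\}=1$ does not put $\{0,1\}$ (nor any two points at distance $1$) inside $\supp X$; for instance $\supp X=\{0,2,5\}$ has coprime differences but smallest gap $2$. Restricting the coordinates to two support values $a,b$ only produces $(a-b)^2H_e^\ast(I)$, and the analogous configurations only give $2(a-c)(b-c)e_{i,j}$-type vectors, so what you obtain directly is $\bigl((a-b)^2\Lat H_e^\ast\bigr)\subseteq\Lat\mathbf V$ for each pair of support points, not $\Lat H_e^\ast\subseteq\Lat\mathbf V$. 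Without the latter you only know $\Lat\mathbf V$ is a sublattice of $\Lat H_e^\ast$, hence $|\Lat\mathbf V|$ is some integer multiple of $2^{m-n}$, and the constant in \eqref{eq: main asymp lattice} is not pinned down. The missing step is exactly the paper's Bézout argument: since $\gcd$ of the squared differences is again $1$, integer combinations of the scaled copies recover $\Lat H_e^\ast\subseteq\Lat\mathbf V$ (the paper phrases this as $x^2\Lat H^\ast\subseteq\Lat\mathbf V$ for all $x\in\supp X$ together with $\gcd\{x^2\}=1$). With that repair, and keeping your covariance route, the proof goes through and yields \eqref{eq: main asymp lattice}.
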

\begin{remark} For specific calculations,
 we note that in terms of the moments $m_i:=\Ebb{X^i}$ and 
 central moments
 $\mu_i :=\Ebb{(X-m_1)^i}$,
%  \[
% C_0
% = 2m_4 - 8m_3m_1 + 2m_2^2 + 8m_1^2 m_2 - 4m_1^4,
% \]
\[
C_1 
= m_4 - 4m_3m_1 - m_2^2 + 8m_1^2 m_2 - 4m_1^4= \mu_4 - \mu_2^2 = \Var\lbrb{(X-m_1)^2}.
\] 
\end{remark}
\begin{remark}\label{rem: metric}
The essence of the arguments in the following proof is not specific to the Euclidean distance. In particular, if one considers the $L^p$ distance, the objects of interest in the spirit of
\eqref{def: Vij} would be
 \[
 V_{i,j}:= 
 \lbrb{\boldsymbol{X}_{\! i}^{\!(\ell)}-\boldsymbol{X}_{\! j}^{\!(\ell)}}^p -\lbrb{\boldsymbol{X}_{\! 1}^{(\ell)}-\boldsymbol{X}_{\! 2}^{\!(\ell)}}^p.
 \]
Therefore, $|\Var(\boldsymbol{V})|$ remains unaffected and retains the form in~\eqref{eq: CovV}, expressed in terms of $C_0$ and $C_1$ from~\eqref{def: C0}. However, applying $\Lat\boldsymbol{V}$ can cause difficulties. To illustrate, suppose $X$ is integer-valued. If $p$ is a natural number (i.e., an integer), then $\supp \boldsymbol{V}$ also has integer entries. Otherwise, if $p$ is not a natural number (i.e., non-integral), the $L^p$-distance may produce irrational values. In this latter case, since $\boldsymbol{V}$ is no longer confined to a lattice, we must resort to the most general result, Theorem~\ref{thm: general}.

\end{remark}

\begin{proof}[Proof of Theorem]
We will follow the ideas from the proof in the binary case presented in Theorem \ref{thm: binary}.
In the same manner, we have the analogue of \eqref{thm: Krafft},
  \begin{equation}
        \label{eq: Krafft_general}
  p_d =   \P \lbrb{\sum_{\ell=1}^d \boldsymbol{V}_{\! \ell} = \mathbf{0}} \simi \frac{\labsrabs{\Lat \boldsymbol{V}}}{\sqrt{(2\pi d)^{m-1}|\Var(\boldsymbol{V})|}},
 \end{equation}
 where the vector $\boldsymbol{V}_{
 \!\ell}$ is composed not of the Hamming distances between
 coordinates, but of the Euclidean ones (or another one as mentioned in Remark \ref{rem: metric}). Explicitly, the coordinates of $\boldsymbol{V_\ell} = (V_{\ell}^{(i,j)})$ with
 $1\leq i < j\leq n$ and
 $(i,j)\neq (1,2)$, are
 \begin{equation*}
 V^{(i,j)}_{\ell} := 
 \lbrb{\boldsymbol{X}_{\! i}^{\!(\ell)}-\boldsymbol{X}_{\! j}^{\!(\ell)}}^2 -\lbrb{\boldsymbol{X}_{\! 1}^{(\ell)}-\boldsymbol{X}_{\! 2}^{\!(\ell)}}^2.  
 \end{equation*}
 We start by showing that the lattice spanned by 
  vectors from a general 
  discrete distribution is basically the same (modulo scaling) as the one generated by binary ones. First note that by multiplying $X$ by the  lcm of $\supp{X}$,
  we can assume that its elements are integer. Moreover,
  the same scaling argument shows that we can also assume that
    $\gcd \supp X$ is 1.

Let us 
  introduce a random variable with the same law as $\boldsymbol{V}_{\! i}$, constructed from  iid
  $X_1, \dots, X_n \sim X$,
  \begin{equation}
        \label{def: Vij}
\boldsymbol{V}:= (V_{i,j}) = ((X_i-X_j)^2 - (X_1-X_2)^2).
  \end{equation}
  To use our results from the binary case, let us consider the problem modulo 2. Using the transformation $H^\ast$ from Lemma \ref{lem: H^ast}, $\boldsymbol{V}\pmodsmall{2} = H^\ast(\boldsymbol{X}\pmodsmall{2})$,
  and because $\gcd(\supp{\boldsymbol{X}}) = 1$,
  the basis from Lemma \ref{lem: H^ast}
  describes all values of $\boldsymbol{V}\pmodsmall{2}$.
Moreover, let us recall the relation
\eqref{eq: Hi+Hj - Hij}
\[
    H\lbrb{\{i\}} + H(\{j\}) - H(\{i,j\}) = 2 e_{i, j},
\]
which, after substituting in the definition of the embedding $H_e^\ast$ of $H^\ast$ in $\R^m$, see \eqref{def: embed}, leads to
\begin{align*}
H^\ast_e(\{i\}) &+ H^\ast_e(\{j\})-H^\ast_e(\{i,j\}) = 2e_{i,j}
\quad
\text{for $(i,j)\neq(1,2)$}.
\end{align*}
Therefore, every vector from $\Lat \boldsymbol{V}$ can be expressed in terms of elements of $\Lat H^\ast$, or equivalently $\Lat\boldsymbol{V}\subseteq \Lat H^\ast $.
For the reverse inclusion, following the same idea, consider what happens 
if
$\boldsymbol{X} = x e_{i}$
for $x$ from the support of $X$.
Then, for $(i,j)\neq(1,2)$,
\begin{align*}
\boldsymbol{V}(xe_i) &+ 
\boldsymbol{V}(xe_j)-
\boldsymbol{V}(x(e_i+e_j))
= x^2(H^\ast_e(\{i\}) + H^\ast_e(\{j\})-H^\ast_e(\{i,j\}) = 2x^2e_{i,j}.
\end{align*}
Therefore $2x^2e_{i,j}\in \supp{\boldsymbol{V}}$ for each $x \in \supp X$.
Moreover, as $x^2 H^\ast([1,1,\dots,1,0,\dots,0])$ also is in $\supp\boldsymbol{V}$, we obtain that $x^2\Lat H^\ast\subseteq\Lat \boldsymbol{V}$ for each $x \in \supp{X}$. Therefore, by the
fact that the gcd of the latter values is $1$ and Bézout's identity, $\Lat H^\ast \subseteq \Lat \boldsymbol{V} $, which along the previous inclusion gives that the two lattices do in fact coincide,
and a fortiori, from Lemma \ref{lem: H^ast}
\begin{equation}
    \label{eq: det general lattice }
    \labsrabs{\Lat \boldsymbol{V}} =
    \det \Lat H^\ast = 2^{m-n}.
\end{equation}
Our next step is to calculate
$\labsrabs{\Var(\boldsymbol{V})}$. For brevity, we will denote
\[
\alpha :=(i,j), \quad \beta = (k,\ell),
\quad
\text{and}
\quad
A_{\alpha} =A_{i,j}:= (X_i-X_j)^2.
\]
Then $\labsrabs{\Var(\boldsymbol{V})}$ is  the determinant of the matrix composed of 
\begin{equation}\label{eq: Cov(V)}
\begin{split}
\Cov(V_{\alpha},V_{\beta}) &= \Cov(A_{\alpha}- A_{1,2}, A_{\beta} -A
_{1,2})\\
&=\Cov(A_{\alpha}, A_{\beta}) - \Cov(A_{1,2}, A_{\alpha}) 
-\Cov(A_{1,2}, A_{\beta}) 
+\Cov(A_{1,2}, A_{1,2}).
\end{split}
\end{equation}
Since $X_i$ are iid, everything may be expressed in terms of the constants
\begin{equation}  
\label{def: C0}
C_0:= \Var(A_{1,2}) 
\quad
\text{and}
\quad
C_1:= \Cov(A_{1,2},A_{1,3}),
\end{equation}

with differences on the number of common elements between $\{i,j\}$ and $\{k,\ell\}$, and it is thus natural to recall the adjacency matrix $\cH$
of the line graph of the complete $n$-graph
$H$, which we introduced in 
\eqref{def: cH}. Its elements are
\[
h_{\alpha, \beta} = \begin{cases}
    1, & \text{if $|\alpha \cap \beta| = 1$};\\
    0, & \text{otherwise}.
\end{cases}
\]
 Therefore, equation \eqref{eq: Cov(V)} can be written as
\begin{align*}
\Cov(V_\alpha, V_\beta) =C_0\one_{\{\alpha = \beta\}} +C_1 h_{\alpha,\beta}- C_1 h_{(1,2),\alpha} - C_1h_{(1,2),\beta} + C_0.
\end{align*}

To put the last in matrix form, let $J$ be the $m\times m$ matrix, which have 1s in all columns except $(1,2)$, and $J_{1,2}$ be the matrix with 1s in the rows $\alpha$ with $|\alpha \cap \{1,2\}| = 1$ and 0s in all other entries, or visually
\begin{equation}
    \label{eq:J, J^-}
J := \begin{bNiceArray}{c|w{c}{4.5em}|w{c}{7.5em}}[margin]
  0 & {\one_{1\times(2n-4)}} &{\one_{1
\times (m-(2n-4)-1)}}  \\ 
  \hline
  {\mathbf{0}} &
  {\one} &{\one}  \\
  \hline
  {\mathbf{0}} &
  {\one} &{\one} 
\end{bNiceArray},
\quad
\text{and}
\quad
J_{1,2}:=\begin{bNiceArray}{c|w{c}{0.5em}|c}[margin]
  0 &{\mathbf{0}} &{\mathbf{0}}  \\ 
  \hline
  {\one}  &
  {\one} &{\one}  \\
  \hline
  {\mathbf{0}} &
   {\mathbf{0}}  & {\mathbf{0}}  
\end{bNiceArray}.
\end{equation}
Finally, for a matrix $M$ of dimension $m\times m$, we set $M^-$ to be $M$ after the removal of row $(1,2)$ and column $(1,2)$. With this notation, the above equality for individual covariances translates to
\begin{equation}
    \label{eq: Cov Matrix}
    \begin{split}
\Cov(\boldsymbol{V} ) &= C_0 I^- +C_1 
\cH^- -C_1J_{1,2}^--C_1(J_{1,2}^t)^-+C_0 J^{-} 
\\
&= C_0(I^{-}+J^-) +C_1(\cH^{-}-J^-_{1,2}-
J^t_{1,2}).
 \end{split}
\end{equation}
To work with $\mathcal{H}$ itself, and not $\mathcal{H}^-$, let us define the matrix
\begin{equation}
    \label{eq: detSigma}
\mathcal{A} := \begin{bNiceArray}{c|c}[margin]
  C_0 & C_0\one_{1
  \times (m-1)}  \\ 
  \hline
  \mathbf{0}_{(m-1)\times 1} & \text{Cov}(\boldsymbol{V})
\end{bNiceArray},
\quad
\text{so}
\quad
\det \cA = C_0  \labsrabs{\Var(\boldsymbol{V})}.
\end{equation}
Then, with the block structure of
\eqref{eq:J, J^-},
\[
\Hc = 
\begin{bNiceArray}{c|c|c}[margin]
  0            & \mathbf{0}   & \mathbf{0} \\ 
  \cline{1-3} % horizontal line only under the first column
  \one         & \Block{2-2}{\Hc^-} \\
  \cline{1-1} % another partial horizontal line only under column 1
  \mathbf{0}   &              &              \\
\end{bNiceArray},
\]
because the second row describes exactly coordinates where $H_{1,2}$ is non-zero, so equation \eqref{eq: Cov Matrix} gives that
\begin{equation}\label{eq: A Matrix}\mathcal{A} = 
C_0\lbrb{ I +
\begin{bNiceArray}{c|w{c}{0.5em}|c}[margin]
  0 & {\one} &{\one}  \\ 
  \hline
  {\mathbf{0}} &
  {\one} &{\one}  \\
  \hline
  {\mathbf{0}} &
  {\one} &{\one} 
\end{bNiceArray}
}
+ C_1\lbrb{\cH
-
\begin{bNiceArray}{c|w{c}{0.5em}|c}[margin]
  0 & {\boldsymbol{0}} &{\mathbf{0}}  \\ 
  \hline
  {\one} &
  {\one} &{\one}  \\
  \hline
  {\mathbf{0}} &
  {\boldsymbol{0}} &{\mathbf{0}} 
\end{bNiceArray}
- 
\begin{bNiceArray}{c|w{c}{0.5em}|c}[margin]
  0 & {\one} &{\mathbf{0}}  \\ 
  \hline
  {\boldsymbol{0}} &
  {\one} &{\boldsymbol{0}}  \\
  \hline
  {\mathbf{0}} &
  {\one} &{\mathbf{0}} 
\end{bNiceArray}
}
% =
% C_0\lbrb{ I +
% \begin{bNiceArray}{c|w{c}{0.5em}|c}[margin]
%   0 & {\mathbf{0}} &{\mathbf{0}}  \\ 
%   \hline
%   {\mathbf{0}} &
%   {\one} &{\one}  \\
%   \hline
%   {\mathbf{0}} &
%   {\one} &{\one} 
% \end{bNiceArray}
% }
% + C_1\lbrb{H - 
% \begin{bNiceArray}{c|w{c}{0.5em}|c}[margin]
%   0 & {\one} &{\mathbf{0}}  \\ 
%   \hline
%   {\one} &
%   {\boldsymbol{2}} &{\one}  \\
%   \hline
%   {\mathbf{0}} &
%   {\one} &{\mathbf{0}} 
% \end{bNiceArray}
% }
.
\end{equation}
% where the matrix and its blocks are symmetric and
% \begin{itemize}
%     \item The first line describes row $(1,2)$;
%     \item The second line describes rows $(1,2),(1,3), \dots, (2,3), (2,4), \dots, (2,n)$, that
%     is the positions where $\cH_{1,2}$ is non-zero;
%     \item The last line is for the remaining indices.
% \end{itemize}
% The only non-zero element in the first row and column
% of $\cA$ is in position $(12,12)$. Therefore
% \begin{equation}
%     \label{eq: detSigma}
% \det \cA = C_0 \det \Sigma.
% \end{equation}
We calculate $\det \cA$ by writing the matrix $\cA$ in a suitable basis, which is connected to the eigenstructure of $\cH$, described in Proposition \ref{prop: line Kn}. In a nutshell, Proposition \ref{prop: line Kn} 
provides that the vectors $\one$ 
and $H_i:=H(\{i\})$ for $i>1$ span  
a particular eigenspace of $\cH$. 
Next, because of our embedding,
see \eqref{eq: detSigma}, $e_{1,2}$ plays a special role,
and we are thus led to consider the set
\[
\mathcal{B}^\ast := 
\lbcurlyrbcurly{\one, e_{1,2}, H_2, H_3, \dots, H_n}.
\]
Note that,
\begin{equation}
    \label{eq: 2 one}
    2 \cdot \one = \sum_{i=1}^n H_i,
\end{equation}
because only $H_i$ and $H_j$ have non-zero coordinate along $e_{i,j}$ and
it is exactly 1. Moreover, from \eqref{eq: Hi+Hj - Hij},
\[\cH_{1,2}=H(\{1,2\}) = H_1 + H_2 - 2e_{1,2},\] the linear span of 
$\mathcal{B}^\ast$ coincides with the linear span of
\[
\mathcal{B} := 
\lbcurlyrbcurly{\one, \cH_{1,2}, H_2, H_3, \dots, H_n},
\]
which turns out to be more suitable for calculations.

First let us ensure that the vectors in $\mathcal {B}$ are linearly independent. 
Consider a linear combination such that
\[
 \alpha \one + \beta \cH_{1,2} + \sum_{i>1}\alpha_iH_i = \boldsymbol{0}.
\]
Considering specific coordinates, we get
\begin{align*}
    \text{along $e_{1,2}$}&:
    \alpha + \alpha_2 = 0\\
    \text{for $i>2$, along $e_{1,i}$}&: 
    \alpha + \beta + \alpha_i= 0
    \quad\qquad \,\,\hspace{0.1em}(\text{so $\alpha_i$ are equal for $i>2$)} 
    \\
    \text{along $e_{2,3}$}&:
    \alpha  + \beta + \alpha_2 +\alpha_3= 0
    \quad (\text{so $\alpha_2= \alpha=0$)} \\
     \text{for $i,j>2$, along $e_{i,j}$}&:
    \alpha  + \alpha_i +\alpha_j= 0,
 \end{align*}
which implies that all coefficients are zero, so the considered linear combination is trivial. We note the argument is correct if $n>3$. For $n=3$, $\labsrabs{\Var(\boldsymbol{V})}$  can be calculated by hand and matches the expression for $n>3$
from \eqref{eq: CovV}.

 We can further see that the orthogonal complement of $\cB$, which we denote by $\mathcal{B}^\perp$, consists of eigenvectors with eigenvalue $-2$ of $\cH$. 
Indeed, take a vector $v \in \mathcal{B}^\perp$. By construction it is orthogonal to $\one$,  $\cH_{1,2}$, and $H_i$ for $i>1$,  Further, again recalling that 
$2\cdot\one = \sum_{i=1}^nH_i$, $v$ is
also orthogonal to $H_1$, and since $\cH_{1,2} = H_1 + H_2 - 2e_{1,2}$, we get that $\langle v, e_{1,2}\rangle = 0$.

Therefore, denoting by $\cH_{i,j}$ the $(i,j)$th column of $\cH$,
\[
\langle \cH_{i,j}, v\rangle   = \langle H_i + H_j - 2e_{i,j}, v\rangle = \langle - 2e_{i,j}, v\rangle = -2 v_{i,j},
\]
which means that $\cH v = -2 v$, so substituting in 
\eqref{eq: Cov Matrix},
\[
\cA v = C_0 \lbrb{v + \langlerangle{v, \one -e_{1,2}}}
 + C_1 \lbrb{-2v-\langlerangle{v, \cH_{1,2}}\one - \langlerangle{v,\one}\cH_{1,2}}
 = (C_0 - 2C_1)v.
\]
Thus to calculate $\det \cA$, it remains to understand
the block corresponding to $\mathcal{B}$.

In Proposition \ref{prop: det cA} from the Appendix, we directly calculate the image of $\mathcal{B}$ under $\cA$ to get  explicitly this block, and then,
by exploiting its arrowhead structure, to compute that its determinant is equal to
\[
mC_0\lbrb{C_0-2C_1}^{m-n}\lbrb{C_0 + C_1(n-4)}^{n-1} .
\]
Accounting for the vectors in $\mathcal{B}^\perp$, whose dimension is $m-(n+1)$, we get that
\begin{equation}\label{eq: CovV}
\det \cA = mC_0 (C_0-2C_1)^{m-n}(C_0 + C_1(n-4))^{n-1}
\eqinfo{\eqref{eq: detSigma}} C_0 \labsrabs{\Var(\boldsymbol{V})},
\end{equation}
which provides $\labsrabs{\Var(\boldsymbol{V})}$. To obtain 
the expression from the statement of the theorem, see \eqref{eq: main asymp lattice}, a direct calculation in terms of $m_i:=\Ebb{X^i}$ shows that
 \[
C_0
= 2m_4 - 8m_3m_1 + 2m_2^2 + 8m_1^2 m_2 - 4m_1^4,
\quad \text{and}
\]
\[
C_1 
= m_4 - 4m_3m_1 - m_2^2 + 8m_1^2 m_2 - 4m_1^4,
\] 
so $C_0-2C_1 = 4(\Var X)^2$. Substituting along \eqref{eq: det general lattice } in \eqref{eq: Krafft_general}, we get that
\[
p_d \simi
 \frac{\labsrabs{\Lat \boldsymbol{V}}}{\sqrt{(2\pi d)^{m-1}\labsrabs{\Var(\boldsymbol{V})}}}
 =
\frac{2^{m-n}}{\sqrt{(2\pi d)^{m-1}m(4\Var X)^{2(m-n)}(4(\Var X)^2+C_1(n-2))^{n-1}}},
\]
which after simplification gives the desired \eqref{eq: main asymp lattice}.
\end{proof}

\subsection{The general case}\label{subsec: the general case}
Consider now the case where $X$ follows a uniform law over $\{0, 1,\sqrt{2},\sqrt{3}\}$. Again central for the problem is the behaviour of the vector constructed from  iid
  $X_1, \dots, X_n \sim X$
\[
\boldsymbol{V}:= (V_{i,j}) = ((X_i-X_j)^2 - (X_1-X_2)^2).
\]
  However, $\supp (X_i-X_j)^2 = \{0, 1,2, 3,3-2\sqrt{2}, 4-2\sqrt{3}, 5-2\sqrt{6}\}$ does not span a proper lattice. To address the latter, we may consider these real numbers as vectors with rational coordinates in a suitable Hamel basis of the linear span of $\supp (X_i-X_j)^2$
  over $\Qb$: for example in our example we may pick $1, \sqrt{2}, \sqrt{3}$, and $\sqrt{6}$ as a basis, and then identify $3-2\sqrt{2}$ with the (transposed) vector $(3,-2,0,0)$. 
  
  When we dealt with integer lattices,
  their fundamental volumes were calculated on the usual natural scale: that is the determinant of one of their bases written in terms of the standard basis of $\Qb^n$. In a similar manner, one may see that in our context, we will instead need to fix a basis of 
  reference: in this section  $\mathcal{X} =\{x_0=0,x_1,x_2,\dots,x_k\}$ will be a set of real numbers, whose motivation is in fact to describe $\supp X$. 
  Define the sets
 Let
\[
\mathcal{X}_1 = \{ 2x_i x_j \mid 1 \le i,j \le k \} \quad \text{and} \quad \mathcal{X}_2 = \mathcal{X}_1 \cup \{ x_i^2 \mid 0 \le i \le k \}.
\]
We denote by
\[
\mathcal{L} = \Span_{\mathbb{Q}}(\mathcal{X}_1)
\]
the vector space over \(\mathbb{Q}\) generated by \(\mathcal{X}_1\), and by $
\ell = \dim_{\mathbb{Q}}(\mathcal{L})
$
its dimension.
Note that $\Span_\Qb(\Xc_1) = \Span_\Qb(\Xc_2)$,
so we may choose a single reference basis for both. To this end, fix an isomorphism
$
\varphi \colon {\cal L} \to \mathbb{Q}^{\ell},
$
and define the reference basis \({\cal B}_{\text{ref}}\) of \({\cal L}\) as the pull-back via \(\varphi\) of the standard basis of \(\mathbb{Q}^{\ell}\).
With respect to this basis, let ${\cal B}_1$ be a basis for $\Lat \mathcal{X}_1$ and ${\cal B}_2$ be a basis for $\Lat \mathcal{X}_2$. All fundamental volumes in this section will be calculated on the scale of $\Bc_{ref}$, in the sense $|\Lat \Xc_i| = \det \Bc_i$.

In the next lemma, which is the generalisation of Theorem \ref{thm: lattice},
  we describe a lattice basis of $\Lat \boldsymbol{V}$ but considering the latter
   not as $(m-1)$-dimensional  with $m=\binom{n}{2}$ as before, but
  as $(m-1) \times \dim \supp(X_i-X_j)^2$-dimensional. 

 \begin{lemma}\label{lem: lattice_r}
Using the setup from the beginning of this section, let $H_X:\mathcal{X}^n\rightarrow \mathbb{R}^{m}$ be the function:
 \begin{equation*}
H_\mathcal{X}(\boldsymbol{x})=\sum_{i<j}(x_i-x_j)^2 e_{i,j}.
 \end{equation*}
Then the vectors 
\begin{itemize}
\item $u\otimes H([1:i])$, for $u \in \Bc_2$ and $i<n$,
 \item $v\otimes e_{i,j}$, for $v \in \Bc_1$ and $i<j<n$,
\end{itemize}
form  a basis of  $\Lat H_\mathcal{X} := \Lat \Im H_\mathcal{X}$ considered as a lattice on ${\cal L}\otimes \mathbb{Q}^m$.
 \end{lemma}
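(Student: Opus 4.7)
The plan is to tensor the structure from Theorem \ref{thm: lattice} against the scalars coming from $\mathcal{X}$, carefully separating the coefficients that live in $\Lat \mathcal{X}_1$ from those in $\Lat \mathcal{X}_2$. Since $(x_i-x_j)^2 = x_i^2 - 2x_ix_j + x_j^2 \in \Lat \mathcal{X}_2$, every entry of $H_\mathcal{X}(\boldsymbol{x})$ lies in $\Lat \mathcal{X}_2 \subseteq \mathcal{L}$, so $\Lat H_\mathcal{X}$ genuinely sits inside $\mathcal{L} \otimes \mathbb{Q}^m$, and the count $\ell(n-1) + \ell \binom{n-1}{2} = \ell m$ already matches the ambient dimension.

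First I would establish the inclusion ``proposed vectors $\subseteq \Lat H_\mathcal{X}$'' via two scalar-decorated analogues of the identities from Theorem \ref{thm: lattice}. The relation
\[
H_\mathcal{X}(xe_i) + H_\mathcal{X}(ye_j) - H_\mathcal{X}(xe_i + ye_j) = 2xy\, e_{i,j}
\]
(valid for any $x, y \in \mathcal{X}$ and $i \ne j$) yields $\Lat \mathcal{X}_1 \otimes e_{i,j} \subseteq \Lat H_\mathcal{X}$ for $i<j<n$. For the interval direction, applying $H_\mathcal{X}$ to the configuration that takes the constant value $x$ on $[1:k]$ and $y$ on $[k+1:n]$ gives $(x-y)^2 H([1:k])$; specialising $y=0$ and combining the resulting vectors produces both $x^2 H([1:k])$ and $2xy\, H([1:k])$, hence $\Lat \mathcal{X}_2 \otimes H([1:k]) \subseteq \Lat H_\mathcal{X}$ for $k<n$. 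Linear independence over $\mathbb{Q}$ is then immediate: Theorem \ref{thm: lattice} shows that $\{H([1:i])\}_{i<n} \cup \{e_{i,j}\}_{i<j<n}$ is $\mathbb{Q}$-linearly independent in $\mathbb{Q}^m$, and tensoring with the $\mathbb{Q}$-independent families $\mathcal{B}_1, \mathcal{B}_2 \subset \mathcal{L}$ preserves independence.

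The main obstacle is the spanning step, where an arbitrary $H_\mathcal{X}(\boldsymbol{x})$ must be expressed as a $\mathbb{Z}$-combination of the proposed basis with each scalar coefficient placed in the correct lattice. Partitioning $[1:n]$ into level sets $I_a = \{i : \boldsymbol{x}_i = x_a\}$, a direct expansion (after cancelling the same-level contributions) yields
\[
H_\mathcal{X}(\boldsymbol{x}) \;=\; \sum_{a} x_a^2\, H(I_a) \;-\; 2 \!\!\sum_{\substack{i<j\\ \boldsymbol{x}_i \ne \boldsymbol{x}_j}} \boldsymbol{x}_i \boldsymbol{x}_j\, e_{i,j}.
\]
Invoking \eqref{eq: vkl izkl} and \eqref{eq: interval} from Theorem \ref{thm: lattice}, each $H(I_a)$ decomposes as an integer combination of $H([1:i])$'s and $2 e_{k,\ell}$'s; multiplying by $x_a^2 \in \Lat \mathcal{X}_2$ produces $x_a^2 H([1:i])$ (directly in the basis) and $2 x_a^2 e_{k,\ell}$ with $2x_a^2 \in \Lat \mathcal{X}_1$ (in the basis after the $j=n$ reduction from Theorem \ref{thm: lattice}, now scaled by the relevant lattice element). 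The cross sum has coefficients $2\boldsymbol{x}_i\boldsymbol{x}_j \in \Lat \mathcal{X}_1$, handled identically.

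The hard part is precisely this lattice-level bookkeeping. The $H([1:k])$-direction genuinely requires the larger lattice $\Lat \mathcal{X}_2$, since single-value inputs $\boldsymbol{x} = x \mathbf{1}_{[1:k]}$ contribute pure $x^2$ with no factor of $2$; the $e_{i,j}$-direction, by contrast, always carries the factor $2$ from the cross term and therefore only needs $\Lat \mathcal{X}_1$. This dichotomy, combined with the $j=n$ reduction applied uniformly across all scalar factors, is what forces the distinct use of $\mathcal{B}_1$ and $\mathcal{B}_2$ in the statement.
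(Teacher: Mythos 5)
Your proposal is correct and follows essentially the same route as the paper: the same polarisation identities ($H_\mathcal{X}(xe_i)+H_\mathcal{X}(ye_j)-H_\mathcal{X}(xe_i+ye_j)=2xy\,e_{i,j}$ and its interval analogue) establish membership, spanning is obtained by expanding $H_\mathcal{X}(\boldsymbol{x})$ into $x^2$-weighted images of sets minus $2x_ix_j$-weighted $e_{i,j}$'s and reducing via the identities of Theorem \ref{thm: lattice}, and independence follows by tensoring the binary basis with $\Qb$-bases of $\mathcal{L}$. The only cosmetic difference is that you group coordinates by level sets $I_a$ where the paper expands over singletons $H(\{i\})$; the lattice bookkeeping you describe ($x_a^2\in\Lat\Xc_2$ on the interval directions, $2x_ix_j,\,2x_a^2\in\Lat\Xc_1$ on the $e_{i,j}$ directions, including after the $j=n$ reduction) is exactly what makes the paper's argument work.
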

\begin{proof}[Proof of Lemma \ref{lem: lattice_r}]
We start by introducing a suitable notation for describing vectors:
for a subset $I\subseteq [1:n]$ and $x_i, y \in \mathcal{X}$, denote by
\[
\lbrb{\lbrb{x_i}_{{i}\in I}; y}
\text{ the vector with entries $x_i$ at positions $i \in I$ and entries $y$ at all other positions.}
\]
Then in the style of calculations involving the overlapping map $H$,
one can directly check that, for $x,y \in \mathcal{X}$,
\begin{align*}
H_\mathcal{X}\lbrb{(x)_{\{i\}};0} +
H_\mathcal{X}\lbrb{(y)_{\{j\}};0} -
H_\mathcal{X}\lbrb{(x,y)_{\{i,j\}};0}
= 2xy e_{i,j}.
\end{align*}
Similarly, $H_\mathcal{X}\lbrb{(x)_{[1:i]};y} = (x-y)^2 H[1:i]$, so 
\[
H_\mathcal{X}\lbrb{(x)_{[1:i]};0} = x^2H([1:i]),
\quad\text{and} \quad
H_\mathcal{X}\lbrb{(x)_{[1:i]};0}
+ H_\mathcal{X}\lbrb{(y)_{[1:i]};0} - 
 H_\mathcal{X}\lbrb{(x)_{[1:i]};y} = 2xy H[1:i].
\]
Therefore the vectors from the theorem statement are in $\Lat H_\mathcal{X}.$
To see that they generate the entire lattice $\Lat H_\mathcal{X}$, by equation
\eqref{eq: interval}, we have
\begin{align*}
H_\mathcal{X}\lbrb{\boldsymbol{x}}
= \sum_{i<j}(x_i - x_j)^2e_{i,j} &= 
\sum_{i = 1}^n x_i^2 H\lbrb{\{i\}}
-\sum_{i<j}2x_ix_je_{i,j}\\
&=\sum_{i=1}^n x_i^2\lbrb{H[1:i] - H[1:i-1] + 2\sum_{k=1}^{i-1}e_{k,i}}+\sum_{i<j}2x_ix_je_{i,j}.
\end{align*}

To conclude the proof, it remains to be shown that the chosen vectors are linearly independent. However, we have already noted that $\Span_\Qb(\Xc_1) = \Span_\Qb(\Xc_2)=\mathcal{L}$,
and as we already know from Theorem \ref{thm: binary} that $H([1:i])$ for $i<n$ along with $e_{i,j}$ for $i<j<n$ are linearly independent in $\mathbb{Q}^m$, the results follows.
\end{proof}
Having described $\Lat H_\Xc$, we transfer this understanding to $\Lat \boldsymbol{V}$ as we did
in the transition from Theorem \ref{thm: lattice} to Lemma \ref{lem: H^ast}.
\begin{corollary}\label{cor:volumes}
Let $\mathcal{X}$, $\mathcal{X}_1$, $\mathcal{X}_2$ and ${\cal L}$ be as above. 
Denote by $H_\mathcal{X}^\ast:\mathcal{X}^n \rightarrow \mathbb{R}^{m-1}$ the mapping
\begin{equation*}
H_\mathcal{X}^*(\boldsymbol{x}) = \sum_{i<j} ((x_i-x_j)^2-(x_1-x_2)^2) e_{i,j}.
\end{equation*}
Then the set $\cB^\ast$ of vectors
\begin{itemize}
\item $u\otimes H^\ast([1:i])$, for $u \in \Bc_2$ and $i<n$,
 \item $v\otimes e_{i,j}$, for $v \in \Bc_1$ and for all $i<j$ such that $2<j<n$,
\end{itemize}
forms  a basis of  $\Lat \Im H^\ast_\mathcal{X}$. Moreover,
the fundamental volume of the lattice $\Lat \Im H^*_X$ is
\begin{equation*}
|\Lat \Im H^*_X| = 
(\det {\cal B}_1)^{m-n}(\det {\cal B}_2)^{n-1} = |\Lat \Bc_1|^{m-n}\cdot 
|\Lat \Bc_2|^{n-1}.
\end{equation*}
\end{corollary}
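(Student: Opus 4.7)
The strategy is to lift the derivation of Lemma~\ref{lem: H^ast} from Theorem~\ref{thm: lattice} to the tensor-product setting of Lemma~\ref{lem: lattice_r}, by embedding $H^\ast_\Xc$ into $\Lc\otimes\R^m$. First I would define
\[
H^{\ast,\Xc}_e(\boldsymbol{x}) := H_\Xc(\boldsymbol{x}) - c_{(1,2)}(\boldsymbol{x})\otimes \boldsymbol{1},
\]
where $c_{(1,2)}(\boldsymbol{x})\in\Lc$ is the $(1,2)$-coefficient of $H_\Xc(\boldsymbol{x})$, so that $H^\ast_\Xc$ is recovered from $H^{\ast,\Xc}_e$ by dropping the identically zero $(1,2)$-coordinate. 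Because this embedding is a linear map $L$ on $\Lc\otimes\R^m$, applying $L$ to any basis of $\Lat H_\Xc$ produces a generating set for $\Lat\Im H^{\ast,\Xc}_e$, and hence (after reversing the embedding) for $\Lat\Im H^\ast_\Xc$.

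Starting from the basis of $\Lat H_\Xc$ given by Lemma~\ref{lem: lattice_r}, I would modify it by substituting, for each $v\in\Bc_1$, the vector $v\otimes e_{1,2}$ with $v\otimes \boldsymbol{1}$. To justify, expand $v\otimes \boldsymbol{1}=\sum_{i<j}v\otimes e_{i,j}$: the terms with $j<n$ already belong to the old basis, and each $v\otimes e_{i,n}$ is re-expressed via~\eqref{eq: interval}, which is valid because $\Bc_1\subseteq\Lat\Xc_1\subseteq\Lat\Xc_2$ places every $v\otimes H([1:i])$ in the lattice. This writes $v\otimes \boldsymbol{1}$ as $v\otimes e_{1,2}$ plus an integer combination of the other unchanged basis vectors, so the substitution is a unimodular change of basis. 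Applying $L$ now, the vectors $u\otimes H([1:i])$ become $u\otimes H^\ast([1:i])$ (trivially for $i\geq 2$, and via $H^\ast([1:1])=H([1:1])-\boldsymbol{1}$ for $i=1$), the vectors $v\otimes e_{i,j}$ with $(i,j)\neq(1,2)$, $j<n$ are unchanged, and the $\ell$ inserted vectors $v\otimes \boldsymbol{1}$ all collapse to $\mathbf{0}$. The surviving vectors form precisely $\cB^\ast$, so $\cB^\ast$ generates $\Lat \Im H^\ast_\Xc$; a count gives $|\cB^\ast|=\ell(n-1)+\ell(m-n)=\ell(m-1)=\dim(\Lc\otimes\R^{m-1})$, and linear independence follows by projecting onto each fixed reference coordinate and invoking Lemma~\ref{lem: H^ast}.

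For the fundamental volume, I would partition the $m-1$ coordinates into Group~A $=\{(i,j): i<j<n,\ (i,j)\neq(1,2)\}$ (of size $m-n$) and Group~B $=\{(i,n): 1\leq i\leq n-1\}$ (of size $n-1$), and list the basis vectors in the order Type~A $=\{v\otimes e_{i,j}\}$ first, then Type~B $=\{u\otimes H^\ast([1:i])\}$. The resulting block matrix is lower triangular: the upper-left $\ell(m-n)\times\ell(m-n)$ block is itself block-diagonal with $m-n$ copies of the $\ell\times\ell$ matrix representing $\Bc_1$ in $\Bc_{\mathrm{ref}}$, contributing $(\det\Bc_1)^{m-n}$; and the lower-right $\ell(n-1)\times\ell(n-1)$ block has the Kronecker form $\Omega\otimes B_2$, where $\Omega$ is the $(n-1)\times(n-1)$ matrix whose entry in row $k$ and column $i$ is $H^\ast([1:k])_{(i,n)}$. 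The only nontrivial remaining computation is $\det\Omega$, which I would evaluate by performing consecutive row subtractions $R_k\leftarrow R_k-R_{k-1}$ for $k\geq 3$ to reach a near-triangular form, then eliminating the anomalous $k=1$ row carrying the $-1$'s that stem from the embedding identity $H^\ast([1:1])=H([1:1])-\boldsymbol{1}$; the outcome is $\det\Omega=1$. Multiplying the two block contributions yields the asserted $(\det\Bc_1)^{m-n}(\det\Bc_2)^{n-1}$. The main delicate point throughout is the handling of the $k=1$ row of $\Omega$, since that is where the $(1,2)$-embedding manifests itself and where the computation could easily be derailed by sign errors.
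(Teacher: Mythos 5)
Your proposal is correct and takes essentially the same route as the paper: it transfers the binary-case basis of Lemma~\ref{lem: H^ast} to the tensor setting via the $(1,2)$-embedding/substitution trick, and then reads off the fundamental volume from a block-lower-triangular matrix with $m-n$ diagonal blocks $\Bc_1$ and $n-1$ diagonal blocks $\Bc_2$, exactly as in the paper's proof. The differences are only cosmetic: your explicit computation $\det\Omega=1$ plays the role of the paper's unit-lower-triangular scalar matrix $M_1$.
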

\begin{proof}[Proof of Corollary \ref{cor:volumes}]
The arguments are in essence the same as those used to generalise Theorem
\ref{thm: lattice} to Lemma \ref{lem: lattice_r}: from Lemma \ref{lem: H^ast}
we know that $H^\ast([1:i])$ and $2e_{i,j}$ form a basis of $\Lat H^\ast$ and note that $H^\ast(\{i\}) = H(\{i\})$ for
for $i>2$ and $H^\ast(\{i\}) = H(\{i\})-\one$ for $i=1,2$. Therefore
\begin{align*}
H^\ast_\mathcal{X}(\boldsymbol{x}) &=
\sum_{i=1}^n x_i^2 H(\{i\}) - \sum_{i<j}2x_ix_je_{i,j}
-(x_1-x_2)^2 \one\\
&=\sum_{i=1}^n x_i^2 H^\ast(\{i\}) + (x_1^2 +x_2^2)\one-
\sum_{i<j}2x_ix_je_{i,j}
-(x_1-x_2)^2\one\\
&=\sum_{i=1}^n x_i^2 H^\ast(\{i\}) -
\sum_{i<j}(2x_ix_j-2x_1x_2)e_{i,j}.
\end{align*}
Next, as in the previous proof, equation \eqref{eq: interval} will give us a representation in terms of the vectors in $\cB^\ast$.
Therefore $\Lat \Im H_X^*\subset\Lat \cB^*$. The argument for the reverse inclusion is the same as in the one-dimensional case. This completes the proof of the first part of the corollary.

For the second part, let us again order the vectors in the convenient block structure as in the proof of Lemma \ref{lem: H^ast}, see
\eqref{eq:matrix M}: the  $(m-1)\times (m-1)$ matrix $M$
defined by
\begin{equation*}
M_{(j,i)}=e_{i,j} \text{ if } i<j<n, \quad
M_{(n,i)}=H([1:i+1]) \text{ for } i<n-1,
\quad
\text{and}
\quad 
M_{(n,n-1)}=\one-H([1])
\end{equation*}
has block representation
\[
M :=
\begin{bNiceArray}{c|c}[margin]
 I_{(m-n)\times(m-n)} & \boldsymbol{0} \\
 \hline
 \ast                & M_1
\end{bNiceArray},
\]
so taking a matrix representation of ${\cal B}_1$ and ${\cal B}_2$, the basis $\cB^*$ can be written in matrix form as
\begin{equation*}
B^*= 
\begin{bNiceArray}{c|c}[margin]
 {\cal B}_1 \otimes I_{(m-n)\times(m-n)} & \boldsymbol{0}\\
 \hline
 \ast                & {\cal B}_2 \otimes M_1
\end{bNiceArray}.
\end{equation*}
Since $M$ is lower diagonal, with 1s along the diagonal, the structure of $B^*$ is block-lower-diagonal with blocks of size $\ell \times \ell$ and by the above representation it has $(m-n)$ blocks ${\cal B}_1$ along the diagonal and $(n-1)$ blocks ${\cal B}_2$ along the diagonal. Consequently
\begin{equation*}
|\Lat \Im H_\mathcal{X}^*|=\det B^* = (\det {\cal B}_1)^{m-n} (\det {\cal B}_2)^{n-1}=
\labsrabs{\Lat \Xc_1}^{m-n}
\cdot\labsrabs{\Lat \Xc_2}^{n-1}.
\end{equation*}  
\end{proof}
To state our final main result, let us introduce 
the following notation for element-wise product:  
for a vector $\boldsymbol{x}=(x_i)$, $\boldsymbol{x}^{\circ 2} :=(x_i^2)$, and similarly for 
a matrix $A = (a_{i,j})$,
$A^{\circ 2} := (a^2_{i,j})$.
\begin{theorem}
    \label{thm: general}
 Let $\boldsymbol{X}_1,  \dots, \boldsymbol{X}_n$ be $d$-dimensional vectors
 whose entries are iid
 samples from a random variable $X$
 with finite  support $\Xc := \supp X$.      
 Let 
$p_d$ be the probability that these vectors are equidistant.
    
 Define the sets
 \begin{equation*}
 \mathcal{X}_1=\{2xy\,|
x, y\in \Xc\}, \text{ and } \mathcal{X}_2= \mathcal{X}_1\cup \{x^2\,|x \in \Xc\},
 \end{equation*}
 and denote by
 $\ell$ the dimension of the vector space over $\Qb$ generated by $\Xc_1$.

% , and
%  \[
%  G := \gcd \lbcurlyrbcurly{x_1 - x_2: x_1,x_2 \in \supp X
%  \text{ and } x_1\neq x_2}. (\text{actually irrelevant but I'll
%  use it elsewhere})
%  \]
Considering
$X$ as a vector
$\boldsymbol{Y}$
in $\Qb^\ell$, pick iid copies 
$\boldsymbol{Y}_{\! 1},
\boldsymbol{Y}_2,
\boldsymbol{Y}_3\sim \boldsymbol{Y}$. Then, with \[
m = \binom{n}{2},
\quad
\text{and}
\quad
\boldsymbol{C}_{\! 1} :=  \mathrm{Cov}((\boldsymbol{Y}_{\! 1} - \boldsymbol{Y}_{\! 2})^{\circ 2}, (\boldsymbol{Y}_{\! 1}-\boldsymbol{Y}_{\! 3})^{\circ 2}) = \Var\lbrb{\lbrb{\boldsymbol{Y}-\Ebb{\boldsymbol{Y}}}^{\circ2}},
\]
we have that as $d \to \infty$,
 \begin{equation}
     \label{eq: general asymp}
 p_d \simi
\frac{1}{d^{\ell(m-1)/2}}\cdot
\frac{
|\Lat \Xc_1|^{m-n}|\Lat \Xc_2|^{n-1}
}
{2^{m-n}
\sqrt{
m^\ell(2\pi)^{\ell(m-1)}\cdot|(\Var\boldsymbol{Y})^{\circ 2}|^{(m-n)}\cdot|4(\Var \boldsymbol{Y})^{\circ 2}+(n-2)\boldsymbol{C}_{\!1}|^{n-1}}
}.
% \sqrt{\frac{1}{2^{n-1}m(\pi\Var(X))^{m-1}(4\Var(X)+(n-2)C_1)^{n-1}}},
 \end{equation}
\end{theorem}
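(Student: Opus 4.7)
The strategy extends that of Theorem~\ref{thm: general lattice} by embedding coordinates of $\boldsymbol{V}$ into the $\ell$-dimensional $\mathbb{Q}$-vector space $\mathcal{L}$. Each $V_{i,j}^{(\ell)} = (X_i^{(\ell)}-X_j^{(\ell)})^2 - (X_1^{(\ell)}-X_2^{(\ell)})^2$ lies in $\mathcal{L}$, so after fixing the reference basis $\Bc_{\text{ref}}$ the random vector $\boldsymbol{V}_\ell$ becomes $\mathbb{Q}^{\ell(m-1)}$-valued and the equidistance event reads $\sum_{\ell=1}^d \boldsymbol{V}_\ell = \boldsymbol{0}$. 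Applying the Krafft local CLT of \eqref{thm: Krafft} in this ambient dimension gives
\[
p_d \simi \frac{|\Lat \boldsymbol{V}|}{\sqrt{(2\pi d)^{\ell(m-1)}\,|\Var(\boldsymbol{V})|}},
\]
and Corollary~\ref{cor:volumes}, applied to the map $H^\ast_\Xc$ which shares the $\mathbb{Z}$-span of $\boldsymbol{V}$, supplies $|\Lat\boldsymbol{V}| = |\Lat\Xc_1|^{m-n}\cdot|\Lat\Xc_2|^{n-1}$, reducing the task to the evaluation of $|\Var(\boldsymbol{V})|$.

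Setting $A_{i,j} := (\boldsymbol{Y}_{\!i}-\boldsymbol{Y}_{\!j})^{\circ 2}$, the iid structure yields the matrix-valued identity $\Cov(A_\alpha,A_\beta) = \delta_{\alpha,\beta}\boldsymbol{C}_0 + h_{\alpha,\beta}\boldsymbol{C}_1$, where $h_{\alpha,\beta}$ is the $(\alpha,\beta)$ entry of the line-graph adjacency matrix $\cH$ from \eqref{def: cH} and $\boldsymbol{C}_0 := \Var(A_{1,2})$. Expanding $V_\alpha = A_\alpha - A_{1,2}$ as in \eqref{eq: Cov(V)} and assembling blockwise as in \eqref{eq: Cov Matrix} produces the Kronecker-product representation
\[
\Cov(\boldsymbol{V}) = \boldsymbol{C}_0\otimes M_0 + \boldsymbol{C}_1\otimes M_1,
\]
where $M_0 := I^- + J^-$ and $M_1 := \cH^- - J_{1,2}^- - (J_{1,2}^t)^-$ are exactly the scalar $(m-1)\times(m-1)$ matrices from the lattice proof.

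The determinant is evaluated by lifting the scalar argument of Theorem~\ref{thm: general lattice}. The augmented pair $(M_0',M_1')$ from \eqref{eq: detSigma} inherits the common invariant decomposition $\R^m = \mathcal{B}\oplus\mathcal{B}^\perp$: on $\mathcal{B}^\perp$ both matrices act as scalars $(1,-2)$, while on $\mathcal{B}$ the arrowhead analysis of Proposition~\ref{prop: det cA} exhibits simultaneous eigenvalue pairs $(m,0)$, $(1,-2)$, and $(1,n-4)$ (of multiplicity $n-1$). Simultaneously triangularising $(M_0',M_1')$ then turns $\boldsymbol{C}_0\otimes M_0' + \boldsymbol{C}_1\otimes M_1'$ into a block upper-triangular matrix whose diagonal blocks are $\alpha\boldsymbol{C}_0 + \beta\boldsymbol{C}_1$; multiplying block determinants and cancelling the augmentation factor $|\boldsymbol{C}_0|$ yields
\[
|\Var(\boldsymbol{V})| = m^\ell\,|\boldsymbol{C}_0 - 2\boldsymbol{C}_1|^{m-n}\,|\boldsymbol{C}_0 + (n-4)\boldsymbol{C}_1|^{n-1}.
\]
A direct coordinate-wise moment computation, mimicking the scalar identity $C_0-2C_1 = 4(\Var X)^2$ from the remark following Theorem~\ref{thm: general lattice}, gives $\boldsymbol{C}_0 - 2\boldsymbol{C}_1 = 4(\Var\boldsymbol{Y})^{\circ 2}$, so $\boldsymbol{C}_0 + (n-4)\boldsymbol{C}_1 = 4(\Var\boldsymbol{Y})^{\circ 2} + (n-2)\boldsymbol{C}_1$; substituting into the CLT formula and simplifying delivers \eqref{eq: general asymp}.

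The main obstacle is the determinant step above: because $\boldsymbol{C}_0$ and $\boldsymbol{C}_1$ generally do not commute, one cannot tensor eigenvectors of the $\boldsymbol{C}_i$ with those of $(M_0,M_1)$. The rescue is that $M_0$ and $M_1$ themselves admit a simultaneous triangular form, inherited from the eigenstructure of $\cH$ (Proposition~\ref{prop: line Kn}) together with the arrowhead decomposition of Proposition~\ref{prop: det cA}; after conjugating only the $(m-1)\times(m-1)$ factor of the Kronecker product, the scalar determinant formula lifts verbatim to the matrix setting, with the scalar eigenvalues $(\alpha,\beta)$ reappearing as the coefficients in matrix-valued blocks $\alpha\boldsymbol{C}_0 + \beta\boldsymbol{C}_1$.
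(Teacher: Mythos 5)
Your outline coincides with the paper's proof up to the crucial determinant evaluation: the local CLT in dimension $\ell(m-1)$ as in \eqref{eq: Krafft_real}, the identification $|\Lat \boldsymbol{\widetilde{V}}|=|\Lat\Xc_1|^{m-n}|\Lat\Xc_2|^{n-1}$ via Corollary \ref{cor:volumes}, the tensor form $\boldsymbol{C}_0\otimes A_1+\boldsymbol{C}_1\otimes A_2$ of \eqref{eq: A Matrix_general}, and the final identity $\boldsymbol{C}_0-2\boldsymbol{C}_1=4(\Var\boldsymbol{Y})^{\circ 2}$ are all the paper's steps. The gap is in your ``rescue'' of the non-commutativity problem: the augmented pair $(M_0',M_1')=(A_1,A_2)$ is \emph{not} simultaneously triangularisable, so the block upper-triangular form with diagonal blocks $\alpha\boldsymbol{C}_0+\beta\boldsymbol{C}_1$ into which you propose to conjugate does not exist. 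Indeed, if it existed, then comparing $\det(C_0A_1+C_1A_2)=mC_0(C_0-2C_1)^{m-n}(C_0+(n-4)C_1)^{n-1}$ (Proposition \ref{prop: det cA}) with unique factorisation in $\R[C_0,C_1]$, and using $\mathrm{spec}(A_1)=\{m,1\}$ (note $A_1=I+J$ with $J$ of rank one), every eigenvalue of $A_2$ would have to lie in $\{0,-2,-2m,\,n-4,\,(n-4)m\}$, in particular be rational. But on the invariant subspace spanned by $\cB=\{\one,\cH_{1,2},H_2,\dots,H_n\}$ the action of $A_2$, read off from the $C_1$-coefficients in \eqref{eq: calc A} (namely $A_2\one=-m\cH_{1,2}$, $A_2\cH_{1,2}=-(2n-2)\cH_{1,2}-(n-4)\sum_{i>2}H_i$, $A_2H_2=-(n-4)\one-(n-1)\cH_{1,2}+(n-4)H_2$, $A_2H_i=-(n-1)\cH_{1,2}+(n-4)H_i$ for $i>2$), has for $n=5$ the characteristic polynomial $-\lambda(1-\lambda)^3(\lambda^2+7\lambda-20)$, hence the irrational eigenvalues $(-7\pm\sqrt{129})/2$. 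So no common triangular form with the claimed pairs (nor any at all) exists, and the central determinant step of your proof is unsupported as stated.

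The identity you need, $|\Var(\boldsymbol{\widetilde{V}})|=m^\ell|\boldsymbol{C}_0-2\boldsymbol{C}_1|^{m-n}|\boldsymbol{C}_0+(n-4)\boldsymbol{C}_1|^{n-1}$, is nevertheless true, and the paper proves it in Proposition \ref{prop: det cA^ast} without any triangularisation: the restriction of $\cA^\ast$ to $\cB_R\otimes\cB$ is an $(n+1)\times(n+1)$ block matrix whose blocks are of the form $\alpha\boldsymbol{C}_0+\beta\boldsymbol{C}_1$, and the scalar computation of Proposition \ref{prop: det cA} is repeated verbatim at block level because every operation used there (cofactor expansion along a block row with a single nonzero block, adding a \emph{scalar} multiple of a block column to another, extracting the scalar $m$ from a block column, and the determinant formula for a block-triangular matrix) involves only scalar multipliers, so no commutativity of $\boldsymbol{C}_0$ and $\boldsymbol{C}_1$ is needed; the factor $|\boldsymbol{C}_0-2\boldsymbol{C}_1|^{m-n-1}$ from $\R^\ell\otimes\cB^\perp$ is obtained exactly as you describe. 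If you insist on a ``lifting'' argument in your spirit, it can be repaired by working with the unaugmented \emph{symmetric} pencil: $M_0=I+\one\one^{t}$ is positive definite and $M_1$ is symmetric, so there is $S$ with $S^{t}M_0S=I$ and $S^{t}M_1S=D$ diagonal; applying the congruence $I_\ell\otimes S$ to $\Var(\boldsymbol{\widetilde{V}})$ produces diagonal blocks $\boldsymbol{C}_0+d_k\boldsymbol{C}_1$ with generalized eigenvalues $d_k=-2$ ($m-n$ times) and $d_k=n-4$ ($n-1$ times), while determinants change by $\det(S)^{2\ell}=\det(M_0)^{-\ell}=m^{-\ell}$. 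That, however, is a congruence of the symmetric covariance matrix, not a similarity of the augmented non-symmetric pair, which is precisely where your argument breaks.
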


\begin{remark}
It would be more satisfactory in the last result if we could link in an explicit way $|\Lat \Xc_1|$ and
$|\Lat \Xc_2|$. In the one-dimensional case, consider WLOG that $X$ takes integer values. Then we can take the numbers $2$ and 1 as bases of, respectively, $\Xc_1$ and $\Xc_2$, so $|\Lat \Xc_1| = 2|\Lat \Xc_2| = 2$, which
recovers the result of Theorem \ref{thm: general lattice}, see \eqref{eq: main asymp lattice}. However, in general it turns out that the link between the two fundamental volumes is more subtle. In the special case where every $x^2$ for $x
\in \supp X$ is rational (or, more generally, when the support elements are of the form $s\sqrt{t}$ with $t$ square–free), we show in Lemma~\ref{lemma:rational_squares} that
\[
|\Lat\Xc_2|=2|\Lat\Xc_1|.
\]
Furthermore, in some other special cases we describe a characterisation of the ratio of the two volumes in terms of the Smith normal form of an integer matrix representation of
 the vectors in $\Xc_1$ and $\Xc_2$, see Lemma \ref{lemma:z2_discription}.
\end{remark}

\begin{proof}
In the style of 
\eqref{eq: Krafft_general}, and writing $\boldsymbol{\widetilde{V}}$ for the embedded in
the proper $\ell\times\dim \boldsymbol{V}$-dimensional space vector $\boldsymbol{V}$, we again have
  \begin{equation}
        \label{eq: Krafft_real}
  p_d =   \P \lbrb{\sum_{k=1}^d \boldsymbol{\widetilde{V}}_{\! k} = \mathbf{0}} \simi \frac{|\Lat \boldsymbol{\widetilde{V}}|}{\sqrt{(2\pi d)^{\ell(m-1)}|\Var(\boldsymbol{\widetilde{V}})|}}.
 \end{equation}
First, Corollary \ref{cor:volumes} gives us the desired denumerator, $|\Lat \boldsymbol{\widetilde{V}}| =
|\Lat \Xc_1|^{m-n}|\Lat \Xc_2|^{n-1}$. Second, for the covariance matrix, it is straightforward to check that
the counterpart of
\eqref{eq: Cov Matrix} 
holds with
tensor product.  Then, with the constants, from the theorem statement, that is, with iid
$\boldsymbol{Y}_{\! i} \sim X$, 
\[
\boldsymbol{C}_{\!0}:= \lbrb{\Var\lbrb{
\boldsymbol{Y}_{\! 1} - \boldsymbol{Y}_{\! 2}
}}^{\circ 2},
\quad
\text{and}
\quad
\boldsymbol{C}_{\!1}:= \Cov\lbrb{
\boldsymbol{Y}_{\! 1} -\boldsymbol{Y}_{\! 2}
,
\boldsymbol{Y}_{\! 1}- \boldsymbol{Y}_{\! 3}},
\]
and
\begin{equation}\label{eq: A Matrix_general}\mathcal{A^\ast} = 
\boldsymbol{C}_{\! 0}\otimes\lbrb{ I +
\begin{bNiceArray}{c|w{c}{0.5em}|c}[margin]
  0 & {\one} &{\one}  \\ 
  \hline
  {\mathbf{0}} &
  {\one} &{\one}  \\
  \hline
  {\mathbf{0}} &
  {\one} &{\one} 
\end{bNiceArray}
}
+ \boldsymbol{C}_{\! 1}\otimes\lbrb{\cH
-
\begin{bNiceArray}{c|w{c}{0.5em}|c}[margin]
  0 & {\boldsymbol{0}} &{\mathbf{0}}  \\ 
  \hline
  {\one} &
  {\one} &{\one}  \\
  \hline
  {\mathbf{0}} &
  {\boldsymbol{0}} &{\mathbf{0}} 
\end{bNiceArray}
- 
\begin{bNiceArray}{c|w{c}{0.5em}|c}[margin]
  0 & {\one} &{\mathbf{0}}  \\ 
  \hline
  {\boldsymbol{0}} &
  {\one} &{\boldsymbol{0}}  \\
  \hline
  {\mathbf{0}} &
  {\one} &{\mathbf{0}} 
\end{bNiceArray}
}
=:\boldsymbol{C}_{\! 0}\otimes A_1 +
\boldsymbol{C}_{\! 1}\otimes A_2
,
\end{equation}
we have $
\det {\cA^\ast} = 
\det \boldsymbol{C}_{\! 0} |\Var(\boldsymbol{\widetilde{V}})|$. Generalising the approach from the univariate
case,
in Proposition
\ref{prop: det cA^ast} we calculate $\det \Var(\cA^\ast)$,
which gives
\[
|\Var(\boldsymbol{\widetilde{V}})| = 
m^\ell\labsrabs{\boldsymbol{C}_{\!0}-2\boldsymbol{C}_{\!1}}^{m-n}\labsrabs{\boldsymbol{C}_{\!0} + (n-4)\boldsymbol{C}_{\!1}}^{n-1} .
\]
Moreover,
a direct calculation shows that $
\boldsymbol{C}_{\! 0} - 2 \boldsymbol{C}_{\! 1} = 4 (\Var\boldsymbol{Y})^{\circ 2},
$
so substituting in
\eqref{eq: Krafft_real},
we obtain the asymptotics in the general case
\eqref{eq: general asymp}.
\end{proof}

\subsubsection{Further understanding of the involved lattices}

In this section, we will be interested in the ratio $|\Lat \mathcal{X}_1|/|\Lat \mathcal{X}_2|$. Although, we do not have a good understanding of this ratio for all cases, we present two cases where such relation is relatively easy to obtain. The first one is where the elements of $\Xc$ are square roots of a rational number: in Lemma~\ref{lemma:rational_squares} we show that in this scenario, $|\Lat \Xc_1|=2|\Lat \Xc_2|$.

 The second case captures a more general class of lattices but requires a deeper understanding of the specific representations of the elements from $\mathcal{X}_1$ and $\mathcal{X}_2$. In very general terms, it requires an integer representation of the vectors in $\mathcal{X}_2$. Then by simple scaling with appropriate powers of $2$ we can transform the basis in such a way that we get an integer matrix $A$ representing the vectors in $\mathcal{X}_2$ such that every column of $A$ contains an odd number. Provided that $A$ considered over $\mathbb{Z}_2$ is of \emph{full rank}, in Lemma~\ref{lemma:z2_discription} we show that 
 \[|\Lat \mathcal{X}_1| = 2^{\ell-r}|\Lat \mathcal{X}_2|,\]
 where $\ell$ is the rank of $A$ over $\mathbb{Z}_2$, and $r$ is the rank of the submatrix of $A$ that represents $\{2x_ix_j\,|\, i\neq j\}$ again in $\mathbb{Z}_2$.
  
\begin{lemma}\label{lemma:rational_squares}
In the notation laid at the beginning of Section \ref{subsec: the general case}, assume that for all $i$, $|x_i|=\sqrt{q_i}$ for some non-negative rational numbers $q_i\in \Qb$. Then, $
|\Lat \Xc_1|=2|\Lat \Xc_2|$, and thus in this case the asymptotics from \eqref{eq: general asymp} reads off as 
\[
 p_d \simi
\frac{1}{d^{\ell(m-1)/2}}\cdot
\frac{
|\Lat \Xc_2|^{m-1}
}
{
\sqrt{
m^\ell(2\pi)^{\ell(m-1)}\cdot|(\Var\boldsymbol{Y})^{\circ 2}|^{(m-n)}\cdot|4(\Var \boldsymbol{Y})^{\circ 2}+(n-2)\boldsymbol{C}_{\!1}|^{n-1}}
}.
\]
\end{lemma}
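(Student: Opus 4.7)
The plan is to deduce the lemma from Theorem~\ref{thm: general} by establishing the single volume identity $|\Lat \Xc_1| = 2|\Lat \Xc_2|$; substituting this into \eqref{eq: general asymp} collapses the lattice factor as
\[
\frac{|\Lat \Xc_1|^{m-n}|\Lat \Xc_2|^{n-1}}{2^{m-n}} = \frac{(2|\Lat \Xc_2|)^{m-n}|\Lat \Xc_2|^{n-1}}{2^{m-n}} = |\Lat \Xc_2|^{m-1},
\]
which is exactly the stated asymptotic.

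To prove $|\Lat \Xc_1| = 2|\Lat \Xc_2|$, I would first write each $x_i = y_i\sqrt{d_i}$ with $d_i$ a positive square-free integer and $y_i \in \Qb\setminus\{0\}$; rescaling all $x_i$ by a common factor scales both lattices equally, so I may assume $y_i \in \Zb$. I would then fix a Hamel basis of $\mathcal{L}$ over $\Qb$ of the form $\{1\}\cup\{\sqrt{e} : e \in E\}$, where $E$ is a set of square-free integers greater than $1$. The central observation is that every generator of $\Xc_1$ either lies purely in the rational component (namely $2q_i$, and $\pm 2\sqrt{q_iq_j}$ when $d_i = d_j$) or purely in a single $\sqrt{e}$-component (when $d_i \neq d_j$); and the generators added to form $\Xc_2$, the $q_i = y_i^2 d_i$, are all rational. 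Writing $L_s := \Lat \Xc_s$ for $s \in \{1,2\}$, both lattices therefore decompose as direct sums $L_s = L_s^{\mathrm{rat}} \oplus L_s^{\mathrm{irr}}$ with $L_1^{\mathrm{irr}} = L_2^{\mathrm{irr}}$, reducing the problem to computing the index $[L_2^{\mathrm{rat}} : L_1^{\mathrm{rat}}]$.

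For the rational parts, I would group indices by square-free part: set $I_d := \{i : d_i = d\}$ and $g_d := \gcd_{i \in I_d} y_i$. A prime-by-prime check gives the elementary identity $\gcd_{i,j \in I_d}(y_i y_j) = g_d^2$, whence
\[
L_1^{\mathrm{rat}} = \sum_{d} 2d \cdot \Zb\langle y_i y_j : i,j \in I_d\rangle = 2\gcd\nolimits_d(d g_d^2)\,\Zb,
\]
while appending $\Zb\langle y_i^2 d_i : i\rangle = \gcd_d(d g_d^2)\,\Zb$ gives $L_2^{\mathrm{rat}} = \gcd_d(d g_d^2)\,\Zb$, so the index is exactly $2$.

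The step requiring most care is the direct-sum splitting of $L_s$: it is valid only because no generator of $\Xc_2$ mixes the rational and the irrational Hamel-basis components, which is precisely where the hypothesis $|x_i| = \sqrt{q_i}$ enters essentially. Without it, a typical product $x_ix_j$ could spread across several $\sqrt{e}$-directions simultaneously, and the clean $2$-adic bookkeeping above would have to be replaced by the more delicate Smith-normal-form analysis alluded to in Lemma~\ref{lemma:z2_discription}.
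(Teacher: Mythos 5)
Your proof is correct and follows essentially the same route as the paper's: reduce to $x_i=s_i\sqrt{t_i}$ with $t_i$ square-free, use the $\Qb$-linear independence of distinct surds to isolate the rational component (your direct-sum splitting is the paper's ``projection onto $\Qb$''), and then a gcd computation on the rational generators gives the factor $2$ --- your identity $\gcd_{i,j\in I_d}(y_iy_j)=g_d^2$ playing the role of the paper's $\gcd(x_i^2,x_ix_j,x_j^2)=\gcd(x_i^2,x_j^2)$. Only note that the existence of your Hamel basis $\{1\}\cup\{\sqrt{e}:e\in E\}$ rests precisely on the independence of square roots of distinct square-free integers, which the paper records as Proposition~\ref{prop: rational_independence} and which you should cite or prove rather than assume silently.
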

\begin{proof}
Let $q_i=a_i/b_i$ with $a_i, b_i\in \mathbb{Z}$ and let $b=b_1 b_2 \dots b_k$. Then multiplying all $x_i$ by $b$ preserves the ratio of the desired volumes, whereas $bq_i=b'_i \sqrt{a_ib_i}$ with $b'_i=b/b_i$ is a positive integer.
Therefore, it suffices to consider the case where all $x_i$ are in the special form $x_i=s_i\sqrt{t_i}$  with $s_i,t_i\in \mathbb{N}$, and all $t_i$ are square free. Note that in the case where $x_ix_j\in \mathbb{N}$, $\gcd(x_i^2,x_ix_j,x_j^2)=gcd(x_i^2,x_j^2)$ and consequently $\gcd(2x_i^2,2x_ix_j,2x_j^2)=2\gcd(x_i^2,x_j^2)$. Therefore,
\begin{equation}\label{eq: 2 times}
\gcd\lbrb{\{2x_i^2\,|\, i\le k\} \cup \{2x_ix_j\,|\, x_ix_j\in \mathbb{N}\}}=2\gcd(\{x_i^2\,|\, i\le k\}),
\end{equation} 
so in order to conclude the proof, it suffices to notice that if $t_i> 1$ are distinct square-free positive integers, then if for some integers $s'_i$
\begin{equation}\label{rational_independence}
\sum_{i=1}^{k} s_i' \sqrt{t_i}\in \mathbb{Q},
\quad
\text{ then all $
s_i'=0$}.
\end{equation}
 The last Galois-type result seems to be well-known in the field of algebra, however for completeness, a short proof is available in Proposition \ref{prop: rational_independence} of the Appendix. 
Therefore, the set $\{2x_ix_j\,|\, x_ix_j\not \in \mathbb{N}\}$ spans a linear subspace of ${\cal L}$ that is orthogonal to $\boldsymbol{1}$ -- the vector that is collinear with all rational numbers.
Hence, the ratio of the fundamental volumes of the lattices $\Lat \mathcal{X}_1$ and $\Lat \mathcal{X}_2$ is equal to the ratio of their respective projections on $\mathbb{Q}$ (as a linear subspace of ${\cal L}$), and thus as we have seen above in \eqref{eq: 2 times} is equal to $2$, as claimed in the lemma.
\end{proof}

We continue with the second scenario that we present. Let  $v_i$ and $u_{i,j}$ be the coordinates in
$\cB_{ref}$ of, respectively, $x_i^2$ and $2x_ix_j$ for $x_i,x_j \in \Xc$. 
Then, leaving out the representations of $0=x_0^2 = 2x_0x_i$,  we define the rational matrix $A$ of size $k^2\times \ell$ by
\begin{equation}
    \label{eq: def UV}
A \;:=\;
\begin{bNiceMatrix}
  \text{--- }u_1\text{ ---} \\
  \vdots \\
  \text{--- }u_k\text{ ---} \\
  \text{--- }v_{1,2}\text{ ---} \\
  \vdots \\
  \text{--- }v_{k-1,k}\text{ ---}
\end{bNiceMatrix}
\;=:\;
\begin{bNiceMatrix}[hvlines,margin]
  U \in M_{k \times \ell}(\Zb) \\
  {V\in M_{(k^2-k) \times \ell}(\Zb)}
\end{bNiceMatrix},
\end{equation}
that is, with rows $A_i=v_i$ for $i\le k$ and $A_{ik+j}=u_{i,j}$ for $i\neq j$. 

Scaling the elements of $A$ with an appropriate constant, say the $\lcm$ of the denominators of the entries of $A$, we may assume that $A$ is actually integer-valued. Note that this transformation corresponds to scaling the standard basis of $\mathbb{Q}^{\ell}$ by a constant, which is an invariant operation for the purposes of our problem.

Next, let $\rho_{s}$ be the maximal power of $2$ that divides all the entries in the $s$th for $s\leq \ell$ column of $A$, that is, for $s\le \ell$,
\begin{equation*}
\rho_{s}:=\max \{\rho\in \mathbb{N} :  2^{\rho} \mid A_{i,s} \text{ for all } i\le k^2\}.
\end{equation*}
Then further scaling the $s$th column of $A$ by $2^{-\rho_s}$, we effectively scale the $s$th dimension of the basis, so that the event in which we are interested remains the same.

The described transformations show that we can assume that in each column of $A$ there is at least one odd number. Now we can state the second case for which we can say something about the ratio $|\Lat \Xc_1|/|\Lat \Xc_2|$ in terms of ranks of ${U}$ and ${V}$ from \eqref{eq: def UV}.
For this reason, for any matrix $M$ with integer entries, we define $M \pmodsmall{2}$ to be the matrix obtained by reducing each entry of $M$ modulo 2.
\begin{lemma}\label{lemma:z2_discription}
Fix the notation laid at the beginning of Section \ref{subsec: the general case} and $A,U,V$ as above.
Assume that $A \pmodsmall{2}$ has full rank, $\ell$, over $\mathbb{Z}_2$; and that
$V \pmodsmall{2}$ has rank $r$. Then $|\Lat \Xc_1|=2^{\ell-r} |\Lat \Xc_2|$ and further
\begin{equation*}
|\Lat \Im H^*_X| = 
 |\Lat \Xc_1|^{m-n}\cdot 
|\Lat \Xc_2|^{n-1}
= 2^{(m-n)(r-\ell)} |\Lat \Xc_2|^{m-1}.
\end{equation*}
\end{lemma}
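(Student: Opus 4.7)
My plan is to identify the index $[\Lat \mathcal{X}_2 : \Lat \mathcal{X}_1]$ with a dimension count in the $\mathbb{Z}_2$-vector space $\Lat \mathcal{X}_2 / 2\Lat \mathcal{X}_2$, which the full-rank hypothesis on $A \pmodsmall{2}$ forces to be canonically $\mathbb{Z}_2^\ell$. Two containments come first. On one side, $\mathcal{X}_1 \subseteq \mathcal{X}_2$ gives $\Lat \mathcal{X}_1 \subseteq \Lat \mathcal{X}_2$ directly. On the other, the identity $2 x_i^2 = 2 x_i \cdot x_i \in \mathcal{X}_1$ shows that doubling each ``square'' generator $v_i$ of $\Lat \mathcal{X}_2$ lands in $\Lat \mathcal{X}_1$, while the cross-term generators $u_{i,j}$ are already in $\Lat \mathcal{X}_1$ by definition. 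Hence $2\Lat \mathcal{X}_2 \subseteq \Lat \mathcal{X}_1 \subseteq \Lat \mathcal{X}_2$, so $\Lat \mathcal{X}_2 / \Lat \mathcal{X}_1$ is annihilated by $2$, hence is a $\mathbb{Z}_2$-vector space.

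Next I would apply the short exact sequence
\[
0 \longrightarrow \Lat \mathcal{X}_1 / 2\Lat \mathcal{X}_2 \longrightarrow \Lat \mathcal{X}_2 / 2\Lat \mathcal{X}_2 \longrightarrow \Lat \mathcal{X}_2 / \Lat \mathcal{X}_1 \longrightarrow 0.
\]
The middle term is isomorphic to $\mathbb{Z}_2^\ell$: it has $\mathbb{Z}_2$-dimension $\ell$ since $\Lat \mathcal{X}_2$ is a free $\mathbb{Z}$-module of rank $\ell$, and the natural reduction map $\Lat \mathcal{X}_2 / 2\Lat \mathcal{X}_2 \to \mathbb{Z}^\ell / 2\mathbb{Z}^\ell$ is surjective by the full-rank hypothesis on $A \pmodsmall{2}$, hence an isomorphism of $\ell$-dimensional $\mathbb{Z}_2$-spaces. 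Under this identification the leftmost term is the $\mathbb{Z}_2$-span of the mod-$2$ reductions of the generators of $\Lat \mathcal{X}_1$: the doubled-square generators $2v_i$ reduce to $0$, while the remaining generators are exactly the rows of $V$, whose reduction has $\mathbb{Z}_2$-rank $r$ by hypothesis. Consequently $\dim_{\mathbb{Z}_2}(\Lat \mathcal{X}_2 / \Lat \mathcal{X}_1) = \ell - r$, so $[\Lat \mathcal{X}_2 : \Lat \mathcal{X}_1] = 2^{\ell - r}$, and since the index coincides with the ratio of fundamental volumes I conclude $|\Lat \mathcal{X}_1| = 2^{\ell - r}|\Lat \mathcal{X}_2|$.

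Plugging this into Corollary \ref{cor:volumes} then gives
\[
|\Lat \Im H_\mathcal{X}^*| = |\Lat \mathcal{X}_1|^{m-n}\,|\Lat \mathcal{X}_2|^{n-1} = 2^{(m-n)(\ell - r)}|\Lat \mathcal{X}_2|^{m-1},
\]
which is the remaining assertion. The main obstacle in writing this up cleanly is not the counting itself but the book-keeping around the preliminary normalisations of $A$ performed just before the lemma statement: clearing denominators, and dividing each column by its largest power of $2$. Both operations are coordinatewise rescalings of $\mathcal{B}_{\mathrm{ref}}$, and I would need to verify explicitly that they leave the ratio $|\Lat \mathcal{X}_1|/|\Lat \mathcal{X}_2|$ and both $\mathbb{Z}_2$-ranks $\ell$ and $r$ invariant, justifying the reduction to the integer, column-primitive setting in which the mod-$2$ short exact sequence above makes sense.
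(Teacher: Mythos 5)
Your argument is correct, but it takes a genuinely different route from the paper. The paper proves $|\Lat\Xc_1|=2^{\ell-r}|\Lat\Xc_2|$ by running the Smith normal form algorithm on $A$ and on $\widehat{A}=\bigl(\begin{smallmatrix}2U\\ V\end{smallmatrix}\bigr)$ in parallel and tracking the stage-wise gcds: since $\rank(\widehat{A}\pmodsmall{2})=r$, the first $r$ gcds are odd and agree with those of $A$, while each of the remaining $\ell-r$ stages picks up a factor $2$, so the product of elementary divisors (the fundamental volume) changes by $2^{\ell-r}$. You instead compute the index $[\Lat\Xc_2:\Lat\Xc_1]$ directly: the chain $2\Lat\Xc_2\subseteq\Lat\Xc_1\subseteq\Lat\Xc_2$ (using $2x_i^2=2x_i\cdot x_i\in\Xc_1$), the identification $\Lat\Xc_2/2\Lat\Xc_2\cong\Zb_2^{\ell}$, and the observation that the image of $\Lat\Xc_1$ there is the row space of $V\pmodsmall{2}$ give $2^{\ell}=2^{r}\cdot[\Lat\Xc_2:\Lat\Xc_1]$, hence the index $2^{\ell-r}$ and the volume ratio. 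Both arguments use the hypotheses in the same way, but yours makes the role of the full-rank assumption on $A\pmodsmall{2}$ more transparent (it is exactly what lets the ambient rank $r$ of $V\pmodsmall{2}$ compute the intrinsic dimension of $\Lat\Xc_1/2\Lat\Xc_2$), and it avoids the somewhat delicate bookkeeping of ``applying the same elementary operations'' to two different matrices in the SNF recursion; the paper's approach, on the other hand, is algorithmic and yields the elementary divisors themselves.

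Two small remarks. First, your closing worry about the preliminary normalisations is not a real obstacle: the ratio $|\Lat\Xc_1|/|\Lat\Xc_2|$ is invariant under any rescaling of $\cB_{\mathrm{ref}}$ (both volumes scale by the same determinant), and the lemma's rank hypotheses are stated for the already normalised integer matrix $A$, so you may simply work in that setting. Second, your final exponent $2^{(m-n)(\ell-r)}$ is the one consistent with the first claim $|\Lat\Xc_1|=2^{\ell-r}|\Lat\Xc_2|$ (and with the integer case, where it must reproduce $2^{m-n}$); the exponent $(m-n)(r-\ell)$ displayed in the lemma statement is a sign typo.
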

\begin{proof}
Recall that $V$, and respectively $V\pmodsmall{2}$, corresponds to representations of $2x_ix_j$  with $i\neq j$, and define the
matrix
\[
\widehat{A} \;:=\;
\begin{bNiceMatrix}[hvlines,margin]
  {2U} \\
  {V}
\end{bNiceMatrix},
\quad
\text{so}
\quad
\widehat{A} \pmodsmall{2}\;=\;
\begin{bNiceMatrix}[hvlines,margin]
  \boldsymbol{0} \\
  {V\pmodsmall{2}}
\end{bNiceMatrix}.
\]
Therefore $\widehat{A}$ represents the set $\mathcal{X}_1:= \{2x_ix_j: 1\leq i,j \leq n\}$ in the same basis as $A$ represents $\mathcal{X}_2:=\{
x_i^2:1\leq i \leq n
\} \cup 
\Xc_1$. Furthermore, by the conditions of the lemma, $\rank({A} \pmodsmall{2})=\ell$ and $\rank(\widehat{A} \pmodsmall{2})=r$.

Let us introduce the following notation: for a set of rows $I$ and a set of columns $J$ of a matrix $M$, let $M_{I,J}$  be the submatrix of $M$ determined by the rows $I$ and columns $J$. Then, it is straightforward that
\begin{equation*}
\gcd\lbrb{\widehat{A}_{I,J}} \in \lbcurlyrbcurly{2\gcd(A_{I,J}),\gcd(A_{I,J})}.
\end{equation*}
Furthermore if the matrix $\widehat{A}_{I,J}$ contains an odd element, then $\gcd(\widehat{A}_{I,J})=\gcd(A_{I,J})$. 

To prove that the fundamental volume defined by the lattice $\widehat{A}$ is $2^{\ell-r}$ times the volume defined by $A$ we consider how the following algorithm transforms an integer matrix $A\in M_{k^2,\ell}(\mathbb{Z})$ in Smith's normal form:
\begin{enumerate}
\item Using elementary matrix operations (that is add a row/column to a row/column and switch of rows/columns) transform $A=A(0)$ to a matrix:
\begin{equation*}
    \widetilde{A}:=
\begin{bNiceArray}{c|c}[hvlines, margin]
  \gcd(A) & \boldsymbol{0}\\
  \boldsymbol{0}       & A(1)
\end{bNiceArray}
.
\end{equation*}
\item Apply step 1 to $A{(1)}$ recursively.
\item the fundamental volume of $A$ is then the product of the non-zero elements of $A{(\ell-1)}$, that is, $\prod_{j=0}^{\ell-1}\gcd(A(j))$.
\end{enumerate}
Note that applying the same elementary operations to a matrix $A\pmodsmall{2}$ preserves its rank and maintains the invariant $(A\pmodsmall{2})(j)=A(j)\pmodsmall{2}$. 

     Now it is straightforward that $\gcd(\widehat{A}(j))=\gcd(A(j))$ for $j< r$ because the rank of $\widehat{A}\pmodsmall{2}$ is $r$, and thus the greatest common divisors in the first $r$ rounds will be odd. On the other hand, in the subsequent $\ell-r$ rounds $j$, $\gcd(\widehat{A}(j))=2\gcd({A}(j))$, so the fundamental volume of $\widehat{A}$ is $2^{\ell-r}$ times the fundamental volume of $A$.

\end{proof}
\appendix
\section{Appendix}\label{sec: appn}

\subsection*{The line graph of the complete graph}
For a graph $G(V,E)$, its line graph 
$L(G)$ is the graph with vertex set $E$, and such that two of its
vertices are connected if
the respective edges in $G$ have 
a point in common.

Let $K_n$ be the complete graph on
$n$ vertices. Its line graph
$L(K_n)$ is also known as the \emph{triangular} graph, or also
the Johnson graph $J(n,2)$. The eigenvalues of line graphs $L(G)$ are
easily obtained from the eigenvalues of $G$, see e.g.
\cite[Proposition 1.4.1]{Brouwer-2012}. However, we have not found an
explicit description of the respective eigenvectors, which we shortly present in the following proposition.
\begin{proposition} \label{prop: line Kn} Denote with $m:=\binom{n}{2}$.
 The eigenvalues of the triangular graph $L(K_n)$ are $(2n-4)^1, (n-4)^{n-1}$, and $(-2)^{m-n}$, where the superscripts denote multiplicities. Moreover,
 the respective eigenspaces have bases
 \begin{itemize}
     \item the vector of 1s, which we denote with $\one$;
     \item $(n-1)$ distinct vectors from the type $H_i-\frac{2}{n}\one$, with $H_i:=H(\{i\})$ defined via \eqref{def: overlap};
     \item $(m -n)$ vectors which are in bijection with a basis of the even-length cycles of $K_n$ through the map
     \[
\text{a cycle } (i_1, \dots, i_{2k},i_1)\in V^{2k+1} \mapsto
    e_{i_1i_2}- e_{i_2i_3}+\dots+ e_{i_{2n-1}i_{2n}} - e_{i_{2n}i_1} \in \R^m
     \]
     Moreover, it is possible to construct a basis of this space which contains $e_{12}-e_{23}+e_{34}-e_{14}$ and $(n-1)$ other vectors, orthogonal to $e_{1,2}$.
 \end{itemize}
\end{proposition}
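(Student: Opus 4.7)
The proof naturally splits according to the three eigenvalues $2n-4$, $n-4$ and $-2$. For the first two, I would proceed by direct verification; for the third, the plan is to exploit the classical factorisation linking the adjacency matrix of a line graph to the vertex–edge incidence matrix of the base graph.

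For the eigenvalue $2n-4$: the graph $L(K_n)$ is regular of degree $2(n-2)$, since each edge $\{i,j\}$ of $K_n$ shares a vertex with exactly the $2(n-2)$ edges $\{i,k\}$ and $\{j,k\}$ with $k\notin\{i,j\}$. Hence $\one$ is an eigenvector with eigenvalue $2n-4$. For the eigenvalue $n-4$, I would verify the identity
\[
\cH H_i = (n-4)\, H_i + 2\,\one
\]
by a case split on $\alpha=\{a,b\}$: if $i\in\alpha$ the indices $\beta$ contributing to $(\cH H_i)_\alpha$ are those $\beta=\{i,\ell\}$ with $\ell\notin\{a,b\}$, giving $n-2$; if $i\notin\alpha$ necessarily $\beta\in\{\{i,a\},\{i,b\}\}$, giving $2$. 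A rearrangement then shows that $H_i-\tfrac{2}{n}\one$ is an eigenvector with eigenvalue $n-4$. The identity $\sum_{i=1}^n H_i = 2\one$ from \eqref{eq: 2 one} implies that these $n$ vectors span a space of dimension exactly $n-1$, and any $n-1$ of them are linearly independent.

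For the eigenvalue $-2$, the plan is to use the factorisation $\cH + 2I = N^T N$, where $N\in\{0,1\}^{n\times m}$ is the unsigned vertex–edge incidence matrix of $K_n$. The $-2$-eigenspace of $\cH$ then coincides with $\ker N$; since $K_n$ is connected and non-bipartite for $n\geq 3$, $\rank N = n$ and therefore $\dim \ker N = m-n$, matching the expected multiplicity. For an even cycle $(i_1,\dots,i_{2k},i_1)$ in $K_n$, the alternating vector $\sum_{j=1}^{2k}(-1)^{j+1}e_{i_j\,i_{j+1}}$ belongs to $\ker N$, because at every vertex traversed by the cycle the two incident cycle-edges carry opposite signs, whereas at non-traversed vertices no cycle-edge is incident. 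To close the argument I would extract $m-n$ linearly independent such vectors—for instance by taking the $\binom{n-2}{2}$ vectors $v_{ij}:=e_{1i}-e_{2i}+e_{2j}-e_{1j}$ arising from the $4$-cycles $(1,i,2,j)$ for $3\leq i<j\leq n$, and completing with $n-3$ further cycle vectors selected through a spanning-tree argument on $K_n\setminus\{\{1,2\}\}$, with linear independence checked by tracking the "leading" coordinates $e_{2j}$.

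For the last assertion, concerning the basis containing $e_{12}-e_{23}+e_{34}-e_{14}$ together with the remaining vectors orthogonal to $e_{1,2}$, the plan is to carry out a Gram–Schmidt-style substitution on the cycle basis constructed above: any cycle vector that uses the edge $\{1,2\}$ can be replaced by its difference with a suitable scalar multiple of the distinguished vector, which preserves membership in the $-2$-eigenspace while annulling the $e_{1,2}$ coordinate. The main obstacle lies precisely in this last step, namely identifying an explicit cycle family that is simultaneously spanning, linearly independent and compatible with the orthogonality constraint. Though the dimension $m-n$ is secured by the rank computation, the explicit combinatorial construction amounts to the cycle-space theory for the unsigned incidence matrix of a non-bipartite graph and requires careful bookkeeping; a cleaner abstract alternative would be to work in $\ker N$ via its orthogonal complement $\mathrm{im}(N^T)$, but the cycle description is more transparent for the application in Theorem \ref{thm: general lattice}.
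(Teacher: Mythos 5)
Your route is essentially the paper's: the incidence factorisation ($\cH+2I=N^TN$, the paper's $M^tM=\cH+2I_m$), direct verification that $\one$ and $H_i-\frac{2}{n}\one$ are eigenvectors for $2n-4$ and $n-4$, and identification of the $(-2)$-eigenspace with alternating vectors of even cycles (you via $\ker N$, the paper by computing $\cH$ on a cycle vector through $\cH_{i,j}=H_i+H_j-2e_{i,j}$; these are equivalent, and your multiplicity count via $\rank N=n$ matches the paper's use of the nonsingularity of $MM^t=J_n+(n-2)I_n$). Two small points: the independence of any $n-1$ of the vectors $H_i-\frac{2}{n}\one$ does not follow from $\sum_i H_i=2\one$ alone and needs the short coordinate check the paper gives (looking at the entries $e_{i,n}$); and the rank fact for the unsigned incidence matrix of a connected non-bipartite graph should be cited or proved, though it is standard.

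The genuine gap is your explicit cycle family for the $(-2)$-eigenspace. The vectors $v_{ij}=e_{1,i}-e_{2,i}+e_{2,j}-e_{1,j}$ coming from the $4$-cycles $(1,i,2,j)$ satisfy $v_{ij}=w_i-w_j$ with $w_k:=e_{1,k}-e_{2,k}$, so they span a space of dimension only $n-3$; concretely $v_{34}-v_{35}+v_{45}=0$, so already for $n\ge 5$ they are nowhere near $\binom{n-2}{2}$ independent vectors, and no completion by $n-3$ further cycles can reach dimension $m-n$. Since exhibiting $m-n$ independent even-cycle vectors is precisely the step where the proposition needs substance (the paper defers it to Doob's theorem and the fundamental-cycle sketch of Remark \ref{rem: Doob}), this step as written fails. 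A repair in the same spirit: take the $4$-cycles \emph{through the edge} $\{1,2\}$, i.e. $(1,2,i,j)$ for $3\le i<j\le n$, with vectors $e_{1,2}-e_{2,i}+e_{i,j}-e_{1,j}$; each is the unique member with a nonzero coordinate at $e_{i,j}$, so they are independent, and the choice $(i,j)=(3,4)$ is exactly the distinguished vector $e_{1,2}-e_{2,3}+e_{3,4}-e_{1,4}$. Complete with the $n-3$ cycles $(1,3,2,j)$, $4\le j\le n$, whose vectors $e_{1,3}-e_{2,3}+e_{2,j}-e_{1,j}$ are the only remaining ones with a nonzero coordinate at $e_{2,j}$. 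This yields $\binom{n-2}{2}+(n-3)=m-n$ independent even-cycle vectors, after which your substitution trick (subtracting the distinguished vector to kill the $e_{1,2}$ coordinate of the other family members) does give the basis claimed in the final bullet.
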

\begin{remark}
    \label{rem: Doob} As we will shortly see from the proof of the last proposition, the
    eigenvalue $(-2)$ is characteristic
    for all line graphs. In particular, for a connected graph with  $n$ vertices and $m$ edges, $(-2)$ is an eigenvalue of multiplicity $m-r$. The work of \cite{Doob-1973} characterises the associated with
    $(-2)$ eigenvectors in terms
    of the dendroid of the matroid of $G$, which in the specific case of $K_n$ gives the characterisation in terms of the even cycles of $K_n$.

    If one wants to build an explicit basis the space of even cycles, it is possible to
    adapt
    the
    concept of \emph{Fundamental Cycles}, 
   see e.g. \cite{Gross-Yellen-Anderson-2019}[Section 4.5]: consider a connected graph $G$. Its cycle space $C(G)$ is
    the subset of the edge space $E$ consisting all cycles of $G$ and all unions of edge-disjoint cycles. 
If $T$ is a spanning tree of $G$, one may construct
the \emph{Fundamental system of cycles, associated of T} as the
$|E|-|V|+1$ number of cycles which are obtained by connecting each edge which is not in $T$ with the spanning tree itself.

In the context of Proposition \ref{prop: line Kn}, we are interested only in cycles of even length. We can proceed as in \cite{Doob-1973}[Proposition 3.1] by first selecting only the even cycles from a fundamental cycle basis. Then, fix an odd reference cycle $C_B$. For each other odd cycle $C$, we can construct a basis element by first traversing $C_B$, then following a path in the spanning tree $T$ to $C$, traversing $C$, and finally returning to $C_B$.

    \end{remark}
        \begin{proof}
        [Proof of Proposition \ref{prop: line Kn}]
Consider the incidence matrix $M $ of $ K_n $. This is an $n \times m$ matrix, such that $ M_{v,e} = 1 $ if vertex $v$ is incident to edge $ e $, and $ 0 $ otherwise. Denoting by $A(K_n)$
the adjacency matrix of $K_n$, with $J_n$ the matrix of 1s, and with $I_n$ the identity, one can check that
$MM^t = A(K_n) = J_n + (n-2)I_n$, which has eigenvalues $(2n-2)^{1}$ and $(n-2)^{n-1}$.
However, $M^tM = \cH+ 2I_m$, and it is a standard fact form linear algebra that the eigenvalues of 
$MM^t$ and $M^tM$ coincide up to the the addition of $0^{m-n}$. Therefore the eigenvalues of $H$
are $(2n-4)^1, (n-4)^{n-1},$ and $(-2)^{m-n}$ as
claimed in the proposition.

Let us construct respective eigenvectors.
For $(2n-2)^{1}$, it is clear that $H \one = (2n-4)\one$. 

For $(n-2)^{n-1}$, we have from \eqref{eq: Hi+Hj - Hij} and a direct calculation (the representation from Figure \ref{fig: ij} may be helpful) that
\[
H_i+H_j-\cH_{i,j} = 2e_{ij},
\quad
\text{and}
\quad
\langle H_i, H_j\rangle = 
1 + (n-2)\ind{i=j}
=\begin{cases}
    n-1 & \text{if } i=j\\
    1 & \text{otherwise}
\end{cases}.
\]
Therefore $(\cH H_i)_{k,\ell} = \langle H_{k} + H_\ell-2e_{k,\ell} , H_{i}\rangle = 2 + (n-2-2)\ind{\text{$k$ or $\ell$ is equal to $i$}}$, which in 
vector form means that 
\begin{equation}
    \label{eq: HHi}
\cH H_i = 2\cdot\one + (n-4)H_i,
\end{equation}
so 
\[
\cH \lbrb{H_i - \frac{2}{n}\one} = 2\cdot\one + (n-4)H_i - \frac{2}{n}(2n-4)\one = (n-4)\lbrb{H_i - \frac{2}{n}\one},
\]
giving us indeed that $H_i - \frac{2}{n}\one$ is an eigenvector
with eigenvalue $(n-4)$. We know from the first part of the proposition that $(n-4)$ has multiplicity $(n-1)$, so it remains to show that any $(n-1)$ vectors of type $H_i - \frac{2}{n}\one$ are linearly independent. Indeed, let us argue for $H_1, \dots, H_{n-1}$. Assume that for some coefficients $\alpha_i$, 
\[
\sum_{k=1}^{n-1} \alpha_k \lbrb{ H_k - \frac{2}{n}\one } = \mathbf{0},
\quad
\text{which implies that}
\quad
\sum_{k=1}^{n-1} \alpha_k H_k = \left( \frac{2}{n} \sum_{k=1}^{n-1} \alpha_k \right) \one.
\]
However, only $H_i$ has a non-zero coordinate along $e_{i,n}$, so $\alpha_i = \frac{2}{n} \sum_{k=1}^{n-1} \alpha_k$, and this easily leads to all $\alpha_i$ being zero, ensuring linear independence.

For the last part, for an even length cycle of $K_n$, $(i_1, \dots, i_{2k},i_1)$, we have that
\begin{align*}
\cH( e_{i_1,i_2}- e_{i_2,i_3}&+\dots - e_{i_{2k},i_1})
=
\cH_{i_1,i_2} - \cH_{i_2,i_3} + \dots -
\cH_{i_{2k,1}} \\
&\quad= H_1 + H_2 - 2e_{1,2} - (H_2 + H_3 - 2e_{2,3}) + \dots - (H_{2k} + H_1-2e_{2k,1})\\
&\quad=- 2\lbrb{e_{i_1,i_2}- e_{i_2,i_3}+\dots+ e_{i_{2k-1},i_{2k}} - e_{i_{2k},i_1}}.
    \end{align*}
It remains to confirm that the dimension of
linear span of these type of vectors is $m-n$,
which can be seen by the construction of basis
sketched in Remark \ref{rem: Doob}.
    \end{proof}

     \section*{Auxiliary results}
     \begin{proposition}
         \label{prop: det cA}
The determinant of the matrix $\cA$, defined in \eqref{eq: detSigma} is equal to
\[
\det \cA = mC_0\lbrb{C_0-2C_1}^{m-n}\lbrb{C_0 + C_1(n-4)}^{n-1} .
\]
     \end{proposition}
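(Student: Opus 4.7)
The plan is to exploit the decomposition $\R^m = V \oplus W$ already introduced in the proof of Theorem~\ref{thm: general lattice}, where $V = \Span(\mathcal{B})$ has dimension $n+1$ and $W = \mathcal{B}^{\perp}$ has dimension $m-n-1$. That proof established both that $W$ is $\cA$-invariant and that $\cA|_{W} = (C_0 - 2C_1)\,I$. Denoting by $M$ the matrix (in the basis $\mathcal{B}$) of the induced endomorphism $v\mapsto P_V(\cA v)$ of $V$, where $P_V$ is the projection onto $V$ parallel to $W$, one thus has
\[
\det\cA \;=\; (C_0 - 2C_1)^{m-n-1}\,\det M,
\]
reducing the problem to evaluating the $(n+1)\times(n+1)$ determinant $\det M$.

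I would assemble $M$ column by column by applying $\cA$ to each vector of $\mathcal{B}=\{\one,\cH_{1,2},H_2,H_3,\dots,H_n\}$. All the necessary ingredients are present in the main text: formula \eqref{eq: A Matrix}, the eigenrelations $\cH\one=(2n-4)\one$ and $\cH H_i = 2\one + (n-4)H_i$ from \eqref{eq: HHi}, the identity $\cH_{1,2}=H_1+H_2-2e_{1,2}$, and \eqref{eq: 2 one}. The only column that escapes $\mathcal{B}$ at first sight is $\cA\cH_{1,2}$, since $\cH\cH_{1,2}$ produces a stray multiple of $e_{1,2}$; this is reabsorbed using
\[
e_{1,2} \;=\; \one - \tfrac12\,\cH_{1,2} - \tfrac12\sum_{i=3}^{n} H_i,
\]
which follows at once from $2\cdot\one=\sum_{i=1}^{n}H_i$ and $\cH_{1,2}=H_1+H_2-2e_{1,2}$. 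The $S_{n-2}$-symmetry in the indices $\{3,\dots,n\}$ then guarantees that $\cA H_i$ has the same shape for every $i\ge 3$, and a direct check shows that the $H_2$-coordinate of both $\cA\one$ and $\cA\cH_{1,2}$ vanishes. Setting $\lambda := C_0+(n-4)C_1$, the matrix $M$ consequently has the block form
\[
M \;=\; \begin{bmatrix} A & \mathbf{u}\,\one^{T} \\ \one\,\mathbf{v}^{T} & \lambda\,I_{n-2} \end{bmatrix},
\]
with explicit vectors $\mathbf{u},\mathbf{v}\in\R^{3}$ and an explicit $3\times 3$ matrix $A$ whose third row equals $(0,0,\lambda)$.

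To evaluate $\det M$, the Schur complement gives $\det M = \lambda^{n-2}\det\!\bigl(A - \tfrac{n-2}{\lambda}\,\mathbf{u}\mathbf{v}^{T}\bigr)$. Since $\mathbf{u}\mathbf{v}^{T}$ has nonzero entries only in its second column, the third row of $A - \tfrac{n-2}{\lambda}\mathbf{u}\mathbf{v}^{T}$ remains $(0,0,\lambda)$, so expanding along it reduces the computation to a $2\times 2$ determinant in which all $\lambda^{-1}$ terms cancel cleanly, leaving $\det\bigl(A-\tfrac{n-2}{\lambda}\mathbf{u}\mathbf{v}^{T}\bigr) = \lambda\,m\,C_0\,(C_0-2C_1)$. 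Therefore $\det M = m\,C_0\,(C_0-2C_1)\,\lambda^{n-1}$, and multiplying by $(C_0-2C_1)^{m-n-1}$ produces the claimed formula.

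The main obstacle is the bookkeeping in the column-by-column computation of $M$: handling $\cA\cH_{1,2}$ correctly requires careful re-expansion of the emerging $e_{1,2}$, and any sign slip would propagate into $A$ and spoil the final cancellation. Once the block form of $M$ is identified, the Schur-complement step is routine, and the cancellation of the $\lambda^{-1}$ terms in the final $2\times 2$ determinant serves as a convenient consistency check.
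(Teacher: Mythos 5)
Your proposal is correct and follows essentially the same route as the paper: it uses the same splitting of $\R^m$ into $\Span(\cB)$ and $\cB^\perp$ (with $\cA$ acting as $(C_0-2C_1)I$ on the latter), the same identities \eqref{eq: Hi+Hj - Hij}, \eqref{eq: HHi}, \eqref{eq: 2 one} to compute $\cA$ on the vectors of $\cB$, and the same re-expansion of the stray $e_{1,2}$ term, reducing everything to an $(n+1)\times(n+1)$ determinant. The only (minor) difference is that you evaluate that determinant via a Schur complement against the $\lbrb{C_0+(n-4)C_1}I_{n-2}$ block instead of the paper's cofactor expansion and column operations, and I have checked that your block structure and the resulting $2\times 2$ computation do give $mC_0(C_0-2C_1)\lbrb{C_0+(n-4)C_1}^{n-1}$ for $\det\cA_{|\cB}$, hence the claimed formula.
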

\begin{proof}
Recall that equation \eqref{eq: A Matrix} provided that
\[\mathcal{A} = 
C_0\lbrb{ I +
\begin{bNiceArray}{c|w{c}{0.5em}|c}[margin]
  0 & {\mathbf{1}} &{\mathbf{1}}  \\ 
  \hline
  {\mathbf{0}} &
  {\one} &{\one}  \\
  \hline
  {\mathbf{0}} &
  {\one} &{\one} 
\end{bNiceArray}
}
+ C_1\lbrb{\cH
-
\begin{bNiceArray}{c|w{c}{0.5em}|c}[margin]
  0 & {\boldsymbol{0}} &{\mathbf{0}}  \\ 
  \hline
  {\one} &
  {\one} &{\one}  \\
  \hline
  {\mathbf{0}} &
  {\boldsymbol{0}} &{\mathbf{0}} 
\end{bNiceArray}
- 
\begin{bNiceArray}{c|w{c}{0.5em}|c}[margin]
  0 & {\one} &{\mathbf{0}}  \\ 
  \hline
  {\boldsymbol{0}} &
  {\one} &{\boldsymbol{0}}  \\
  \hline
  {\mathbf{0}} &
  {\one} &{\mathbf{0}} 
\end{bNiceArray}
}
% =
% C_0\lbrb{ I +
% \begin{bNiceArray}{c|w{c}{0.5em}|c}[margin]
%   0 & {\mathbf{0}} &{\mathbf{0}}  \\ 
%   \hline
%   {\mathbf{0}} &
%   {\one} &{\one}  \\
%   \hline
%   {\mathbf{0}} &
%   {\one} &{\one} 
% \end{bNiceArray}
% }
% + C_1\lbrb{H - 
% \begin{bNiceArray}{c|w{c}{0.5em}|c}[margin]
%   0 & {\one} &{\mathbf{0}}  \\ 
%   \hline
%   {\one} &
%   {\boldsymbol{2}} &{\one}  \\
%   \hline
%   {\mathbf{0}} &
%   {\one} &{\mathbf{0}} 
% \end{bNiceArray}
% }
.
\]
where the matrix and its blocks are symmetric and
\begin{itemize}
    \item The first line describes row $(1,2)$;
    \item The second line describes rows $(1,2),(1,3), \dots, (1,n),(2,3), (2,4), \dots, (2,n)$, that
    is the positions where $\cH_{1,2}$ is non-zero;
    \item The last line is for the remaining indices.
\end{itemize}
We have seen in the proof of Theorem \ref{thm: general} that
the set
\[
\mathcal{B} := 
\lbcurlyrbcurly{\one, \cH_{1,2}, H_2, H_3, \dots, H_n},
\]
consists of $(n+1)$ linearly independent vectors, and its orthogonal complement consists of eigenvectors of $\cH$ with eigenvalue
$C_0 - 2C_1$, and has dimension $m-(n+1)$. Therefore,
\begin{equation}
    \label{eq: detA AB}
    \det \cA = (C_0-2C_1)^{m-(n+1)} \det \cA_{|\cB},
\end{equation}
where
$A_{|\cB}$ is the matrix of the restriction of $\cA$
to the linear span of $\mathcal{B}$ in the basis $\cB$. 
Recall that  from previous calculations that
\[
\cH_{i,j} \eqinfo{\eqref{eq: Hi+Hj - Hij}}
H_i+H_j-2e_{i,j}, 
\quad
\cH H_i \eqinfo{\eqref{eq: HHi}}
2 \one + (n-4)H_i,
\quad
\text{and}
\quad
\sum_{i=1}^nH_i \eqinfo{\eqref{eq: 2 one}} 2\cdot\one.
\] 
This helps us to calculate that
\[
\cH \cH_{1,2} = \cH (H_1 + H_2 - 2e_{1,2}) = (n-6)\cH_{1,2} + 4 \boldsymbol{1} + (2n-8)e_{1,2},
\]
and further that
\begin{equation}
\label{eq: calc A}
\begin{split}
    \mathcal{A}\boldsymbol{1} &= C_0 m \boldsymbol{1} - C_1 m \cH_{1,2}
    \\
     \mathcal{A}{\cH_{1,2}}
     &= 
     \big( (2n - 4) C_0 - (2n - 8) C_1 \big) \one + \big( C_0 - (n + 2) C_1 \big) \cH_{1,2} + C_1(2n-8)e_{1,2}\\
     &= \big( (2n - 4) C_0 - (2n - 8) C_1 \big) \one + \big( C_0 - (n + 2) C_1 \big) \cH_{1,2} \\
     &\hspace{15em}+ C_1(2n-8)\frac12\lbrb{2 \boldsymbol{1} - \cH_{1,2}-\sum_{i>2}H_i}\\
     &= 
     (2n-4)C_0\one + (C_0 -(2n-2)C_1)\cH_{1,2}
     -C_1(n-4)\sum_{i>2}H_i\\
     \cA H_2&= (C_0 (n-2) - C_1(n-4))\one-C_1(n-1)\cH_{1,2} + (C_0+(n-4)C_1)H_2\\
     \text{and, for $i>2$, }\quad
     \cA H_i&= C_0 (n-1)\one-C_1(n-1)\cH_{1,2} + (C_0+(n-4)C_1)H_i.
\end{split}
\end{equation}
Therefore, the matrix of $\cA_{|\cB}$ is 
\[
\scalebox{0.8}{% adjust scale factor as needed
  $\begin{bNiceMatrix}[first-col, margin]
  \one    & C_0m                & C_0(2n-4)            & C_0(n-2) - C_1(n-4)   & C_0(n-1)         & \dots & C_0(n-1) \\
  \cH_{1,2} & -C_1m              & C_0 - (2n-2)C_1     & -C_1(n-1)             & -C_1(n-1)        & \dots & -C_1(n-1)\\
  H_2    & 0                  & 0                    & C_0 + C_1(n-4)        & 0                & \dots & 0 \\
  H_3    & 0                  & -C_1(n-4)            & 0                     & C_0 + C_1(n-4)   & \dots & 0 \\
  \vdots & \vdots             & \vdots               & \vdots                & \vdots           & \ddots & \vdots\\
  H_n    & 0                  & -C_1(n-4)            & 0                     & 0                & \dots & C_0 + C_1(n-4)
  \end{bNiceMatrix}$
}
\]
To calculate its determinant, first use the cofactor formula with respect to the third line, to get
\[
\det \cA_{|\cB} = 
 \lbrb{C_0 + C_1(n-4)} \scalebox{0.8}{ $\begin{vNiceMatrix}

   C_0m                & C_0(2n-4)             & C_0(n-1)         & \dots & C_0(n-1) \\
   -C_1m              & C_0 - (2n-2)C_1               & -C_1(n-1)        & \dots & -C_1(n-1)\\
  0                  & -C_1(n-4)                           & C_0 + C_1(n-4)   & \dots & 0 \\
   \vdots             & \vdots                         & \vdots           & \ddots & \vdots\\
   0                  & -C_1(n-4)                         & 0                & \dots & C_0 + C_1(n-4)
  \end{vNiceMatrix}$},
\]
and then since $m = \binom{n}{2}$, multiplying the first column by
$-2/n$ and adding it to columns $3,4,\dots$, we obtain
\begin{equation}\label{eq: calc det Ab}
\begin{split}
\det(\cA_{|\cB}) 
&= \lbrb{C_0 + C_1(n-4)} 
   \scalebox{0.8}{$
     \begin{vNiceMatrix}
       C_0m    & C_0(2n-4)       & 0                  & \dots & 0 \\
       -C_1m   & C_0 - (2n-2)C_1 & 0                  & \dots & 0 \\
       0       & -C_1(n-4)       & C_0 + C_1(n-4)     & \dots & 0 \\
       \vdots  & \vdots          & \vdots            & \ddots & \vdots \\
       0       & -C_1(n-4)       & 0                  & \dots & C_0 + C_1(n-4)
     \end{vNiceMatrix}$
   } \\
&= \lbrb{C_0 + C_1(n-4)}^{n-1}
   \scalebox{0.8}{$
     \begin{vNiceMatrix}
       C_0m    & C_0(2n-4) \\
       -C_1m   & C_0 - (2n-2)C_1 
     \end{vNiceMatrix}$
   }\\
   &=mC_0(C_0-2C_1)\lbrb{C_0 + C_1(n-4)}^{n-1} .
\end{split}
\end{equation}
Substituting in \eqref{eq: detA AB}, we obtain the claimed
\[
\det \cA = mC_0\lbrb{C_0-2C_1}^{m-n}\lbrb{C_0 + C_1(n-4)}^{n-1} .
\]
\end{proof}
     \begin{proposition}
         \label{prop: det cA^ast}
The determinant of the matrix $\cA^\ast$, defined in \eqref{eq: A Matrix_general} is equal to
\[
\det \cA^\ast = m^\ell|\boldsymbol{C}_{\!0}|\labsrabs{\boldsymbol{C}_{\!0}-2\boldsymbol{C}_{\!1}}^{m-n}\labsrabs{\boldsymbol{C}_{\!0} + (n-4)\boldsymbol{C}_{\!1}}^{n-1} .
\]
     \end{proposition}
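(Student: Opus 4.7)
The plan is to mirror the proof of Proposition~\ref{prop: det cA}, exploiting the tensor product structure $\cA^\ast = \boldsymbol{C}_{\!0}\otimes A_1 + \boldsymbol{C}_{\!1}\otimes A_2$ from \eqref{eq: A Matrix_general}. The decomposition $\Qb^m = \cB \oplus \cB^\perp$ used in the scalar case lifts to
\[
\Qb^\ell \otimes \Qb^m = \lbrb{\Qb^\ell \otimes \cB} \oplus \lbrb{\Qb^\ell \otimes \cB^\perp},
\]
and since both $A_1$ and $A_2$ separately preserve each of $\cB$ and $\cB^\perp$ -- a fact already implicit in the scalar proof, where $\cA = C_0 A_1 + C_1 A_2$ acts on $\cB^\perp$ as multiplication by $(C_0-2C_1)$ for \emph{arbitrary} scalars $C_0,C_1$, forcing $A_1|_{\cB^\perp}=I$ and $A_2|_{\cB^\perp}=-2I$ -- the operator $\cA^\ast$ respects the lifted decomposition. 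Hence $\det \cA^\ast$ splits as a product of two sub-determinants, which I will compute separately.

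On $\Qb^\ell \otimes \cB^\perp$, of dimension $\ell(m-n-1)$, the operator $\cA^\ast$ acts as $(\boldsymbol{C}_{\!0}-2\boldsymbol{C}_{\!1})\otimes I_{m-n-1}$, contributing a factor $\labsrabs{\boldsymbol{C}_{\!0}-2\boldsymbol{C}_{\!1}}^{m-n-1}$. The main work is on $\Qb^\ell \otimes \cB$. In the basis $\cB = \{\one, \cH_{1,2}, H_2, H_3, \dots, H_n\}$ tensored with the standard basis of $\Qb^\ell$, the restriction $(\cA^\ast)_{|\Qb^\ell \otimes \cB}$ is an $(n+1)\times(n+1)$ block matrix whose $(i,j)$ block equals $a^{(1)}_{ij}\boldsymbol{C}_{\!0}+a^{(2)}_{ij}\boldsymbol{C}_{\!1}$, with scalar coefficients $a^{(k)}_{ij}$ read off directly from the action computations in \eqref{eq: calc A}.

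I will then mimic the scalar reduction from \eqref{eq: calc det Ab} in two stages: (i) the $H_2$-row has a single non-zero block $D:=\boldsymbol{C}_{\!0}+(n-4)\boldsymbol{C}_{\!1}$ at its diagonal position, so matrix-valued block row operations on rows $\one$ and $\cH_{1,2}$ -- permissible because they affect only the $H_2$-column -- zero the entries above $D$ in that column, and a symmetric block permutation pulls out a factor $\labsrabs{D}$, leaving an $n\times n$ block matrix; (ii) adding the \emph{scalar} multiple $-(n-1)/m$ of the $\one$-column to each $H_i$-column for $i\geq 3$ zeros both the $(n-1)\boldsymbol{C}_{\!0}$ entry in the $\one$-row and the $-(n-1)\boldsymbol{C}_{\!1}$ entry in the $\cH_{1,2}$-row, as the scalar coefficients cancel exactly as in the scalar proof. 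The resulting matrix is block lower triangular with a $2\times 2$ upper-left block
\[
P := \begin{pmatrix} m\boldsymbol{C}_{\!0} & (2n-4)\boldsymbol{C}_{\!0} \\ -m\boldsymbol{C}_{\!1} & \boldsymbol{C}_{\!0} - (2n-2)\boldsymbol{C}_{\!1} \end{pmatrix}
\]
and a block-diagonal lower-right tail of $(n-2)$ copies of $D$, contributing $\labsrabs{D}^{n-2}$.

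The main subtlety will be the computation of $\det P$ without commutativity between $\boldsymbol{C}_{\!0}$ and $\boldsymbol{C}_{\!1}$. I plan to apply the Schur complement with pivot $m\boldsymbol{C}_{\!0}$; the decisive observation is that the top-right block $(2n-4)\boldsymbol{C}_{\!0}$ is itself proportional to the pivot, so $(m\boldsymbol{C}_{\!0})^{-1}\lbrb{(2n-4)\boldsymbol{C}_{\!0}} = \tfrac{2n-4}{m}\,I$ is a scalar multiple of the identity. The Schur complement therefore collapses to $\boldsymbol{C}_{\!0}-(2n-2)\boldsymbol{C}_{\!1}+(2n-4)\boldsymbol{C}_{\!1} = \boldsymbol{C}_{\!0}-2\boldsymbol{C}_{\!1}$ with no commutator corrections, yielding $\det P = m^\ell\labsrabs{\boldsymbol{C}_{\!0}}\cdot\labsrabs{\boldsymbol{C}_{\!0}-2\boldsymbol{C}_{\!1}}$. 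Assembling all the factors then gives
\[
\det \cA^\ast = \labsrabs{\boldsymbol{C}_{\!0}-2\boldsymbol{C}_{\!1}}^{m-n-1}\cdot\labsrabs{D}\cdot m^\ell\labsrabs{\boldsymbol{C}_{\!0}}\cdot\labsrabs{\boldsymbol{C}_{\!0}-2\boldsymbol{C}_{\!1}}\cdot\labsrabs{D}^{n-2} = m^\ell\labsrabs{\boldsymbol{C}_{\!0}}\cdot\labsrabs{\boldsymbol{C}_{\!0}-2\boldsymbol{C}_{\!1}}^{m-n}\cdot\labsrabs{\boldsymbol{C}_{\!0}+(n-4)\boldsymbol{C}_{\!1}}^{n-1},
\]
as claimed. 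Invertibility of $\boldsymbol{C}_{\!0}$ and $D$ needed for the Schur complement manipulations may be assumed by a density argument, since the identity is polynomial in the entries of $\boldsymbol{C}_{\!0},\boldsymbol{C}_{\!1}$.
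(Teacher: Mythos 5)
Your proposal is correct and follows essentially the same route as the paper's proof: the same splitting of $\Qb^\ell\otimes\Qb^m$ into $\Qb^\ell\otimes\Span\cB$ and $\Qb^\ell\otimes\cB^\perp$ (on which $\cA^\ast$ acts as $(\boldsymbol{C}_{\!0}-2\boldsymbol{C}_{\!1})\otimes I$), followed by the block analogue of the scalar reduction \eqref{eq: calc det Ab}. The only minor difference is at the final $2\times 2$ block: the paper factors $m^\ell$ out of the first block column and then subtracts $(2n-4)$ times it from the second, a purely scalar column operation that yields $\det P = m^\ell\labsrabs{\boldsymbol{C}_{\!0}}\labsrabs{\boldsymbol{C}_{\!0}-2\boldsymbol{C}_{\!1}}$ directly, so it never needs the Schur complement, the inverse $D^{-1}$ in the row operations, or your closing density argument for invertibility.
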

     \begin{proof}
Recall that from \eqref{eq: A Matrix_general}
\[\mathcal{A^\ast} = 
\boldsymbol{C}_{\!0}\otimes\lbrb{ I +
\begin{bNiceArray}{c|w{c}{0.5em}|c}[margin]
  0 & {\one} &{\one}  \\ 
  \hline
  {\mathbf{0}} &
  {\one} &{\one}  \\
  \hline
  {\mathbf{0}} &
  {\one} &{\one} 
\end{bNiceArray}
}
+ \boldsymbol{C}_{\!1}\otimes\lbrb{\cH
-
\begin{bNiceArray}{c|w{c}{0.5em}|c}[margin]
  0 & {\boldsymbol{0}} &{\mathbf{0}}  \\ 
  \hline
  {\one} &
  {\one} &{\one}  \\
  \hline
  {\mathbf{0}} &
  {\boldsymbol{0}} &{\mathbf{0}} 
\end{bNiceArray}
- 
\begin{bNiceArray}{c|w{c}{0.5em}|c}[margin]
  0 & {\one} &{\mathbf{0}}  \\ 
  \hline
  {\boldsymbol{0}} &
  {\one} &{\boldsymbol{0}}  \\
  \hline
  {\mathbf{0}} &
  {\one} &{\mathbf{0}} 
\end{bNiceArray}
}
=:\boldsymbol{C}_{\!0}\otimes A_1 +
\boldsymbol{C}_{\!1}\otimes A_2
.
\]
     \end{proof}
     Next, take a basis of reference $\cB_R$ of $\R^\ell$. Then, as in the previous proposition
the set
\[
\mathcal{B} := 
\lbcurlyrbcurly{\one, \cH_{1,2}, H_2, H_3, \dots, H_n},
\]
consists of $(n+1)$ linearly independent vectors, and its orthogonal complement consists of common eigenvectors of $A_1$ and $A_2$ with eigenvalues, respectively,
$C_0$ and $ 2C_1$, and has dimension $m-(n+1)$. Therefore,
\[
    \det \cA^\ast = |\boldsymbol{C}_{\!0}-2\boldsymbol{C}_{\!1}|^{m-(n+1)} \cdot \det \cA^\ast_{|\cB_R\otimes\cB},
\]
where
$\cA^\ast_{|\cB_R\otimes\cB}$ is the matrix of the restriction of $\cA^\ast$
to the linear span of $\Bc_R\otimes\mathcal{B}$ in the basis $\Bc_R\otimes\mathcal{B}$. Furthermore, the analogue of \eqref{eq: calc A} is the following: for a vector $v \in \Bc_R$,
\[ \cA^\ast \lbrb{v\otimes \one}
=\boldsymbol{C}_{\! 0}v\otimes(m\one) + \boldsymbol{C}_{\! 1}v\otimes\lbrb{-2m\Hc_{1,2}}, \quad
\text{etc...}
\]
showing that, the structure from the univariate case is preserved, with elements becoming blocks
\[
\scalebox{0.8}{% adjust scale factor as needed
  $\begin{bNiceMatrix}[first-col, margin]
  \cB_R\otimes\one    & \boldsymbol{C}_{\! 0}m                & \boldsymbol{C}_{\! 0}(2n-4)            & \boldsymbol{C}_{\! 0}(n-2) - \boldsymbol{C}_{\! 1}(n-4)   & \boldsymbol{C}_{\! 0}(n-1)         & \dots & \boldsymbol{C}_{\! 0}(n-1) \\
  \cB_R\otimes\cH_{1,2} & -\boldsymbol{C}_{\! 1}m              & \boldsymbol{C}_{\! 0} - (2n-2)\boldsymbol{C}_{\! 1}     & -\boldsymbol{C}_{\! 1}(n-1)             & -\boldsymbol{C}_{\! 1}(n-1)        & \dots & -\boldsymbol{C}_{\! 1}(n-1)\\
 \cB_R\otimes H_2    & 0                  & 0                    & \boldsymbol{C}_{\! 0} + \boldsymbol{C}_{\! 1}(n-4)        & 0                & \dots & 0 \\
\cB_R\otimes  H_3    & 0                  & -\boldsymbol{C}_{\! 1}(n-4)            & 0                     & \boldsymbol{C}_{\! 0} + \boldsymbol{C}_{\! 1}(n-4)   & \dots & 0 \\
  \vdots & \vdots             & \vdots               & \vdots                & \vdots           & \ddots & \vdots\\
\cB_R\otimes  H_n    & 0                  & -\boldsymbol{C}_{\! 1}(n-4)            & 0                     & 0                & \dots & \boldsymbol{C}_{\! 0} + \boldsymbol{C}_{\! 1}(n-4)
  \end{bNiceMatrix}$
}.
\]
The logic of \eqref{eq: calc det Ab}, so
\begin{equation*}
\begin{split}
\det(\cA^\ast_{|\cB}) 
&= \labsrabs{\boldsymbol{C}_{\! 0} + \boldsymbol{C}_{\! 1}(n-4)}^{n-1}
   \scalebox{0.8}{$
     \begin{vNiceMatrix}
       \boldsymbol{C}_{\! 0}m    & \boldsymbol{C}_{\! 0}(2n-4) \\
       -\boldsymbol{C}_{\! 1}m   & \boldsymbol{C}_{\! 0} - (2n-2)\boldsymbol{C}_{\! 1} 
     \end{vNiceMatrix}$
   }\\
   &=\labsrabs{\boldsymbol{C}_{\! 0} + \boldsymbol{C}_{\! 1}(n-4)}^{n-1}m^\ell
    \scalebox{0.8}{$
     \begin{vNiceMatrix}
       \boldsymbol{C}_{\! 0}    & \boldsymbol{C}_{\! 0}(2n-4) \\
       -\boldsymbol{C}_{\! 1}   & \boldsymbol{C}_{\! 0} - (2n-2)\boldsymbol{C}_{\! 1} 
     \end{vNiceMatrix}$ }
     \\
   &=\labsrabs{\boldsymbol{C}_{\! 0} + \boldsymbol{C}_{\! 1}(n-4)}^{n-1}m^\ell
    \scalebox{0.8}{$
     \begin{vNiceMatrix}
       \boldsymbol{C}_{\! 0}    & \boldsymbol{0} \\
       -\boldsymbol{C}_{\! 1}   & \boldsymbol{C}_{\! 0} - 2\boldsymbol{C}_{\! 1} 
     \end{vNiceMatrix}$ } =
     \labsrabs{\boldsymbol{C}_{\! 0} + \boldsymbol{C}_{\! 1}(n-4)}^{n-1}m^\ell
    \labsrabs{
       \boldsymbol{C}_{\! 0}}\cdot\labsrabs{   \boldsymbol{C}_{\! 0} - 2\boldsymbol{C}_{\! 1}} ,
\end{split}
\end{equation*}
as claimed.

    \begin{proposition}
    \label{prop: pachi krak}
Let $A$ be a $n \times n$ matrix with diagonal terms equal to $a$ and off-diagonal terms equal to b. Then
$\det A = (a + (n-1)b)(a-b)^{n-1}$. 

\end{proposition}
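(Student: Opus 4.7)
The plan is to recognise $A$ as a rank-one perturbation of a multiple of the identity, namely
\[
A = (a-b)\, I_n + b\, J_n,
\]
where $J_n$ denotes the $n \times n$ all-ones matrix. This reduction is the heart of the argument and makes the spectral structure immediate.

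Next, I would exploit the elementary spectral decomposition of $J_n$: it has rank one, with eigenvalue $n$ along the vector $\one = (1,\dots,1)^t$ and eigenvalue $0$ on the $(n-1)$-dimensional hyperplane $\one^{\perp}$. Consequently $A$ is simultaneously diagonalised in the same orthogonal basis, with eigenvalues
\[
(a-b) + b\cdot n = a + (n-1)b \quad \text{(along $\one$)}, \qquad (a-b) + b \cdot 0 = a-b \quad \text{(on $\one^\perp$, with multiplicity $n-1$)}.
\]
Since the determinant is the product of the eigenvalues counted with multiplicity, the claimed identity $\det A = (a + (n-1)b)(a-b)^{n-1}$ follows at once.

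As an alternative self-contained route, one can proceed by elementary row operations without invoking the spectrum: add rows $2,\dots,n$ to the first row so that it becomes $(a+(n-1)b)\,\one^t$, factor out $a+(n-1)b$, and then subtract the resulting first row from each of the remaining rows. The matrix becomes upper triangular with a single $1$ followed by $n-1$ diagonal entries equal to $a-b$, yielding the same formula. There is no genuine obstacle here; the only choice is whether to present the spectral argument (which is the cleaner viewpoint and also matches the paper's running use of eigenspaces of related matrices such as $\cH$) or the row-reduction argument.
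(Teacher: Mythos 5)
Your proposal is correct and matches the paper's own argument: the paper likewise writes $A=(a-b)I_n+bJ_n$ and reads off the eigenvalues $a+(n-1)b$ (multiplicity $1$) and $a-b$ (multiplicity $n-1$) from the spectrum of $J_n$, then multiplies them. Your spectral bookkeeping (eigenvalue $n$ of $J_n$ along $\one$, eigenvalue $0$ on $\one^{\perp}$) is in fact stated more carefully than in the paper, and the row-reduction variant is a fine but unnecessary addition.
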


\begin{proof}
    This can be obtained by standard linear algebra
    for manipulating determinants.

    However, it is also possible to 
    argue as follows:
    if  $J_n$ is the matrix only with 1s and $I_n$ is the identity one,
    then
    $A = (a-b)I_n + bJ_n$.
    Since $J_n$ has eigenvalues
    $1$ and $0$ with
    multiplicities,
    respectively,
    $n-1$ and 1, we get that the eigenvalues of
    $A$ are $a + (n-1)b$ and 
    $a-b$ of the same multiplicities, which gives the desired result.
    
\end{proof}
The ideas of the next result are discussed, for example, in 
\href{https://math.stackexchange.com/questions/4178709/adding-finite-list-of-square-roots-of-primes-to-mathbbq}{this Math Stack Exchange post}\footnote{https://math.stackexchange.com/questions/4178709/adding-finite-list-of-square-roots-of-primes-to-mathbbq}.

     \begin{proposition}
         \label{prop: rational_independence}
         \begin{enumerate}
             \item For any distinct prime numbers $p_1,p_2,\dots,p_n$, if $p$ is square-free and co-prime with $p_1\dots p_n$, then $\sqrt{p}\not \in K_n=\mathbb{Q}[\sqrt{p}_1,\dots,\sqrt{p}_n]$. In particular, $|K_n:\Qb| = 2^n$
             and the numbers $\prod_{i\in I\subset\{1,\dots,n\}}\sqrt{p_i}$ are linearly independent over $\mathbb{Q}$.
             \item           If $t_i> 1$ are distinct square-free positive integers, then the only integers $s'_i$ with the property
$
\sum_{i=1}^{\ell} s_i' \sqrt{t_i}\in \mathbb{Q} \text{ are } s_i'=0 \text{ for all } i\le \ell.
$
         \end{enumerate}
\end{proposition}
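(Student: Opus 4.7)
The plan is to establish Part 1 by a single induction on $n$ in which $p$ is also allowed to vary, and then to deduce Part 2 by reducing the problem to the square-free case of Part 1. I shall take the inductive hypothesis at level $n$ to be the following statement: \emph{for every square-free integer $p>1$ coprime to $p_{1}\cdots p_{n}$, one has $\sqrt{p}\notin K_{n}$}. The base $n=0$ is the classical irrationality of $\sqrt{p}$.

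For the inductive step, I first apply the hypothesis at level $n$ with $p=p_{n+1}$ to get $\sqrt{p_{n+1}}\notin K_{n}$, whence $[K_{n+1}:K_{n}]=2$ with $K_{n+1}=K_{n}\oplus K_{n}\sqrt{p_{n+1}}$. Suppose $\sqrt{p}\in K_{n+1}$ and write $\sqrt{p}=a+b\sqrt{p_{n+1}}$ with $a,b\in K_{n}$. Squaring gives
\[
p-a^{2}-b^{2}p_{n+1}=2ab\sqrt{p_{n+1}}\in K_{n},
\]
so $ab=0$. If $b=0$ then $\sqrt{p}=a\in K_{n}$, contradicting the hypothesis at level $n$. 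If $a=0$ then multiplying $b\sqrt{p_{n+1}}=\sqrt{p}$ by $\sqrt{p_{n+1}}$ yields $\sqrt{p\,p_{n+1}}=b\,p_{n+1}\in K_{n}$; since $p$ and $p_{n+1}$ are coprime and square-free, $p\,p_{n+1}$ is itself a square-free integer greater than $1$ coprime to $p_{1}\cdots p_{n}$, again contradicting the hypothesis at level $n$. This completes the induction. The tower law then gives $[K_{n}:\mathbb{Q}]=2^{n}$, and since the $2^{n}$ products $\prod_{i\in I}\sqrt{p_{i}}$, $I\subseteq\{1,\dots,n\}$, span $K_{n}$ over $\mathbb{Q}$, they must form a $\mathbb{Q}$-basis by dimension count.

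For Part 2, let $q_{1},\dots,q_{N}$ be the (distinct) primes appearing in any of the factorisations of $t_{1},\dots,t_{\ell}$. Since each $t_{i}>1$ is square-free, it is encoded by a unique non-empty subset $J_{i}\subseteq\{1,\dots,N\}$ via $t_{i}=\prod_{j\in J_{i}}q_{j}$, and the $J_{i}$ are pairwise distinct because the $t_{i}$ are. Consequently $\sqrt{t_{i}}=\prod_{j\in J_{i}}\sqrt{q_{j}}$ is exactly one of the nonunit basis elements of $K_{N}=\mathbb{Q}[\sqrt{q_{1}},\dots,\sqrt{q_{N}}]$ provided by Part 1, and these basis elements are pairwise distinct. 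A relation $\sum_{i=1}^{\ell}s'_{i}\sqrt{t_{i}}=r\in\mathbb{Q}$ rewrites as a $\mathbb{Q}$-linear dependence among the basis elements $\{1,\sqrt{t_{1}},\dots,\sqrt{t_{\ell}}\}$ of $K_{N}$, forcing $r=0$ and each $s'_{i}=0$.

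The only real subtlety — and what I expect to be the main obstacle — lies in choosing the inductive hypothesis in Part 1 with enough uniformity in $p$. In particular, the case $a=0$ succeeds only because we can apply the hypothesis at level $n$ not just to the original $p$ but to the \emph{new} square-free integer $p\,p_{n+1}$; fixing $p$ from the outset would make the induction fail at precisely this branch. Once this is set up correctly, everything else is a clean descent through the tower of quadratic extensions.
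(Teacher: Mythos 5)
Your proof is correct and follows essentially the same route as the paper: induction on $n$ with the hypothesis kept uniform in the square-free integer $p$, reduction to $ab=0$ after writing $\sqrt{p}=a+b\sqrt{p_{n+1}}$, and the decisive application of the hypothesis to the new square-free integer $p\,p_{n+1}$, with Part 2 deduced by expressing each $\sqrt{t_i}$ as a distinct product of $\sqrt{q_j}$'s. The only cosmetic difference is that you obtain $ab=0$ by squaring directly, whereas the paper invokes the nontrivial automorphism of $K_{n+1}/K_n$; the substance is identical.
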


\begin{proof}
    The proof of the first claim proceeds by induction on $n$: assume that $p_n$ is prime and is distinct from $p_1,\dots,p_{n-1}$. Then, in particular, $p_n$ is co-prime with $p_1,\dots,p_{n-1}$, and consequently by the inductive hypothesis $p_n\not \in K_{n-1}$, which implies that $|K_{n-1}[\sqrt{p_n}]:K_{n-1}|=2$. Consider the only non-trivial automorphism $\sigma\in Gal(K_n/K_{n-1})$. Then $\sigma(\sqrt{p_n})=-\sqrt{p_n}$. Now consider an arbitrary number $p$ that is square-free and coprime with $p_1,\dots,p_{n-1},p_n$. Thus $\sqrt{p}\not \in K_{n-1}$. For the sake of contradiction, assume that $\sqrt{p}\in K_n$. Thus $\sqrt{p}=a+b\sqrt{p_n}$ with $a\in K_{n-1}$ and $b\neq 0$. It follows that $\sigma(\sqrt{p})=a-b\sqrt{p_n}$ and consequently $(a+b\sqrt{p_n})^2=p=\sigma(\sqrt{p}^2)=\sigma(\sqrt{p})^2=a^2 -2ab\sqrt{p_n} +p_n$. It follows that $ab=0$. Since $b\neq 0$, $a=0$ and thus $\sqrt{pp_n}\in K_{n-1}$. Since $\gcd(p,p_n)=1$, $pp_n$ is square-free and further $pp_n$ is co-prime with $p_1,\dots,p_{n-1}$, this is a contradiction with the inductive hypothesis, applied to $pp_{n}$.

    Therefore, $|K_n : \mathbb{Q}|=2^n$, that is the dimension of $K_n$ as a linear space over $\mathbb{Q}$. On the other hand every element of $K_n$ is a rational combination of $\prod_{i\in I}\sqrt{p_i}$ for $I\subseteq [n]$. This shows that the terms $\prod_{i\in I}\sqrt{p_i}$, being $2^n$ in total, must be linearly independent over $\mathbb{Q}$.
In our case, taking $p_1,\dots,p_n$ to be the distinct prime factors of $t_i$, we see that $t_i$ has the above form and thus are linearly independent over $\mathbb{Q}$.
\end{proof}

\section*{Acknowledgments}
This study is financed by the European Union's NextGenerationEU, through the National Recovery and Resilience Plan of the Republic of Bulgaria, project No BG-RRP-2.004-0008. The second author thanks Ivailo Hartarsky for his useful remarks and overall interest.
\addcontentsline{toc}{section}{References}

\bibliographystyle{plainnat}

\bibliography{bibliography.bib}

\end{document}